\newtheorem{theorem}{Theorem}[section]
\newtheorem{corollary}[theorem]{Corollary}
\newtheorem{assumption}[theorem]{Assumption}
\newtheorem{conj}[theorem]{Conjecture}
\newtheorem{lemma}[theorem]{Lemma}
\newtheorem{proposition}[theorem]{Proposition}
\newtheorem{remark}[theorem]{Remark}
\newtheorem{notation}[theorem]{Notation}
\theoremstyle{definition}
\newtheorem{definition}[theorem]{Definition}
\definecolor{color1}{rgb}{0,0, 0.0}
\definecolor{color8}{rgb}{0,0, 0.0}
\definecolor{color2}{rgb}{0.55, 0.55, 0.55}
\definecolor{color4}{rgb}{0.45, 0.66, 0.76}
\tikzstyle{vertex}=[circle, draw=black, fill=black, minimum size=2pt, inner sep=2]
\tikzset{
  on each segment/.style={
    decorate,
    decoration={
      show path construction,
      moveto code={},
      lineto code={
        \path [#1]
        (\tikzinputsegmentfirst) -- (\tikzinputsegmentlast);
      },
      curveto code={
        \path [#1] (\tikzinputsegmentfirst)
        .. controls
        (\tikzinputsegmentsupporta) and (\tikzinputsegmentsupportb)
        ..
        (\tikzinputsegmentlast);
      },
      closepath code={
        \path [#1]
        (\tikzinputsegmentfirst) -- (\tikzinputsegmentlast);
      },
    },
  },
  mid arrow/.style={postaction={decorate,decoration={
        markings,
        mark=at position .5 with {\arrow[scale=1.2,#1]{stealth}}
      }}},
}
\tikzset{
    set arrow inside/.code={\pgfqkeys{/tikz/arrow inside}{#1}},
    set arrow inside={end/.initial=>, opt/.initial=},
    /pgf/decoration/Mark/.style={
        mark/.expanded=at position #1 with
        {
            \noexpand\arrow[\pgfkeysvalueof{/tikz/arrow inside/opt}]{\pgfkeysvalueof{/tikz/arrow inside/end}}
        }
    },
    arrow inside/.style 2 args={
        set arrow inside={#1},
        postaction={
            decorate,decoration={
                markings,Mark/.list={#2}
            }
        }
    },
}
\let\oldproofname=\proofname
\renewcommand{\proofname}{\rm\bf{\oldproofname}}
\title{Decompositions of the wreath product of certain directed graphs into directed hamiltonian cycles}
\author{A. Lacaze-Masmonteil\footnote{Email: alk004@uregina.ca. Mailing address: Department of Mathematics and Statistics, University of Regina, 3737 Wascana Pkwy, Regina, SK S4S 0A2, Canada}, University of Regina }
\begin{document}
\maketitle \baselineskip 17pt

\begin{center}
{\bf Abstract}
\end{center}

We affirm several special cases of a conjecture that first appears in Alspach et al.~(1987) which stipulates that the wreath (lexicographic) product of two hamiltonian decomposable directed graphs is also hamiltonian decomposable. Specifically, we show that the wreath product of  hamiltonian decomposable directed graph $G$, such that $|V(G)|$ is even and $|V(G)|\geqslant 3$, with a directed $m$-cycle such that $m \geqslant 4$ or the complete symmetric directed graph on $m$ vertices such that $m\geqslant 3$, is hamiltonian decomposable. We also show  the wreath product of a directed $n$-cycle, where $n$ is even, with a directed $m$-cycle, where $m \in \{2,3\}$, is not hamiltonian decomposable.

\smallskip
\noindent {\bf Keywords}: Wreath product, decompositions, hamiltonian cycle, directed graphs.\\
\noindent {\bf Mathematics Subject Classification}: 05C51 and 05C76.
\section{Introduction}\label{S:intro}

In this paper, we determine when the wreath (lexicographic) product of certain hamiltonian decomposable directed graphs (digraphs for short) is also hamiltonian decomposable. The \textit{wreath product} of digraph $G$ with $H$, denoted $G \wr H$, is the digraph on vertex set $V(G) \times V(H)$ such that $((g_1, h_1), (g_2, h_2)) \in A(G \wr H)$ if and only if $(g_1, g_2)\in A(G)$, or $g_1=g_2$ and $(h_1, h_2)\in A(H)$.  If a (directed) graph admits a decomposition into (directed) hamiltonian cycles, then this (directed) graph is \textit{hamiltonian decomposable}.  It is well-known that the complete graph $K_n$ is hamiltonian decomposable for all odd $n$, as shown by Walecki in \cite{Wale}, while the complete symmetric directed graph $K_n^*$ is hamiltonian decomposable if and only if $n \not\in \{4,6\}$ \cite{Berm2, Tilson}. 

The question of determining whether a particular product of two hamiltonian decomposable (directed) graphs is also hamiltonian decomposable has a rich history. The wreath product of two hamiltonian decomposable graphs is hamiltonian decomposable  \cite{HamLex}; likewise for the strong product \cite{Fan, Zhou}.  An analogous statement has also been proved for the Cartesian product \cite{Stong} under mild additional assumptions. Regarding the categorical (tensor) product, Bermond \cite{Berm} has shown that, if $G$ and $H$ are hamiltonian decomposable graphs and at least one of $|V(G)|$ and $|V(H)|$ is odd, then the categorical product of $G$ with $H$ is also hamiltonian decomposable. In \cite{Bala+}, Balakrishnan et al.~show that the categorical product of two complete graphs of order at least three is hamiltonian decomposable. 

There are few results on the analogous problem for digraphs. In \cite{Kea}, Keating shows that the Cartesian product of a directed $n$-cycle with a directed $m$-cycle is hamiltonian decomposable if and only if there exist positive integers $s_1$ and $s_2$ such that $\textrm{gcd}(m, n)=s_1+s_2$ and $\textrm{gcd}(mn, s_1s_2)=1$. Furthermore, Paulraja and Sivasankar \cite{Praja2} show that the categorial product of specific classes of hamiltonian decomposable digraphs is hamiltonian decomposable. Regarding the wreath product of two hamiltonian decomposable digraphs, the following conjecture first appeared in \cite{CD1} in 1987.

\begin{conj} \cite{CD1}
\label{conj:main}
Let $G$ and $H$ be hamiltonian decomposable directed graphs. The wreath product $G \wr H$  is also hamiltonian decomposable. 
\end{conj}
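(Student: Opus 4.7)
The plan is to exploit the natural partition $A(G\wr H)=A_0\sqcup A_1$, where $A_0$ is the set of inter-block arcs (arising from $A(G)$) and $A_1$ is the union of the $m$ intra-block copies of $A(H)$. Write $m=|V(G)|$ and $n=|V(H)|$, and fix hamiltonian decompositions $\{C_1^G,\ldots,C_k^G\}$ of $G$ and $\{C_1^H,\ldots,C_\ell^H\}$ of $H$. Every vertex of $G\wr H$ has in- and out-degree $kn+\ell$, so the goal is to exhibit $kn+\ell$ pairwise arc-disjoint directed hamiltonian cycles.

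\textbf{Stage 1 (inter-block skeleton).} Let $E_n$ denote the digraph on $n$ vertices with no arcs. The subdigraph on $A_0$ is isomorphic to $G\wr E_n$, and its arc set partitions as $\bigsqcup_{i=1}^{k}A(C_i^G\wr E_n)$. One is thus reduced to the sub-lemma that the directed blow-up $C_m\wr E_n$ admits a decomposition into $n$ directed hamiltonian cycles. A natural construction chooses $n$ arc-disjoint ``shift-cycles'' of the form $(0,a_0)\to(1,a_1)\to\cdots\to(m-1,a_{m-1})\to(0,a_0')\to\cdots$, indexed by a suitable family of starter sequences in $\Z_n$; the construction succeeds for all $m\geq 2$ and $n\geq 1$.

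\textbf{Stage 2 (absorbing intra-block arcs).} For each $j\in\{1,\ldots,\ell\}$, the $m$ copies of $C_j^H$ sitting in the blocks form a vertex-disjoint union of $m$ directed $n$-cycles spanning $V(G\wr H)$. To merge these into a single directed hamiltonian cycle of $G\wr H$, I would sacrifice one well-chosen Stage~1 cycle $D_j$, remove one arc from each block's copy of $C_j^H$, remove one matching inter-block arc of $D_j$ per block, and splice the resulting paths back together through the gaps opened. Carried out for each $j$, this yields the remaining $\ell$ directed hamiltonian cycles and uses up precisely the arcs of $A_1$.

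The main obstacle is the global bookkeeping in the Stage~2 splicing: each surgery must be coordinated so that the final $2$-factor is a single cycle rather than a disjoint union, and so that the sacrificed portion of $D_j$ can itself be repaired into a hamiltonian cycle of what remains. This requires a careful alignment between the cyclic orderings of $G$, of $H$, and of the shifts chosen in Stage~1, and is where parity conditions on $m$ and $n$ inevitably enter. The paper's negative result — that the wreath product of a directed $n$-cycle with a directed $m$-cycle fails to be hamiltonian decomposable for $n$ even and $m\in\{2,3\}$ — shows that this splicing cannot always be accomplished, so Conjecture~\ref{conj:main} as originally stated is false in full generality, and any proof of a corrected form must simultaneously identify the admissible pairs $(G,H)$. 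Adapting the sequencing methods used by Keating for the Cartesian product of directed cycles to the wreath setting appears the most promising route.
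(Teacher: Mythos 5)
The statement you were asked about is a \emph{conjecture}, not a theorem: the paper offers no proof of it, and indeed one of the paper's main contributions is to show that it is false as stated. Your closing paragraph correctly senses this, but the body of your proposal contains a concrete false step that is worth isolating. Your Stage~1 sub-lemma asserts that $\vec{C}_m \wr \overline{K}_n$ (the ``inter-block skeleton'' $C_m\wr E_n$) decomposes into $n$ directed hamiltonian cycles for all $m\geq 2$ and $n\geq 1$. This fails already for $n=2$ and $m$ odd: a $2$-factor of $\vec{C}_m\wr \overline{K}_2$ avoiding horizontal arcs is an $m$-tuple of permutations of $\mathds{Z}_2$ and is a hamiltonian cycle exactly when an odd number of its entries equal the transposition $(0\,1)$; two arc-disjoint such $2$-factors must between them use the transposition exactly once at each of the $m$ levels, so the two counts sum to $m$, which is odd, and they cannot both be odd. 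This is precisely Ng's exception $\vec{C}_m\wr\overline{K}_2$ ($m$ odd) cited in the introduction. Since your Stage~2 surgery is built on top of Stage~1 and is itself only described as a hope (``sacrifice a well-chosen cycle \dots splice \dots''), the proposal does not establish any case of the conjecture.

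For comparison, the paper never attempts a uniform two-stage argument of this kind. It first reduces, via Lemma~\ref{lem:redCn}, to the case $G=\vec{C}_n$, and then handles $H=K^*_m$ and $H=\vec{C}_m$ by explicit constructions: concatenations of carefully chosen dipaths, decompositions of $K^*_m$ into hamiltonian dipaths (Tillson), and the known hamiltonian decomposability of the tensor product $\vec{C}_n\times K^*_m$, which plays roughly the role you intended for your Stage~1 skeleton but with the parity obstructions already absorbed. The negative results ($\vec{C}_n\wr K^*_2$ and $\vec{C}_n\wr\vec{C}_3$ for even $n$) are proved by ad hoc counting and by a group-theoretic parity invariant on $2$-factorizations, respectively --- exactly the kind of obstruction your sketch glosses over. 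If you want to salvage your outline, the first task is to characterize when $\vec{C}_m\wr\overline{K}_n$ is hamiltonian decomposable (it is not always), and the second is to make the splicing argument precise; as the paper's counterexamples show, no amount of care will make it work for every pair $(G,H)$.
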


In \cite{Ng}, Ng  identifies two exceptions to Conjecture \ref{conj:main}. Namely, if $G$ is a directed $n$-cycle such that $n$ is odd and $H\in \{\overline{K}_2, K^*_2\}$, then the wreath product of $G$ with $H$ is not hamiltonian decomposable. However, if $H$ is a digraph on at least three vertices, then Ng affirms Conjecture \ref{conj:main} in the following case. 

\begin{theorem} \cite{Ng}
\label{thm:ng}
Let $G$ and $H$ be two hamiltonian decomposable digraph such that $|V(G)|$ is odd and $|V(H)|\geqslant 3$. Then $G \wr H$ is hamiltonian decomposable.
\end{theorem}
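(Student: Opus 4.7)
The plan is to build a Hamilton decomposition of $G \wr H$ by partitioning its arc set into tractable pieces and constructing directed Hamilton cycles on each piece. Write $n = |V(G)|$ (odd) and $m = |V(H)| \geq 3$, and fix Hamilton decompositions $G = C_1 \cup \cdots \cup C_k$ and $H = D_1 \cup \cdots \cup D_\ell$, where each $D_j = (d^{(j)}_0, d^{(j)}_1, \ldots, d^{(j)}_{m-1}, d^{(j)}_0)$. A simple arc count shows that a Hamilton decomposition of $G \wr H$ must consist of exactly $km + \ell$ directed Hamilton cycles, each of length $nm$. I would use the partition
\[
A(G \wr H) \;=\; A(C_1 \wr H) \;\sqcup\; \bigsqcup_{i=2}^{k} A(C_i \wr \overline{K}_m),
\]
absorbing all fiber copies of $H$ into the first term. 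For each $i \in \{2, \ldots, k\}$ I would invoke the known fact (provable by a standard shift construction) that $C_n \wr \overline{K}_m$ is hamiltonian decomposable into $m$ directed Hamilton cycles whenever $n$ is odd and $m \geq 3$, accounting for $(k-1)m$ of the required cycles. The problem then reduces to decomposing $C_n \wr H$ into the remaining $m + \ell$ Hamilton cycles.

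For this base case I would construct two families of cycles. First, for each $j \in \{1, \ldots, \ell\}$, build a fiber-absorbing Hamilton cycle $\tilde{D}_j$ by the following template: in each fiber $v$, enter at $d^{(j)}_{\alpha(j,v)}$ for a chosen $\alpha(j,v) \in \Z_m$, traverse the $m-1$ consecutive arcs of $D_j$ ending at $d^{(j)}_{\alpha(j,v)-1}$, and then use the single between-fiber arc $\bigl((v, d^{(j)}_{\alpha(j,v)-1}),\, (v+1, d^{(j)}_{\alpha(j,v+1)})\bigr)$ to enter fiber $v+1$. This yields a directed cycle of length $n(m-1) + n = nm$ as required. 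Second, build $m$ additional Hamilton cycles using the remaining between-fiber arcs of $C_n \wr \overline{K}_m$ together with the $n\ell$ ``skipped'' fiber arcs of the form $(d^{(j)}_{\alpha(j,v)-1}, d^{(j)}_{\alpha(j,v)})$ that were not used by any $\tilde{D}_j$. The entry functions $\alpha(j, \cdot)\colon \Z_n \to \Z_m$ should be chosen so that the between-fiber arcs used across the $\ell$ cycles $\tilde{D}_j$ are pairwise distinct; a natural first attempt is the linear rule $\alpha(j, v) \equiv jv \pmod{m}$.

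The main obstacle is verifying that the residual subdigraph---isomorphic to $C_n \wr \overline{K}_m$ with $n\ell$ between-fiber arcs replaced by $n\ell$ short fiber arcs---decomposes into $m$ directed Hamilton cycles. I would address this by prescribing the entry functions via a careful starter construction and exhibiting the $m$ residual Hamilton cycles explicitly as shifts of one or two base cycles, with the borrowed fiber arcs slotted in at prescribed positions. The oddness of $n$ enters essentially here to ensure that the cyclic shifts produce genuine Hamilton cycles rather than collections of shorter closed walks, and the hypothesis $m \geq 3$ provides the room in $\Z_m$ needed to keep the swapped arcs compatible with the shift structure. Combining the base case with the initial reduction yields $(k-1)m + (m+\ell) = km + \ell$ directed Hamilton cycles, completing the decomposition.
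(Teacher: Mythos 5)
Your opening reduction is sound and is exactly the route the paper takes: splitting $A(G\wr H)$ into $A(C_1\wr H)$ together with the copies $A(C_i\wr \overline{K}_m)$ for $i\geq 2$ is precisely the content of Lemma~\ref{lem:redCn} (which the paper attributes to the proof of Proposition~1 of \cite{Ng}), and your arc count $km+\ell$ is correct. The problem is that after this reduction the entire mathematical content of the theorem lives in the base case $\vec{C}_n\wr H$, and there your argument stops at exactly the point where the work begins. You correctly observe that the $\ell$ fiber-absorbing cycles $\tilde{D}_j$ are Hamilton cycles for any choice of entry functions, but you then write that the ``main obstacle'' is showing that the residual digraph --- the unused between-fiber arcs together with the $n\ell$ skipped horizontal arcs --- decomposes into $m$ directed Hamilton cycles, and you resolve this only by promising ``a careful starter construction.'' No such construction is given, nor is it shown that entry functions exist making the $\ell n$ between-fiber arcs of the $\tilde{D}_j$ pairwise distinct (your candidate $\alpha(j,v)\equiv jv \pmod m$ compares positions along possibly differently-labelled cycles $D_j$, so distinctness of the $\alpha$-values does not by itself give distinctness of the arcs). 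This residual decomposition is precisely Ng's contribution, and it cannot be waved through.

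The gap is not merely one of missing detail: the two hypotheses of the theorem must enter at exactly the step you defer, and your sketch gives no mechanism for either. The statement fails for $m=2$ ($\vec{C}_n\wr \overline{K}_2$ and $\vec{C}_n\wr K_2^*$ are not hamiltonian decomposable for odd $n$, as noted in Section~\ref{S:intro}), and it fails for even $n$ in general (this very paper proves $\vec{C}_n\wr\vec{C}_3$ and $\vec{C}_n\wr K_2^*$ are not hamiltonian decomposable for even $n$, by a rather delicate parity obstruction on 2-factorizations). So any correct proof must make essential, checkable use of $n$ odd and $m\geq 3$; asserting that oddness ``enters essentially here'' without exhibiting where the construction would break for even $n$ leaves open the possibility that the proposed scheme cannot be completed at all. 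As it stands the proposal is a plausible plan whose load-bearing step is missing.
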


Crucial to Ng's approach is the following lemma, implicitly proved in the proof of Proposition 1 of \cite{Ng}, which can be viewed as a reduction step. This lemma is also key to our approach. 

\begin{lemma} \cite{Ng}
\label{lem:redCn}
Let $G$ and $H$ be two hamiltonian decomposable digraphs such that $|V(G)|=n$ and $|V(H)|>2$.  If $\vec{C}_n \wr H$ is hamiltonian decomposable, then $G \wr H$ is also hamiltonian decomposable.   
\end{lemma}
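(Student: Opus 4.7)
Let $m = |V(H)|$, so $m \geq 3$ by hypothesis. The plan is to use the given hamiltonian decomposition of $G$ to split $A(G \wr H)$ into arc-disjoint subdigraphs that are each individually hamiltonian decomposable: one block isomorphic to $\vec{C}_n \wr H$ (handled by the assumption) and the remaining blocks each isomorphic to $\vec{C}_n \wr \overline{K}_m$ (handled by a standard auxiliary fact).

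Concretely, I would first fix a hamiltonian decomposition $A(G) = A(C^1) \sqcup \cdots \sqcup A(C^r)$, where each $C^i \cong \vec{C}_n$ and $r$ is the common out-degree in $G$. The arcs of $G \wr H$ split naturally into \emph{horizontal} arcs lifted from $A(G)$ (each arc of $G$ blown up into the complete bipartite digraph on two parts of size $m$) and \emph{vertical} arcs, namely the copies of $H$ on each column $\{g\} \times V(H)$. The horizontal arcs decompose further as $\bigsqcup_{i=1}^{r} A(C^i \wr \overline{K}_m)$, and absorbing all vertical copies of $H$ into the first block yields
\[
A(G \wr H) \;=\; A(C^1 \wr H) \;\sqcup\; \bigsqcup_{i=2}^{r} A(C^i \wr \overline{K}_m).
\]
The first block is isomorphic to $\vec{C}_n \wr H$ and admits a hamiltonian decomposition by hypothesis, while each of the remaining blocks is isomorphic to $\vec{C}_n \wr \overline{K}_m$. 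Concatenating the hamiltonian cycles produced in each block then yields a hamiltonian decomposition of $G \wr H$.

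The crucial step is therefore the auxiliary fact that $\vec{C}_n \wr \overline{K}_m$ is hamiltonian decomposable whenever $n \geq 2$ and $m \geq 3$. I would establish this by choosing, for each $k \in \mathbb{Z}_m$, a perfect matching between columns $i$ and $i+1$ at every level $i \in \mathbb{Z}_n$ subject to two constraints: (i) as $k$ varies, the $m$ matchings selected at each fixed level $i$ partition the horizontal arcs between columns $i$ and $i+1$; and (ii) for each fixed $k$ the resulting $2$-factor is connected, and hence a single directed $nm$-cycle. Condition (i) is a Latin-square-type condition, while a counting argument modulo $m$ shows that (ii) can be enforced simultaneously for all $k$ whenever $m \geq 3$; by contrast, a parity obstruction blocks it when $m = 2$ and $n$ is odd, which is exactly why the hypothesis $|V(H)| > 2$ is needed. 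This auxiliary decomposition is the main delicate point and is the implicit content of the relevant step in the proof of Proposition 1 of \cite{Ng}.
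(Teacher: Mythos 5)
Your proposal is correct and follows essentially the same route as the argument the paper is citing (the reduction implicit in Proposition~1 of Ng): decompose $G$ into directed hamiltonian cycles, absorb the copies of $H$ into one cycle to get a spanning block isomorphic to $\vec{C}_n \wr H$, and decompose each remaining spanning block $C^i \wr \overline{K}_m \cong \vec{C}_n \wr \overline{K}_m$ into directed hamiltonian cycles (``take the union of'' rather than ``concatenate'' is the accurate description of the final step). The only soft spot is the auxiliary fact that $\vec{C}_n \wr \overline{K}_m$ is hamiltonian decomposable for $m \geq 3$: your ``counting argument modulo $m$'' is not spelled out, and the naive version using only powers of a single $m$-cycle fails when $n$ is odd, so this step should either be carried out carefully (using a sharply transitive set of permutations at each level whose products are $m$-cycles) or simply cited, as it is a known result and is exactly where the hypothesis $|V(H)|>2$ enters.
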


Theorem \ref{thm:ng} and Lemma \ref{lem:redCn} imply that it suffices to concentrate on the case of Conjecture \ref{conj:main} where $G=\vec{C}_n$ with $n$ even. In \cite{Me}, the author provides a near-complete solution to Conjecture \ref{conj:main} when $|V(G)|$ is even and $H$ is not a complete graph nor a directed cycle. 

\begin{theorem} \cite{Me}
\label{thm:al}
Let $G$ and $H$ be two hamiltonian decomposable digraphs such that $|V(G)|$ is even, $|V(H)|>3$, $|V(H)|=m$, and $H \not\in \{K^*_m, \vec{C}_m\}$. Then $G \wr H$ is hamiltonian decomposable except possibly when $G$ is a directed cycle, $|V(H)|$ is even, and $H$ admits a decomposition into an odd number of directed hamiltonian cycles.
\end{theorem}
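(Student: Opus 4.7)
The plan is to apply Lemma \ref{lem:redCn} and then construct a hamiltonian decomposition of $\vec{C}_n \wr H$ directly. Since $|V(H)| = m > 3$, Lemma \ref{lem:redCn} reduces the problem to the case $G = \vec{C}_n$ with $n$ even. I would fix a decomposition $H = C_1 \cup \cdots \cup C_k$ of $H$ into arc-disjoint directed hamiltonian cycles; the hypotheses $H \neq \vec{C}_m$ and $H \neq K_m^*$ force $2 \leq k \leq m-2$. Since $\vec{C}_n \wr H$ is $(m+k)$-regular on $nm$ vertices, the goal is to exhibit $m+k$ arc-disjoint directed hamiltonian cycles.

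I would partition the $m+k$ target cycles into two families. First, for each $j \in \{1, \ldots, k\}$, build a hamiltonian cycle $D_j$ that in each layer $i$ (the copy of $H$ indexed by vertex $i$ of $\vec{C}_n$) follows a hamiltonian directed path obtained by removing a single arc from $C_j$, then jumps to the next layer via a single inter-layer connector arc. For $D_j$ to be a single hamiltonian cycle, the $n$ connectors must be chosen so that the permutation of $V(H)$ induced by traversing all $n$ layers is a single $m$-cycle. These $k$ cycles account for $nk(m-1)$ intra-layer arcs and $nk$ inter-layer arcs, leaving $nk$ intra-layer arcs (the omitted ones) together with $n(m^2-k)$ inter-layer arcs. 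The second family consists of $m$ further hamiltonian cycles that jointly absorb the leftover arcs, modelled on the classical hamiltonian decomposition of $\vec{C}_n \wr \overline{K}_m$ into $m$ hamiltonian cycles but rerouted through the leftover intra-layer ``shortcuts''.

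The hardest step is the construction of this second family. Encoding each such cycle by its sequence of shifts in $\Z_m$, hamiltonicity amounts to the total shift being coprime to $m$, and every intra-layer shortcut used perturbs the total shift by a controlled amount. When $m$ is even, coprimality forces the total shift to be odd, and the net parity of the $nk$ perturbations depends on the parity of $k$; when $k$ is odd a parity mismatch can prevent at least one of the $m$ cycles from being hamiltonian, which is precisely the exception stated in the theorem. For $G \neq \vec{C}_n$ the extra freedom supplied by a hamiltonian decomposition of $G$ containing at least two cycles can be exploited to bypass this obstruction, which is why the exception is restricted to directed cycles. Verifying endpoint compatibility within each copy of $H$ --- having enough flexibility in which arc of each $C_j$ is removed in each layer --- requires $k \geq 2$ and $m \geq 4$, which is why the cases $H = \vec{C}_m$ and $H = K_m^*$ are excluded here and treated separately in the present paper.
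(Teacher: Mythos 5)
This statement is not proved in the present paper at all: Theorem \ref{thm:al} is quoted from the separate manuscript \cite{Me} (listed as ``in preparation''), so there is no in-paper argument to compare your proposal against. Judged on its own, your write-up is a strategy outline rather than a proof, and the gap is concentrated exactly where you say the difficulty lies. The set-up is sound --- the reduction via Lemma \ref{lem:redCn}, the degree count showing $m+k$ cycles are needed, the observation that $H \notin \{\vec{C}_m, K^*_m\}$ forces $2 \leq k \leq m-2$, and the arc counts for the two families are all correct. But the first family is only half-specified (you never say how to choose the removed arc of $C_j$ in each layer and the connector arcs so that the $k$ cycles are pairwise arc-disjoint, noting that unlike the $K^*_m$ case in Proposition \ref{prop:complete2} you cannot reverse a hamiltonian dipath of $C_j$, so the entry point of layer $i$ rigidly determines the exit point), and the second family --- the $m$ hamiltonian cycles absorbing the $nk$ leftover horizontal arcs together with the $n(m^2-k)$ unused non-horizontal arcs --- is not constructed at all. ``Modelled on the decomposition of $\vec{C}_n \wr \overline{K}_m$ but rerouted through the shortcuts'' is precisely the hard content of the theorem, and asserting that a parity count ``can prevent'' hamiltonicity in the exceptional case is not the same as proving the construction succeeds in all non-exceptional cases.

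There is also an internal tension you should resolve: you open by invoking Lemma \ref{lem:redCn} to reduce to $G=\vec{C}_n$, but then explain the shape of the exception by appealing to ``extra freedom'' available when $G$ is not a directed cycle. Those two moves are incompatible --- once you reduce to $\vec{C}_n$ you have discarded that freedom, so in the case ($m$ even, $k$ odd) you would have to abandon the reduction and work with a hamiltonian decomposition of $G$ into at least two cycles directly, which is a separate construction you have not sketched. As it stands the proposal identifies the right obstacles and the right bookkeeping, but every step that actually requires an idea is deferred.
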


Using a very different approach from \cite{Me}, we complement Theorem \ref{thm:al} for cases of Conjecture \ref{conj:main} in which $H$ is a complete symmetric digraph or a directed cycle. Our contribution to Conjecture \ref{conj:main} is summarized in Theorems \ref{thm:1} and \ref{thm:2} below. 

\begin{theorem}
\label{thm:1}
Let $G$ be a hamiltonian decomposable digraph of even order $n$.  The digraph $G \wr K^*_{m}$ is hamiltonian decomposable if and only if $(n,m) \neq(2,3)$ and $m \neq 2$.
\end{theorem}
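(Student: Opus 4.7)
The plan is to prove both directions of the biconditional, splitting the argument into necessity and sufficiency with two subcases each.

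For necessity, if $(n, m) = (2, 3)$ then the unique hamiltonian decomposable digraph of order $2$ is $\vec{C}_2 \cong K^*_2$, whence $G \wr K^*_3 \cong K^*_2 \wr K^*_3 \cong K^*_6$; since $K^*_6$ is not hamiltonian decomposable by Bermond--Tillson, this subcase is handled. If $m = 2$, then $K^*_2 = \vec{C}_2$ and we argue that $G \wr \vec{C}_2$ is not hamiltonian decomposable. The starting observation is that every directed hamiltonian cycle in $G \wr \vec{C}_2$ contains at most one within-copy arc per copy of $\vec{C}_2$, since using both would embed a directed $2$-cycle in the hamiltonian cycle. Consequently, in any putative decomposition into $2d + 1$ hamiltonian cycles (where $d$ is the common in- and out-degree of $G$), the $2n$ within-copy arcs distribute so that each copy of $\vec{C}_2$ contributes exactly one arc to exactly two distinct cycles. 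For $G = \vec{C}_n$, projecting each hypothetical hamiltonian cycle to $\vec{C}_n$ shows that its number of within-copy arcs is either $0$ or $n$, forcing the distribution $(0, n, n)$ across the $2d+1 = 3$ cycles; a parity argument that tracks the second-coordinate shifts across the $n$ between-copy slabs then yields a $0 \equiv 1 \pmod 2$ contradiction, extending Ng's argument from odd $n$ to all $n$. For general $G$ of even order, a parallel slab-parity analysis carried out along a hamiltonian cycle of $G$ (which exists since $G$ is hamiltonian decomposable) produces the same $\mathbb{Z}/2$ contradiction.

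For sufficiency, suppose $m \geq 3$ and $(n, m) \neq (2, 3)$. Since $|V(K^*_m)| = m > 2$, Lemma~\ref{lem:redCn} reduces the task to decomposing $\vec{C}_n \wr K^*_m$ into $2m - 1$ directed hamiltonian cycles. If $n = 2$, then $\vec{C}_2 \wr K^*_m \cong K^*_2 \wr K^*_m \cong K^*_{2m}$, and since the hypothesis $m \geq 4$ (the case $m = 3$ being excluded) gives $2m \geq 8 \notin \{4, 6\}$, the Bermond--Tillson theorem finishes this subcase. For $n \geq 4$ even, we build the decomposition explicitly. The plan is to fix a hamiltonian decomposition $D_1, \dots, D_{m-1}$ of $K^*_m$ and, for each $k$, assemble a family of hamiltonian cycles of $\vec{C}_n \wr K^*_m$ that traverse a hamiltonian path of $D_k$ (obtained by deleting one arc) inside each copy and link consecutive copies by a single between-copy arc, varying the deleted arc so as to exhaust the within-copy arcs of $D_k$; the remaining between-copy arcs are organized into $m$ additional hamiltonian cycles defined by shift patterns on $\mathbb{Z}_m$.

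The hard part will be verifying, for $n \geq 4$ even, that each constructed subgraph is a single hamiltonian cycle rather than a disjoint union of shorter directed cycles, and that the $2m - 1$ cycles together partition $A(\vec{C}_n \wr K^*_m)$ exactly. The connectivity of the shift-based between-copy cycles is sensitive to arithmetic conditions involving $\gcd(m, n)$, so small cases---in particular $m = 3$ combined with small even $n$---are likely to require bespoke constructions distinct from the generic scheme. We expect the $\mathbb{Z}/2$-parity obstruction underpinning the $m = 2$ necessity to be the technically cleanest step, while the explicit construction for $n \geq 4$ even will demand the bulk of the case analysis.
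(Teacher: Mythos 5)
Your overall architecture matches the paper's: reduce to $G=\vec{C}_n$ via Lemma~\ref{lem:redCn}, dispose of $n=2$ via $\vec{C}_2\wr K^*_m\cong K^*_{2m}$, and for $n\geq 4$ cover the horizontal arcs by cycles that traverse a hamiltonian path of $K^*_m$ inside each copy, treating the leftover non-horizontal arcs separately. But your construction for $n\geq 4$ has a counting flaw and misses where the real difficulty lies. If each of your cycles uses a hamiltonian path ($m-1$ horizontal arcs) in each of the $n$ copies plus $n$ linking arcs, then exhausting all $nm(m-1)$ horizontal arcs requires exactly $m$ such cycles; they cannot be organized into families indexed by the $m-1$ cycles of a hamiltonian decomposition of $K^*_m$, because a single cycle $D_k$ contributes $nm$ arcs across the copies and $n(m-1)\nmid nm$, so ``varying the deleted arc'' cannot tile $D_k$'s arcs by arc-disjoint cycles of your type. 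The paper instead takes a decomposition of $K^*_m$ into $m$ hamiltonian \emph{dipaths} (Tillson), runs $P_j$ forward in even copies and reversed in odd copies, and observes that the leftover is exactly $\vec{C}_n\times K^*_m$, which is hamiltonian decomposable by Theorem~\ref{thm:cat}. This also pins down exactly which cases escape the generic scheme: $K^*_3$ and $K^*_5$ admit no hamiltonian dipath decomposition, and Theorem~\ref{thm:cat} excludes $m=4$; these three values of $m$ (for all even $n$, not merely small $n$) consume most of Section~\ref{sec:km} via the explicit constructions of Lemma~\ref{lem:spec35}, and your proposal offers no plan for them beyond anticipating ``bespoke constructions.''

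The more serious gap is your $m=2$ necessity claim for general $G$. The parity argument you sketch is essentially the paper's Proposition~\ref{prop:notm2} and works for $G=\vec{C}_n$: the three cycles are forced to split the horizontal arcs as $(n,n,0)$, and the horizontal-free cycle is forced to be two disjoint $n$-cycles. It cannot, however, extend to arbitrary hamiltonian decomposable $G$ of even order, because the statement is false there: $K^*_8$ is hamiltonian decomposable of even order, yet $K^*_8\wr K^*_2=K^*_{16}$ is hamiltonian decomposable. With $2d+1\geq 5$ cycles the horizontal arcs admit many distributions other than $(n,n,0)$, and no $\mathbb{Z}/2$ obstruction survives; note also that Lemma~\ref{lem:redCn} transfers decomposability from $\vec{C}_n\wr H$ to $G\wr H$ but not non-decomposability. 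The paper's own proof of Theorem~\ref{thm:1} handles only the $(n,m)=(2,3)$ obstruction in the ``only if'' direction and establishes the $m=2$ obstruction only for $G=\vec{C}_n$ (which is how the result is advertised in the introduction), so you should restrict your $m=2$ claim to $G=\vec{C}_n$ rather than attempt a general proof.
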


\begin{theorem}
\label{thm:2}
Let $G$ be a hamiltonian decomposable digraph of even order $n$. The digraph $G\wr \vec{C}_{m}$ is hamiltonian decomposable in each of the following cases:
\begin{enumerate} [label=\textbf{(S\arabic*)}]
\item $m$ is even, $m \geqslant 4$, and $n \geqslant 4$;
\item $m=4$ and $n=2$;
\item $m$ is odd, and $m\geqslant 5$.
\end{enumerate}
Furthermore, if $G=\vec{C}_n$ and $m\in \{2,3\}$, then $G\wr \vec{C}_m$ is not hamiltonian decomposable. 
\end{theorem}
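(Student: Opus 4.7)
The plan is to treat the two directions of Theorem~\ref{thm:2} by separate methods. For the affirmative cases (S1)--(S3), I would first invoke Lemma~\ref{lem:redCn} to reduce to $G = \vec{C}_n$ (valid since $m \geq 4$ ensures $|V(\vec{C}_m)| > 2$), and then construct a decomposition of $\vec{C}_n \wr \vec{C}_m$ into $m+1$ directed hamiltonian cycles. Labelling vertices as $(i,j) \in \Z_n \times \Z_m$, I classify arcs as \emph{vertical} (within one column) or \emph{horizontal} (from column $i$ to column $i+1$). A key structural observation is that any directed hamiltonian cycle in $\vec{C}_n \wr \vec{C}_m$ uses a constant number $h \in \{1,\ldots,m\}$ of horizontal arcs per column boundary, and across any hamiltonian decomposition these values satisfy $\sum_C h_C = m^2$.

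For the positive construction, I would specify each cycle $C$ by giving, in every column $i$, its entry set $E_i^{(C)} \subseteq \Z_m$ (which determines the exit set $X_i^{(C)} = E_i^{(C)} - 1$ and the traversal pattern inside the column) together with a bijection $X_i^{(C)} \to E_{i+1}^{(C)}$ at each boundary. In (S1) and (S3), I plan to use a small family of starter cycles and generate the remaining ones by natural $\Z_m$-rotations of the second coordinate; the parity conditions on $m$ and $n$ ensure that the product of the $n$ boundary bijections is an $m$-cycle in $S_m$, which is exactly what guarantees that each constructed object is a single hamiltonian cycle rather than a disjoint union. Case (S2), with $n=2$ and $m=4$, is an $8$-vertex digraph that can be handled by an ad hoc decomposition.

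For the negative direction, suppose for contradiction that $\vec{C}_n \wr \vec{C}_m$ with $n$ even and $m \in \{2,3\}$ admits a decomposition into $m+1$ hamiltonian cycles. For $m=2$, the only admissible $h$-multiset is $\{2,1,1\}$: the two $h=1$ cycles are jointly encoded by a subset $S \subseteq \Z_n$ specifying which of the two vertices is entered in each column, and the unique $h=2$ cycle is then forced to use a ``swap'' matching at boundary $i$ exactly when $\mathds{1}_S(i) \neq \mathds{1}_S(i+1)$. The total number of swaps equals the number of cyclic changes of $\mathds{1}_S$, which is always even, while hamiltonicity of the $h=2$ cycle requires it to be odd, a contradiction. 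For $m=3$, the admissible $h$-multisets are $(3,3,2,1)$ and $(3,2,2,2)$. In both subcases, to each $h=2$ cycle $C$ I would associate boundary-wise indicators $\tau_i^{(C)} \in \{0,1\}$ recording its choice of bijection at boundary $i$, and show that hamiltonicity of $C$ is equivalent to $\sum_i \tau_i^{(C)} \equiv 1 \pmod{2}$. The decisive identity relates $\sum_C \tau_i^{(C)}$ to $\sgn(\pi_i)$, where $\pi_i$ is the boundary matching of an $h=3$ cycle; in the $(3,3,2,1)$ case it also uses the fact that in every $1$-factorization of $K_{3,3}$ into three perfect matchings the three matchings have the same sign (since all permutations of $\Z_3$ are affine). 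Summing over $i$, using $n$ even together with the fact that $\prod_i \pi_i$ is a $3$-cycle (even), forces $\sum_C \sum_i \tau_i^{(C)} \equiv 0 \pmod{2}$; but the hamiltonicity condition makes this sum congruent to the number of $h=2$ cycles, which is $1$ or $3$ in the two subcases and hence odd, yielding the desired contradiction.

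The main obstacle I expect is the explicit construction in (S1) and (S3), where for each admissible combination of $m$ and $n$ one must exhibit starter cycles and rotation parameters that simultaneously guarantee edge-disjointness and single-component cycles; verifying the connectivity condition on products of matchings is the delicate step. The negative direction, while demanding careful bookkeeping of entries, exits, and boundary matchings, reduces to a clean parity argument once the invariants $h_C$, $\tau_i^{(C)}$, and $\sgn(\pi_i)$ are isolated.
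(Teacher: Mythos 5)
Your negative direction is sound in outline and, in one place, genuinely better than the paper's. For $m=2$ your parity argument (the $h=2$ cycle must swap at an odd number of boundaries, while the swap pattern is forced by the cyclic entry sequence of the two $h=1$ cycles and hence has even weight) is essentially an equivalent but cleaner reformulation of Proposition~\ref{prop:notm2}. For $m=3$, your $(3,3,2,1)$ subcase coincides with the paper's type-II argument (Lemma~\ref{lem:sitch2} and Proposition~\ref{thm:typ2}), and your $(3,2,2,2)$ subcase proposes a local identity $\sum_{j} s_i[F_j]\equiv \mathds{1}[\sgn(F_3[i])=-1]\pmod 2$ that, if established, replaces the paper's proof of Proposition~\ref{thm:typ1} --- a GAP computation over products of 24 local configurations in a group of order 288 --- by a one-line parity count; checking the paper's Table~\ref{fig:configurations} confirms that this identity does hold in all 24 admissible configurations. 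However, you assert this decisive identity rather than prove it: it still requires either the same enumeration of admissible boundary configurations or a conceptual argument, and likewise your switch-parity claim (Lemma~\ref{lem:switch} in the paper) and the classification of admissible per-boundary crossing multisets need to be written out. These are finite, doable verifications, but as it stands the negative direction is a correct plan with its key local lemmas unproven.

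The genuine gap is in the positive direction. Cases (S1)--(S3) are the bulk of the theorem, and your proposal contains no actual construction: ``a small family of starter cycles and $\mathds{Z}_m$-rotations whose boundary bijections multiply to an $m$-cycle'' is a strategy statement, and you yourself flag it as the main obstacle. Two specific problems: first, a pure rotation orbit of starter cycles naturally produces $m$ cycles, whereas a hamiltonian decomposition of $\vec{C}_n\wr\vec{C}_m$ requires $m+1$, so the horizontal (within-column) arcs force at least one cycle outside any rotation-closed family; second, you miss the structural shortcut that makes the even case tractable, namely peeling off two explicit hamiltonian cycles that absorb all within-column arcs and all difference-$0$ arcs and observing that the remainder is $\vec{C}_n\times K^*_m$, whose hamiltonian decomposability is already known (Theorem~\ref{thm:cat}); this is how Proposition~\ref{prop:neven} works, and it is why $m=4$ and $n=2$ need separate ad hoc treatment (Lemma~\ref{lem:m4}). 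The odd case (Proposition~\ref{prop:odd}) does use permutation-tuple 2-factors of the kind you describe, but with carefully designed wrap-around dipaths and a case split on $m\bmod 4$ that your sketch does not anticipate. Without explicit constructions (or an identification of the external results that substitute for them), the affirmative half of the theorem remains unproved.
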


Of note are two new infinite families of exceptions to Conjecture \ref{conj:main}. Namely,  Theorems \ref{thm:1} and \ref{thm:2} imply that $\vec{C}_n \wr K^*_2$ and  $\vec{C}_n \wr \vec{C}_3$ are not hamiltonian decomposable for all even $n$, respectively. 

\section{Preliminaries}
\label{S:Prm}

Given a digraph $G$, we denote its vertex set as $V(G)$ and its arc set as $A(G)$. In this paper, all digraphs are \textit{strict} meaning that they do not contain loops or repeated arcs. We denote the complete graph on $m$ vertices as $K_m$. The \textit{complete symmetric digraph of order $m$}, denoted $K^*_m$, is the strict digraph on $m$ vertices such that for any two distinct vertices $x$ and $y$, we have $(x, y), (y, x) \in A(K^*_m)$.  The symbol $\vec{C}_m$ denotes the directed $m$-cycle. We refer to the first vertex of a directed path (dipath for short) $P$  as its \textit{source} and denote it as $s(P)$; we also refer to the last vertex of $P$ as its \textit{terminal} and denote it as $t(P)$. Given two dipaths $P=y_1y_2\cdots y_n$ and $Q=x_1 x_2\cdots x_t$, such that $t(P)=s(Q)$, the \textit{concatenation} of $P$ with $Q$ is the directed walk $PQ=y_1y_2\cdots y_{n-1}y_nx_2x_3\cdots x_t$. 

In some of our constructions, we will refer to another product of digraphs. The \textit{tensor product} of digraph $G$ with $H$, denoted $G \times H$, is the digraph on vertex set $V(G) \times V(H)$ such that $((g_1, h_1), (g_2, h_2)) \in A(G \wr H)$ if and only if $(g_1, g_2)\in A(G)$ and $(h_1, h_2)\in A(H)$. 

Next, we establish some key notation and definitions used to describe our constructions.

\begin{notation} \rm
\label{not:conchp5}
Let $H$ be a digraph on $m$ vertices. Let $V(H)=\mathds{Z}_m$ and $V(\vec{C}_n)=\mathds{Z}_n$. Then, $V(\vec{C}_n\wr H)=\{(x, y)\ |\ x\in \mathds{Z}_n, y\in \mathds{Z}_m\}$, and we write shortly $x_y$ for  $(x,y)$. For each $i \in \mathds{Z}_n$, we let $V_i=\{i_0, i_1, \ldots, i_{m-1}\}$. \end{notation}

\begin{definition} \rm
An arc of $\vec{C}_n \wr H$ is of \textit{difference $d$} if it is of the form $(i_{j}, (i+1)_{j+d})$ for some $i \in \mathds{Z}_n$, with addition of the indices done modulo $m$. A \textit{horizontal arc} is an arc of the form $(i_{j_1}, i_{j_2})$ where $(j_1, j_2) \in A(H)$.
 \end{definition}

Definition \ref{def:embed} below allows us to describe a key property of the wreath product. We first note that the subdigraph of $\vec{C}_n \wr H$ induced by the set of vertices $V_i$ is necessarily isomorphic to $H$. We then have the following definition. 

\begin{definition} \rm
\label{def:embed}
Let $H_i$ be the subdigraph of $\vec{C}_n \wr H$ induced by vertex set $V_i$. An \textit{embedding of $H$ into $V_i$} is an isomorphism $\phi: H \mapsto H_i$. 
\end{definition}

It follows from the definition of wreath product that $H$ can be embedded into each $V_i$ arbitrarily. In our constructions, we will embed $H \in \{\vec{C}_m, \vec{P}_m\}$ into each set $V_i$ by specifying the image of $\vec{C}_m$  (or $\vec{P}_m$) by the chosen isomorphism instead of the isomorphism itself.

 Now, we introduce terminology pertaining to cycle decomposition of digraphs. A \textit{decomposition} of  a digraph $G$ is a set $\{H_1, H_2, \ldots, H_r\}$ of pairwise arc-disjoint subdigraphs of $G$ such that $A(G)=A(H_1) \cup A(H_2) \cup \cdots \cup A(H_r)$. A spanning subdigraph of $G$ that is the disjoint union of directed cycles of $G$ is a \textit{directed 2-factor} of $G$. A directed 2-factorization of $G$ is a decomposition of $G$ in which all subdigraphs are 2-factors. Lastly, a directed hamiltonian cycle of $G$ is necessarily a 2-factor of $G$ and a \textit{hamiltonian decomposition} of $G$ is a directed 2-factorization in which all subdigraphs are directed hamiltonian cycles. 
 
Directed 2-factors of $\vec{C}_n \wr H$ that do not contain horizontal arcs can be described as an $n$-tuple of elements from the symmetric group  $S_m$ as follows. 

\begin{notation}\rm
\label{not:Sm}
An $n$-tuple $(\sigma_0, \sigma_1, \ldots, \sigma_{n-1})$ of permutations in $S_m$ corresponds to a directed 2-factor $F$ of $\vec{C}_n \wr H$ with arc set$\{(i_j, (i+1)_{j^{\sigma_i}}) \ |\ j \in \mathds{Z}_m \ \textrm{and} \ i \in \mathds{Z}_n\}$. We then write $F=(\sigma_0, \sigma_1, \ldots, \sigma_{n-1})$.  
\end{notation}

For $\sigma \in S_m$, we denote the number of cycles of $\sigma$ in its disjoint cycle notation as $T(\sigma)$. It is easy to see that the number of directed cycles in $F=(\sigma_0, \sigma_1, \ldots, \sigma_{n-1})$ equals $T(\sigma_0\sigma_1\cdots \sigma_{n-1})$. The directed 2-factor $F=(\sigma_0, \sigma_1, \ldots, \sigma_{n-1})$ is a hamiltonian cycle if and only if $T(\sigma_0\sigma_1\cdots \sigma_{n-1})=1$. 

\section{Tools}
\label{S:tool}

In this section, we survey existing results that are used in our constructions. Firstly, in Section \ref{sec:km}, we will be using decompositions of $K^*_m$ into hamiltonian dipaths to construct hamiltonian decompositions of $\vec{C}_n \wr K^*_m$. The question of existence for these decompositions is settled in \cite{Tilson} by Tillson. When $m$ is even, Tillson \cite{Tilson}  obtains the desired decomposition from a decomposition of $K_m$ into hamiltonian paths constructed in \cite{Tarsi} by simply orienting each path in the decomposition to obtain two hamiltonian dipaths. This gives the following corollary. 
 
\begin{corollary} \cite{Tilson}
\label{cor:dipatheven}
Let $m$ be an even integer. The digraph $K^*_m$ admits a decomposition into hamiltonian dipaths. 
\end{corollary}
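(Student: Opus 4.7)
The plan is to import the existing decomposition of $K_m$ into hamiltonian paths and then orient each path in both possible directions, which doubles the number of paths and exactly exhausts the arcs of $K^*_m$.

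First I would recall the relevant counting. For even $m$, $K_m$ has $\binom{m}{2}=\frac{m(m-1)}{2}$ edges, and a hamiltonian path has $m-1$ edges, so any decomposition of $K_m$ into hamiltonian paths uses exactly $m/2$ paths. On the directed side, $K^*_m$ has $m(m-1)$ arcs and a hamiltonian dipath has $m-1$ arcs, so a decomposition into hamiltonian dipaths must consist of exactly $m$ dipaths—precisely twice as many. This matches what one gets by orienting each undirected hamiltonian path in two ways.

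Next I would invoke Tarsi's theorem from \cite{Tarsi}, which guarantees that for every even $m$ the complete graph $K_m$ admits a decomposition $\{P_1, P_2, \ldots, P_{m/2}\}$ into hamiltonian paths. For each $P_i = v_{i,1}v_{i,2}\cdots v_{i,m}$ I would define the two oriented hamiltonian dipaths
\[
\vec{P}_i^{\,+} = v_{i,1}v_{i,2}\cdots v_{i,m} \qquad \text{and} \qquad \vec{P}_i^{\,-}=v_{i,m}v_{i,m-1}\cdots v_{i,1},
\]
and take the collection $\{\vec{P}_i^{\,+}, \vec{P}_i^{\,-} : 1 \leq i \leq m/2\}$ of $m$ hamiltonian dipaths in $K^*_m$.

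To finish, I would verify that this collection is an arc decomposition of $K^*_m$. Since $\{P_1, \ldots, P_{m/2}\}$ partitions $E(K_m)$, every unordered pair $\{u,v\}$ of vertices appears as an edge in exactly one $P_i$. That edge contributes the arc $(u,v)$ to exactly one of $\vec{P}_i^{\,+}, \vec{P}_i^{\,-}$ and the arc $(v,u)$ to the other, so each of the two arcs between $u$ and $v$ in $K^*_m$ is covered exactly once. Hence the $m$ dipaths form an arc-disjoint decomposition of $K^*_m$ into hamiltonian dipaths, as required.

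There is essentially no obstacle beyond citing Tarsi's result; the only thing to be careful about is the bookkeeping that shows the two orientations of a single undirected path together cover both arcs between each consecutive pair without overlap, which is immediate from the definition of the reverse dipath.
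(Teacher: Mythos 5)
Your proposal is correct and is essentially the same argument the paper sketches: it attributes the even case to orienting each path of Tarsi's hamiltonian path decomposition of $K_m$ in both directions to obtain two hamiltonian dipaths, exactly as you do. Your added counting and arc-disjointness check are fine but not needed beyond what the paper already indicates.
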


As for the case $m$ is odd, Tillson \cite{Tilson} provides original constructions to obtain the following result.

\begin{theorem} \cite{Tilson}
\label{thm:Til}
Let $m$ be an odd integer such that $m \geqslant 7$.  The digraph $K^*_m$ admits a decomposition into hamiltonian dipaths. 
\end{theorem}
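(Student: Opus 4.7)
The plan is to work on vertex set $\mathds{Z}_m$ and prove the theorem by direct construction using the difference method. Assign each arc $(x, y)$ of $K^*_m$ the nonzero difference $y - x \pmod m$; then $K^*_m$ has exactly $m$ arcs of each nonzero difference $d$. Since a hamiltonian dipath has $m-1$ arcs, any decomposition must use exactly $m$ dipaths.

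The cleanest attempt would be to exhibit a single base hamiltonian dipath $P^*$ on $\mathds{Z}_m$ whose arcs realize each nonzero difference exactly once, and then take the $m$ translates $P^* + k$ for $k \in \mathds{Z}_m$. Because translation preserves the difference of each arc and freely permutes the arcs of a given difference, these $m$ translates would automatically partition $A(K^*_m)$. Such a base dipath is precisely a \emph{sequencing} of $\mathds{Z}_m$; by Gordon's theorem, $\mathds{Z}_m$ is sequenceable if and only if it has a unique element of order $2$, which fails for every odd $m$. So a single-orbit construction cannot work, and a more intricate scheme is required.

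To bypass the sequencing obstruction, I would use several base dipaths $P^*_1,\ldots,P^*_t$ and share the $\mathds{Z}_m$-translation action among them: arrange the combined arcs so that each nonzero difference appears exactly $t$ times in total, and split the translates so that altogether we obtain exactly $m$ dipaths whose arcs partition $A(K^*_m)$. I would split into cases according to the residue of $m$ modulo $4$ (or perhaps modulo $6$), describe for each case a short family of base dipaths given by explicit difference sequences, and verify hamiltonicity by checking that the partial sums of each prescribed difference sequence form a complete residue system modulo $m$. The very small cases $m \in \{7, 9, 11\}$, where the residue-class constructions are tightest, should be handled by explicit ad~hoc decompositions, perhaps assisted by computer search.

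The main obstacle is reconciling the two competing constraints on each base dipath: its arcs must cover the prescribed differences, while its vertex sequence (determined by the partial sums of those differences) must avoid repetitions modulo $m$. This is exactly the difficulty that defeats plain sequencings, so the success of the multi-dipath framework hinges on carefully tailoring the base paths to each residue class, then patching the constructions with the small cases.
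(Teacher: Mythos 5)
This statement is not proved in the paper at all; it is imported verbatim from Tillson's work, so the only fair comparison is between your outline and the kind of construction Tillson actually carries out. Your setup is sound: the arc count forcing exactly $m$ dipaths, the difference labelling, and the observation that a single base dipath realizing every nonzero difference exactly once cannot exist for odd $m$ (indeed, more directly than via Gordon's theorem: the sum of all nonzero residues modulo an odd $m$ is $0$, so the terminal of such a dipath would coincide with its source). All of that is correct and correctly diagnoses why the problem is hard.

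The gap is that everything after that diagnosis is a plan rather than a proof. The entire mathematical content of the theorem resides in the explicit base dipaths: which difference sequences to use for each residue class of $m$, how to distribute the $\mathds{Z}_m$-translates among several orbits so that each nonzero difference is covered exactly $m$ times in total, and the verification that every prescribed partial-sum sequence is a complete residue system modulo $m$. None of this is exhibited, not even for a single value of $m$, and the small cases $m\in\{7,9,11\}$ that you flag as needing ad hoc treatment are likewise left open. Since the obstruction you identified shows precisely that no generic or purely formal argument can succeed here, the burden of proof falls entirely on the constructions you have deferred; as written, the proposal establishes only that the naive approach fails, not that any approach succeeds. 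To close the gap you would need to supply the explicit families of base dipaths (or reproduce Tillson's), together with the routine but essential checks of arc-disjointness and hamiltonicity.
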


To use the results of Corollary \ref{cor:dipatheven} and Theorem \ref{thm:Til}, we need the following property of decompositions of $K^*_m$ into hamiltonian dipaths first established in \cite{Berm2}.

\begin{lemma} \cite{Berm2}
\label{lem:disin}
If $D$ is a decomposition of $K^*_m$ into hamiltonian dipaths, then no two distinct dipaths in $D$ have the same source or the same terminus. 
\end{lemma}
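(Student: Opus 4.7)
The plan is to prove the lemma by a simple double-counting argument on the degrees of each vertex across the dipaths in the decomposition $D$.

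First, I would record the basic arithmetic: $K^*_m$ has $m(m-1)$ arcs, while a hamiltonian dipath on $m$ vertices has exactly $m-1$ arcs, so $D$ consists of exactly $m$ hamiltonian dipaths. In $K^*_m$, every vertex $v$ has in-degree $m-1$ and out-degree $m-1$. In each hamiltonian dipath $P \in D$, the vertex $v$ plays exactly one of three roles: source of $P$ (contributing $0$ to in-degree and $1$ to out-degree of $v$), terminus of $P$ (contributing $1$ to in-degree and $0$ to out-degree of $v$), or an internal vertex of $P$ (contributing $1$ to each).

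Next, I would fix a vertex $v$ and let $s$, $t$, and $i$ denote the numbers of dipaths in $D$ in which $v$ appears as source, terminus, and internal vertex, respectively. Since $D$ has $m$ dipaths and each dipath contains $v$ exactly once, $s+t+i = m$. Summing the out-degree contributions across all dipaths of $D$ and using that $D$ is a decomposition of $K^*_m$ gives $s + i = m-1$; similarly the in-degree count yields $t + i = m-1$. Subtracting these equations from $s+t+i = m$ forces $s = 1$ and $t = 1$.

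Since this holds for every $v \in V(K^*_m)$, each vertex is the source of exactly one dipath in $D$ and the terminus of exactly one dipath in $D$, which is exactly the assertion of the lemma. There is no real obstacle here; the argument is a short degree/pigeonhole computation, and the main thing to get right is handling the three cases (source, terminus, internal) consistently when summing.
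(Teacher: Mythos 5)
Your proof is correct. The paper does not reproduce a proof of this lemma at all --- it is simply cited from Bermond and Faber \cite{Berm2} --- and your degree-counting argument (there are exactly $m$ dipaths, and for each vertex the relations $s+t+i=m$, $s+i=m-1$, $t+i=m-1$ force $s=t=1$) is the standard self-contained justification, so nothing further is needed.
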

 
Lastly, we will also refer to the following result of Paulraja and Sivansankar regarding hamiltonian decompositions of $\vec{C}_{n}\times K^*_m$. 
 
\begin{theorem} \cite{Praja2}
\label{thm:cat} 
Let $n\geqslant 4$ be an even integer, $m \geqslant 3$, and $m \neq 4$. The digraph $\vec{C}_{n}\times K^*_m$ is hamiltonian decomposable. 
\end{theorem}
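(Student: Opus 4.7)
The plan is to use the $n$-tuple notation from Notation~\ref{not:Sm}, which extends naturally to $\vec{C}_n\times K^*_m$ since this digraph contains no horizontal arcs and, at each level, no arcs of the form $(i_j,(i+1)_j)$. Hence every directed 2-factor corresponds to an $n$-tuple $(\sigma_0,\ldots,\sigma_{n-1})$ of derangements of $\mathds{Z}_m$. The $m(m-1)$ arcs leaving $V_i$ partition neatly into the $m-1$ cyclic-shift derangements $\tau_d:j\mapsto j+d\pmod m$ for $d\in\{1,\ldots,m-1\}$. Using this shift 1-factorization at every level, I would encode a candidate hamiltonian decomposition by an $(m-1)\times n$ array $D=(d_{k,i})$ with entries in $\{1,\ldots,m-1\}$ whose columns are each a permutation of $\{1,\ldots,m-1\}$; the $k$-th 2-factor is then $F_k=(\tau_{d_{k,0}},\ldots,\tau_{d_{k,n-1}})$, and since the product of shifts is the shift by the sum, $F_k$ is a directed hamiltonian cycle if and only if $\gcd\!\bigl(\sum_i d_{k,i},\,m\bigr)=1$.

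The problem thus reduces to constructing such a $D$ with every row sum coprime to $m$, which is where I expect the main difficulty to lie. The total of the row sums equals $n\binom{m}{2}$, fixing their aggregate modulo $m$ and making the obstruction tightest when $m$ has small nontrivial divisors. The exclusion of $m=4$ in the hypothesis is in fact forced by this scheme: the three row sums would total $6n\equiv 0\pmod 4$ since $n$ is even, yet being coprime to $4$ means being odd, and three odd numbers cannot sum to an even number. For admissible $m$, I would split into cases. When $m$ is odd, I would start with the constant template $d_{k,i}=k$ (giving $S_k=nk$) and perform a small number of column swaps to drive each $S_k$ into a residue class coprime to $m$, using the evenness of $n$ to control how each swap shifts the row-sum residues. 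When $m$ is even with $m\geq 6$, I would exploit the extra slack from having more rows to perform analogous swaps, or, if the pure-shift scheme resists adjustment, substitute the shift 1-factorization at one or two levels by a non-shift derangement factorization of the corresponding bipartite arc set.

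The most delicate step will be verifying that such local adjustments can resolve all $m-1$ row sums simultaneously rather than one at a time, which likely requires a case analysis on $m$ modulo its small divisors. After establishing the template construction for a handful of small base cases (say $n=4$ with $m\in\{3,5,6,7\}$) by explicit check, one can extend to all larger even $n$ by appending pairs of columns whose combined contribution to every row sum is $0\pmod m$, preserving the coprimality of each $S_k$.
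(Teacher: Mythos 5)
First, a point of reference: the paper does not prove this statement. Theorem \ref{thm:cat} is quoted from \cite{Praja2} and used as a black box, so there is no in-paper proof to compare yours against; your proposal has to stand on its own as a proof of Paulraja and Sivasankar's result.

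Your reduction is correct as far as it goes: a directed 2-factor of $\vec{C}_n\times K^*_m$ is an $n$-tuple of fixed-point-free permutations, the arcs between consecutive levels split into the $m-1$ shift classes $\tau_d$, and a 2-factor built entirely from shifts is hamiltonian if and only if the sum of its shifts is coprime to $m$. The genuine gap is that the parity obstruction you correctly identify for $m=4$ in fact kills the pure-shift scheme for \emph{every} even $m$: there are $m-1$ rows (an odd number), each row sum must be odd (being coprime to an even $m$), so the total $\sum_k S_k$ would be odd; yet that total equals $n\cdot m(m-1)/2$, which is even when $n$ is even. So for $m\in\{6,8,10,\dots\}$ your primary plan is provably impossible, and the fallback you mention --- replacing the shift 1-factorization at one or two levels by a non-shift derangement factorization --- discards the entire arithmetic framework (the product of the level permutations is then no longer a single shift, so the ``row sum coprime to $m$'' criterion no longer applies), and you give no construction in that regime. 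For odd $m$ the scheme is not obstructed, but ``a small number of column swaps'' is not yet an argument: forcing all $m-1$ row sums to be simultaneously coprime to a composite odd $m$ (say $m=9$ or $m=15$) is a nontrivial simultaneous-congruence problem that must be solved explicitly. Your extension step (appending pairs of columns contributing $0\bmod m$ to every row) is sound once a base case exists, but for even $m$ no base case can be produced by this method at all, so the even case --- half the theorem --- is missing.
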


\section{Decompositions of $\vec{C}_n \wr K^*_m$ into directed hamiltonian cycles}
\label{sec:km}

In this section, we show that $\vec{C}_n \wr K^*_m$ is hamiltonian decomposable for all pairs of positive integers $(n, m)$, where $n$ is even, with the exception of $(n,m)=(2,3)$ and all pairs in which $m=2$. If $(n,m)=(2,3)$, then $\vec{C}_2 \wr K^*_3 \cong K^*_6$. In \cite{Berm2}, Bermond and Faber have shown that $K^*_6$ is not hamiltonian decomposable. As for the case $m=2$, non-existence of a hamiltonian decomposition of $\vec{C}_n \wr K^*_m$  is established in Proposition \ref{prop:notm2} below. 

\begin{proposition}
\label{prop:notm2}
Let $n$ be an even integer. The digraph $\vec{C}_n \wr K^*_2$ is not hamiltonian decomposable. 
\end{proposition}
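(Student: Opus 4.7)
The digraph $\vec{C}_n \wr K^*_2$ is $3$-regular, so it has $6n$ arcs and any hamiltonian decomposition must consist of exactly three directed hamiltonian cycles, say $C_1, C_2, C_3$. My plan is to show that the structure forced by the decomposition is inconsistent when $n$ is even, via a parity argument.

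The first step is to analyze what a single directed hamiltonian cycle $C$ in $\vec{C}_n \wr K^*_2$ can look like. Letting $a_i(C)$ denote the number of arcs of $C$ going from $V_i$ to $V_{i+1}$ and $h_i(C)$ the number of horizontal arcs of $C$ inside $V_i$, a degree count at the cluster $V_i$ gives $a_{i-1}(C)+h_i(C)=2=a_i(C)+h_i(C)$, so $a_i(C)$ is independent of $i$; call this common value $a(C)\in\{1,2\}$ (it cannot be $0$ by connectedness). Thus $h_i(C)=2-a(C)$ is also constant, and $C$ contains $n(2-a(C))$ horizontal arcs in total. Since the three cycles together must use all $2n$ horizontal arcs of the digraph, counting forces exactly two of the cycles, say $C_1$ and $C_2$, to have $a=1$, and exactly one, say $C_3$, to have $a=2$.

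Next I parametrize the two cycles of type $a=1$. Such a cycle enters each $V_i$ by a vertical arc at some vertex $i_{e_i}$, uses the horizontal arc $(i_{e_i},i_{1-e_i})$, and then leaves via the unique vertical arc from $i_{1-e_i}$ to $(i+1)_{e_{i+1}}$. So $C_1$ and $C_2$ are each encoded by a sequence $(e_0,e_1,\dots,e_{n-1})\in\mathds{Z}_2^n$, and any such sequence does yield a directed hamiltonian cycle. Because $C_1$ and $C_2$ together must cover both horizontal arcs $(i_0,i_1)$ and $(i_1,i_0)$ in every $V_i$, we must have $e_i^{(2)}=1-e_i^{(1)}$ for every $i$; write $e_i=e_i^{(1)}$. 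A direct check then shows that the four vertical arcs from $V_i$ to $V_{i+1}$ used by $C_1$ and $C_2$ are exactly those starting at $i_{1-e_i}$ and $i_{e_i}$ and going respectively to $(i+1)_{e_{i+1}}$ and $(i+1)_{1-e_{i+1}}$, leaving the arcs $(i_{e_i},(i+1)_{e_{i+1}})$ and $(i_{1-e_i},(i+1)_{1-e_{i+1}})$ for $C_3$.

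Finally, I apply Notation \ref{not:Sm} to $C_3$: the permutation $\sigma_i\in S_2$ fixes $e_i\mapsto e_{i+1}$ and $1-e_i\mapsto 1-e_{i+1}$, so $\sigma_i$ is the transposition $(0\ 1)$ when $e_i\neq e_{i+1}$ and the identity otherwise. For $C_3$ to be a hamiltonian cycle we need $T(\sigma_0\sigma_1\cdots\sigma_{n-1})=1$, i.e.\ the product must be the transposition, which happens if and only if an odd number of indices $i\in\mathds{Z}_n$ satisfy $e_i\neq e_{i+1}$. But the number of such indices is the number of sign changes around the cyclic sequence $(e_0,e_1,\dots,e_{n-1},e_0)$, which is necessarily even. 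This contradiction shows no hamiltonian decomposition can exist; the main obstacle is simply isolating the right parameterization of the $a=1$ cycles so that the final parity argument becomes immediate.
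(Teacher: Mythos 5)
Your proof is correct, and it follows the same two-step strategy as the paper: first force two of the three hamiltonian cycles to absorb all $2n$ horizontal arcs, then show the third cycle is necessarily the disjoint union of two directed $n$-cycles. The execution of both steps differs in ways worth noting. For the first step, the paper traces a subdipath of a cycle between the two vertices of a cluster $V_i$ and derives a repeated-vertex contradiction, whereas your in/out-degree count at each $V_i$ (giving $a_{i-1}(C)+h_i(C)=a_i(C)+h_i(C)=2$, hence a constant $a(C)\in\{1,2\}$ per cycle, and then $\sum_C(2-a(C))=2$) reaches the same conclusion more cleanly and without any case analysis on where vertices appear along the cycle. For the second step, the paper observes directly that the two vertical arcs of the leftover cycle at each level are either both of difference $0$ or both of difference $1$ and asserts the resulting split into two $n$-cycles; your explicit parametrization of the two type-$1$ cycles by a binary sequence $(e_i)$, together with the identification of $\sigma_i$ as a transposition exactly when $e_i\neq e_{i+1}$ and the fact that a cyclic binary sequence has an even number of sign changes, makes the parity obstruction behind that assertion fully explicit. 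A pleasant byproduct of your argument is that it nowhere uses the parity of $n$, so it simultaneously recovers Ng's result that $\vec{C}_n\wr K^*_2$ is not hamiltonian decomposable for odd $n$.
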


\begin{proof} Suppose that $\mathcal{D}=\{C^0, C^1, C^2\}$ is a decomposition of  $\vec{C}_n \wr K^*_2$ into directed hamiltonian cycles. 

We first show that all horizontal arcs are contained in just two directed hamiltonian cycles of $\mathcal{D}$. Without loss of generality, suppose that $C^0$ contains the horizontal arc $(0_0, 0_1)$ and that the first vertex of  $C^0$ is $0_0$. We claim that $C^0$ contains $n$ horizontal arcs. Suppose that there exists some $i \in \mathds{Z}_n$, $i \neq 0$, such that $C^0$ contains neither arc $(i_0, i_1)$ nor $(i_1, i_0)$. Without loss of generality, suppose that $i_0$ appears before $i_1$ in $C^0$. Note that the sequence of vertices of $C^0$ is not decreasing in the first coordinate. The subdipath of $C^0$ with source $i_0$ and terminal $i_1$ does not contain the arc $(i_0, i_1)$, and hence contains at least one of $0_0$ and $0_1$. This implies that $C^0$ has a repeated vertex ($0_0$ or $0_1$) other than its endpoints, a contradiction. Therefore, the directed cycle $C^0$ contains at least $n$ horizontal arcs, and thus exactly $n$ horizontal arcs of $\vec{C}_n \wr K^*_2$. By a similar reasoning, we can show that $C^1$ contains the $n$ horizontal arcs that do not appear in $C^0$. 

It follows from the above property that $S=A(C^2)$ contains an even number of arcs of difference 1. In addition, we see that $(i_0, (i+1)_1) \in S$ if and only if $(i_1, (i+1)_0) \in S$. Similarly, we see that $(i_1, (i+1)_1) \in S$ if and only if $(i_0, (i+1)_0) \in S$. Consequently, the digraph $C^2$ is actually the disjoint union of two directed cycles of length $n$, a contradiction. \end{proof}

Next, we show that $\vec{C}_n \wr K^*_m$ is hamiltonian decomposable for all even $n$ and all $m \geqslant 3$. We address this problem in two stages. First, we construct a decomposition of $\vec{C}_n \wr K^*_m$ into directed hamiltonian cycles for all $m\geqslant 6$. The method used in the proof of Proposition \ref{prop:complete2} does not apply to the case $m \in \{3,4,5\}$. These three exceptions are addressed separately in Lemma \ref{lem:spec35}.

\begin{proposition}
\label{prop:complete2}
Let $n$ be an even integer and $m\geqslant 6$. The digraph $\vec{C}_n \wr K^*_m$ is hamiltonian decomposable. 
\end{proposition}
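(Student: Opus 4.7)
The digraph $\vec{C}_n\wr K^*_m$ has $nm(2m-1)$ arcs, so a hamiltonian decomposition must contain $2m-1$ cycles. I plan to build these as $m$ \emph{path-based} cycles---each one absorbing the horizontal arcs of a hamiltonian dipath of $K^*_m$ in every layer together with $n$ bridging level arcs---and $m-1$ \emph{vertical} cycles lying entirely between consecutive layers.

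For the path-based cycles, apply Corollary~\ref{cor:dipatheven} (for even $m$) or Theorem~\ref{thm:Til} (for odd $m\geqslant 7$) to obtain a decomposition $\{P_1,\ldots,P_m\}$ of $K^*_m$ into hamiltonian dipaths. Writing $s_k=s(P_k)$ and $t_k=t(P_k)$, Lemma~\ref{lem:disin} makes both endpoint maps bijections $\{1,\ldots,m\}\to\mathds{Z}_m$, so $\tau(t_k):=s_k$ defines a fixed-point-free permutation $\tau\in S_m$. Embedding $P_k$ identically into every $V_i$ and closing by the bridging arcs $(i_{t_k},(i+1)_{s_k})$ for $i\in\mathds{Z}_n$ yields a directed hamiltonian cycle $C^k$. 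The collection $\{C^1,\ldots,C^m\}$ is arc-disjoint, consumes every horizontal arc, and between consecutive layers uses precisely the $m$ level arcs prescribed by $\tau$.

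For the vertical cycles, using Notation~\ref{not:Sm} I seek $n$-tuples $(\sigma_0^k,\ldots,\sigma_{n-1}^k)\in S_m^n$ for $k\in\{1,\ldots,m-1\}$ satisfying (i) $\{\tau,\sigma_i^1,\ldots,\sigma_i^{m-1}\}$ is a sharply transitive family on $\mathds{Z}_m$ at every $i$, ensuring that the residual level arcs are correctly partitioned; and (ii) $T(\sigma_0^k\sigma_1^k\cdots\sigma_{n-1}^k)=1$ for every $k$, ensuring that each $2$-factor is a single directed hamiltonian cycle. With $\rho\colon x\mapsto x+1\pmod m$ the cyclic shift, the coset $\{\rho^j\tau:j\in\mathds{Z}_m\}$ is a canonical Latin family containing $\tau$, so setting $\sigma_i^k=\rho^{f(k,i)}\tau$ with $f(\cdot,i)$ a permutation of $\{1,\ldots,m-1\}$ automatically enforces~(i).

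\emph{Main obstacle.} The crux is condition~(ii): arrange the exponent table $f(k,i)$ so that every row product $\prod_{i=0}^{n-1}\rho^{f(k,i)}\tau$ is a single $m$-cycle in $S_m$. When $\gcd(m,n)=1$, the uniform choice $f(k,i)=k$ suffices provided $\rho^k\tau$ is an $m$-cycle; a cyclic relabeling of the dipaths produced in Step~1 can be used to secure this. When $\gcd(m,n)>1$, a uniform choice cannot produce $m$-cycle row-products and the exponents must vary between layers; since $n$ is even I would pair consecutive layers and choose the two exponents in each pair so that the pair-product $\rho^{f(k,2j)}\tau\rho^{f(k,2j+1)}\tau$ has a controllable cycle structure. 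The hypothesis $m\geqslant 6$ gives enough room in $S_m$ to realise an $m$-cycle across all $m-1$ rows simultaneously. Once the exponent table is in hand, the $m$ path-based and $m-1$ vertical cycles together form the desired hamiltonian decomposition.
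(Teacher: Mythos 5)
Your first stage is sound: embedding each hamiltonian dipath $P_k$ identically into every layer and closing up with the bridging arcs $(i_{t_k},(i+1)_{s_k})$ does give $m$ pairwise arc-disjoint directed hamiltonian cycles that use every horizontal arc exactly once and, between consecutive layers, exactly the $m$ level arcs prescribed by the fixed-point-free permutation $\tau$ (Lemma~\ref{lem:disin} makes the endpoint maps bijections). The genuine gap is the second stage. Condition~(ii) --- choosing the exponent table $f(k,i)$ so that every row product $\rho^{f(k,0)}\tau\,\rho^{f(k,1)}\tau\cdots\rho^{f(k,n-1)}\tau$ is a single $m$-cycle --- is precisely the hard combinatorial core of the problem, and you do not establish it. Your uniform choice $f(k,i)=k$ yields $(\rho^{k}\tau)^{n}$, which is never an $m$-cycle when $\gcd(m,n)>1$ (so it always fails for even $m$, since $n$ is even), and even when $\gcd(m,n)=1$ it requires all $m-1$ translates $\rho^{k}\tau$ to be $m$-cycles simultaneously; a ``cyclic relabeling of the dipaths'' cannot secure this, because $\tau$ is dictated by the path decomposition and is in general not a power of $\rho$, so neither the translates nor the paired products you describe have controllable cycle structure. ``Enough room in $S_m$'' is not an argument. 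Since constructing these $m-1$ vertical hamiltonian cycles is essentially the whole difficulty, the proof is incomplete as written.

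The paper sidesteps this obstacle with a different first stage: because $n$ is even, each cycle $C^{j}$ traverses $P_j$ forward in even-indexed layers and its reversal in odd-indexed layers, so the only non-horizontal arcs it uses are arcs of difference $0$. The residual digraph is then exactly $\vec{C}_n\times K^*_m$, whose hamiltonian decomposability for even $n\geqslant 4$ and $m\geqslant 6$ is the published Theorem~\ref{thm:cat}; the case $n=2$ (where Theorem~\ref{thm:cat} does not apply, and which your write-up also leaves untreated) is handled separately via $\vec{C}_2\wr K^*_m\cong K^*_{2m}$. To salvage your bridging-by-$\tau$ construction you would need an analogue of Theorem~\ref{thm:cat} for the complement of an arbitrary fixed-point-free permutation, which is not easier than the problem you are trying to solve.
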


\begin{proof} First, we consider the case $n=2$. In that case, we see that $\vec{C}_2 \wr K^*_m\cong K^*_{2m}$. Since $m \geqslant 6$, the digraph $K^*_{2m}$ is hamiltonian decomposable. 

Next, we consider the case $n \geqslant 4$. By Corollary \ref{cor:dipatheven} and Theorem \ref{thm:Til} there exists a decomposition $D=\{P_1, P_2, \ldots, P_m\}$  of $K^*_m$ into hamiltonian dipaths. For each $P_j \in D$, assume that $P_j=v_1^j\, v_2^j \cdots v_m^j$. Using $D$, we can obtain a second decomposition of $K^*_m$ into directed hamiltonian dipaths, $D'=\{P'_1, P'_2, \ldots, P'_m\}$, where $P'_j=v_m^j\, v_{m-1}^j\cdots v_1^j$. 

For each $P_j \in D$, we then construct the following directed cycle of $\vec{C}_n \wr K^*_m$:

\begin{center}
$C^j=0_{v^j_1}\, 0_{v^j_2}\cdots 0_{v^j_m}\, 1_{v^j_m}\, 1_{v^j_{m-1}}\, 1_{v^j_{m-2}}\cdots 1_{v^j_1}\, 2_{v^j_1}\, 2_{v^j_2}\, 2_{v^j_3}\cdots\, (n-1)_{v^j_2}\, (n-1)_{v^j_1} \,  0_{v^j_1}$.
\end{center}

\noindent The directed cycle  $C^j$ is in fact a directed hamiltonian cycle of $\vec{C}_n \wr K^*_m$. The directed cycle $C^j$ contains the following arcs of difference 0: 

\begin{center}
$\{(0_{v^j_m}, 1_{v^j_m}), (1_{v^j_1}, 2_{v^j_1}), (2_{v^j_m}, 3_{v^j_m}), (3_{v^j_1}, 4_{v^j_1}), \ldots, ((n-1)_{v^j_1}, 0_{v^j_1})\}$. 
\end{center}

\noindent By Lemma \ref{lem:disin}, $v^j_1$ is the source of exactly one dipath in $D$, and $v^j_m$ is the terminal of exactly one dipath in $D$; likewise for $D'$ in which $v^j_m$ and $v^j_1$ will appear as the source and terminal, respectively, of exactly one dipath in $D'$. It follows that each arc of difference 0 appears exactly once in $F=\{C^1, C^2, \ldots, C^m\}$.  Furthermore, since $D$ and $D'$ are decomposition of $K^*_m$ into hamiltonian dipaths, each horizontal arc of $\vec{C}_n \wr K^*_m$ also appears exactly once in $F$. Otherwise, no arc of difference $d>0$ appears in $F$. As a result, we have that $\{C^1, C^2, \ldots, C^m, \vec{C}_n \times K^*_m\}$ is a decomposition of $\vec{C}_n \wr K^*_m$. By Theorem \ref{thm:cat}, the digraph $\vec{C}_n \times K^*_m$ admits a decomposition $F$ into directed hamiltonian cycles when $m \geqslant 6$. Since $\vec{C}_n \times K^*_m$ is a spanning subdigraph of $\vec{C}_n \wr K^*_m$, each of the $m-1$ directed hamiltonian cycles in $F$ is also directed hamiltonian cycles of $\vec{C}_n \wr K^*_m$. The set $\{C^1, C^2, \ldots, C^m\} \cup F$ is a decomposition of $\vec{C}_n \wr K^*_m$ into directed hamiltonian cycles. \end{proof}

Three small cases are omitted in the statement of Proposition \ref{prop:complete2}, specifically the cases in which $m \in \{3,4,5\}$. The construction given in the proof of Proposition \ref{prop:complete2} does not work for these small cases for one of two reasons. If $m \in \{3,5\}$,  the digraph $K^*_m$ does not admit a decomposition into hamiltonian dipaths \cite{Berm2}.  If $m=4$, Theorem \ref{thm:cat} does not apply to the digraph $\vec{C}_n \times K^*_m$. Note that $K^*_4$ is not hamiltonian decomposable, as shown in \cite{Berm2}, and thus needs not be considered with regards to Conjecture \ref{conj:main}. However, we address this special case for completeness of results. 

\begin{lemma}
\label{lem:spec35}
Let $n$ be an even integer and $m \in \{3,4,5\}$. The digraph $\vec{C}_n \wr K^*_m$ is hamiltonian decomposable if and only if $(n, m) \neq (2,3)$. 
\end{lemma}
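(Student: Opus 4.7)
The plan splits along the value of $n$. For $n=2$, the isomorphism $\vec{C}_2 \wr K^*_m \cong K^*_{2m}$ is immediate from the definition of the wreath product. The known characterization of hamiltonian decomposability of $K^*_r$ (it holds exactly when $r\notin\{4,6\}$) then resolves all three cases: $(n,m)=(2,3)$ yields $K^*_6$, which is not hamiltonian decomposable, giving the stated exception; for $(2,4)$ and $(2,5)$ we obtain $K^*_8$ and $K^*_{10}$, both hamiltonian decomposable.

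For $n\geq 4$ even and $m\in\{3,4,5\}$ I would construct $2m-1$ arc-disjoint directed hamiltonian cycles of $\vec{C}_n\wr K^*_m$ explicitly, case by case. The method of Proposition \ref{prop:complete2} is unavailable here: $K^*_m$ has no hamiltonian dipath decomposition when $m\in\{3,5\}$, and Theorem \ref{thm:cat} excludes $m=4$. A natural template is to mix two families of cycles. ``Vertical'' cycles use only between-level arcs and are described by $n$-tuples $(\sigma_0,\ldots,\sigma_{n-1})\in S_m^n$ via Notation \ref{not:Sm}, with the hamiltonicity requirement $T(\sigma_0\sigma_1\cdots\sigma_{n-1})=1$. ``Path-and-jump'' cycles absorb the horizontal arcs: since $K^*_m$ decomposes into directed cycles for each $m\in\{3,4,5\}$ (two triangles for $m=3$; two $4$-cycles plus two digons for $m=4$; four $5$-cycles for $m=5$), one can traverse a directed cycle of $K^*_m$ inside a chosen $V_i$, exit via a chosen between-level arc, cross the remaining $n-1$ levels on a short path, and return to $V_i$; rotating the chosen level through $\mathds{Z}_n$ would then sweep out all horizontal arcs of that $K^*_m$-cycle.

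The principal obstacle is bookkeeping: one must check that every horizontal arc and every arc of every difference $d\in\mathds{Z}_m$ is used exactly once across the $2m-1$ cycles, while also verifying the product-of-permutations condition $T(\sigma_0\cdots\sigma_{n-1})=1$ for each vertical cycle. The tight arc-count balance makes this delicate, especially for $m=4$, where $K^*_4$ is itself not hamiltonian decomposable; the wreath product rescues hamiltonicity only through careful interleaving of the two digons with the vertical structure. I would expect the proof to construct explicit base decompositions for the smallest relevant $n$ (for instance $n=4$) in each of $m\in\{3,4,5\}$, and then extend to general even $n\geq 4$ by an induction that appends two extra levels with a permutation $\pi$ and its inverse so as to preserve the hamiltonicity of each vertical cycle while absorbing the newly introduced horizontal and between-level arcs.
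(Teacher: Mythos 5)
Your reduction of the case $n=2$ is correct and matches the paper: $\vec{C}_2 \wr K^*_m \cong K^*_{2m}$, and the classification of hamiltonian decomposable complete symmetric digraphs disposes of $(2,3)$, $(2,4)$, $(2,5)$ exactly as stated. The arc count $nm(2m-1) = (2m-1)\cdot nm$ showing that a hamiltonian decomposition must consist of $2m-1$ cycles is also right, as is your diagnosis of why the method of Proposition \ref{prop:complete2} fails for $m\in\{3,4,5\}$.

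However, for $n\geq 4$ the proposal is a plan rather than a proof, and the plan is where essentially all of the content of this lemma lives. The paper's proof consists of explicit, case-by-case constructions: for each $m\in\{3,4,5\}$ it exhibits a collection of base dipaths spanning the first few levels, together with two-level ``extension'' dipaths for each even $i$ beyond the base, and concatenates them into $2m-1$ pairwise arc-disjoint directed hamiltonian cycles. You state that you ``would construct'' these cycles and correctly anticipate the base-plus-extension architecture, but you do not produce any of the dipaths, nor verify arc-disjointness or coverage, so the claim for $n\geq 4$ is unsupported. Moreover, the specific template you propose does not work as described. A ``vertical'' cycle given by an $n$-tuple of permutations uses only arcs of the $n$ differences and no horizontal arcs; but the $nm(m-1)$ horizontal arcs must be distributed among the $2m-1$ cycles, and a counting argument on runs shows that every directed hamiltonian cycle of $\vec{C}_n\wr K^*_m$ with any horizontal arcs must contain the same number of horizontal arcs with both endpoints in $V_i$ for every $i$ (cf.\ the argument of Lemma \ref{lem:hor}); your ``path-and-jump'' cycle, which traverses a directed cycle of $K^*_m$ inside a single chosen level $V_i$ and crosses the remaining $n-1$ levels on a path meeting each in one vertex, violates this and cannot be (a piece of) a hamiltonian cycle without substantial further interleaving that you do not specify. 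Indeed, none of the cycles in the paper's construction is purely vertical: each of the $2m-1$ cycles mixes horizontal and level-advancing arcs in every level. So the $n\geq 4$ half of the lemma remains unproved in your write-up.
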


\begin{proof} We consider three cases, one for each $m\in \{3,4,5\}$.

\noindent \underline{Case 1}: $m=3$. Since $\vec{C}_2\wr K^*_3=K^*_6$,  we see that $\vec{C}_2\wr K^*_3$ is not hamiltonian decomposable. 

Conversely, let $n\geqslant 4$. We begin by constructing 9 dipaths of $\vec{C}_n \wr K^*_3$ as follows: 

\smallskip
{\centering
  $ \displaystyle
    \begin{aligned} 
&X_0=0_2\, 0_0\, 0_1\, 1_0\,1_1\, 1_2\, 2_2\, 2_1\, 2_0\, 3_0\, 3_2\, 3_1\,4_2; &&X_3=0_1\, 0_2\, 1_2\,2_1\,3_0\, 4_0;\, &&X_6=0_2\, 1_1\, 2_2\, 3_0\,3_1\,4_1;\\
&X_1=0_1\, 1_2\, 1_1\,2_1\, 2_2\, 3_1\, 4_0;&& X_4=0_0\, 1_1\, 1_0\,2_2\,2_0\,3_1\, 3_2\, 4_1;&&X_7=0_2\, 0_1\, 1_1\,2_0\,2_2\,3_2\, 4_0;\\
&X_2=0_0\, 0_2\, 1_0\,2_0\,3_2\, 3_0\,4_1; &&X_5=0_1\, 0_0\, 1_0\,1_2\,2_0\,2_1\,3_2\,4_2;  &&X_8=0_0\, 1_2\, 1_0\,2_1\,3_1\,3_0\,4_2.\\
     \end{aligned}
  $ 
\par}
\smallskip 

 \noindent Refer to Figure \ref{fig:Xi} for an illustration of the 9 dipaths above. 

 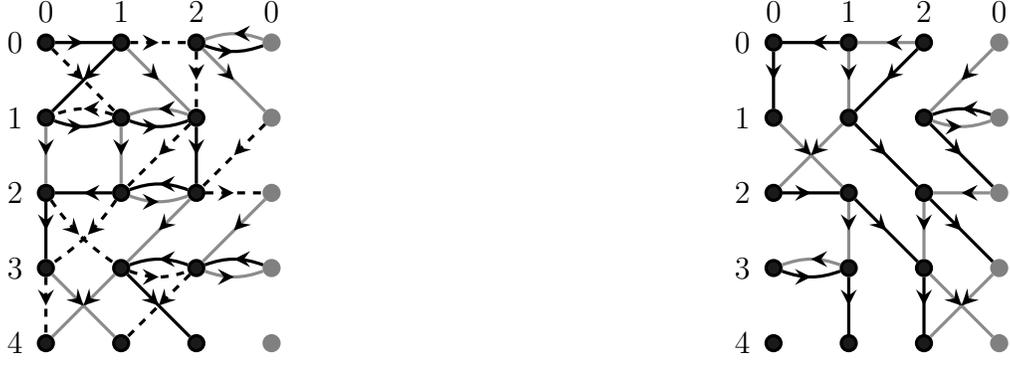
\begin{figure}[htpb!]
 \begin{center}
 \begin{subfigure}[c]{0.45 \textwidth}
 \begin{center}
 \begin{tikzpicture}[
  very thick,
  every node/.style={circle,draw=black,fill=black!90}, inner sep=2]

  \node (x0) at (0.0,7.0) [label=above:$0$] [label=left:$0$] {};
  \node (x1) at (1.0,7.0)[label=above:$1$]  {};
  \node (x2) at (2.0,7.0) [label=above:$2$] {};
  \node (x3) at (3.0,7.0) [draw=gray, fill=gray, label=above:$0$] {};
  \node (y0) at (0.0,6.0) [label=left:$1$]  {};
  \node (y1) at (1.0,6.0) {};
  \node (y2) at (2.0,6.0) {};
  \node (y3) at (3.0,6.0) [draw=gray, fill=gray] {};
   \node (z0) at (0.0,5.0) [label=left:$2$]  {};
  \node (z1) at (1.0,5.0) {};
  \node (z2) at (2.0,5.0) {};
  \node (z3) at (3.0,5.0) [draw=gray, fill=gray] {};
  \node (a0) at (0.0,4.0) [label=left:$3$]  {};
  \node (a1) at (1.0,4.0) {};
  \node (a2) at (2.0,4.0) {};
  \node (a3) at (3.0,4.0) [draw=gray, fill=gray] {};
  \node (b0) at (0.0,3.0) [label=left:$4$]  {};
  \node (b1) at (1.0,3.0) {};
  \node (b2) at (2.0,3.0) {};
  \node (b3) at (3.0,3.0) [draw=gray, fill=gray] {};

\path [very thick, draw=color1, postaction={very thick, on each segment={mid arrow}}]
	(x0) to (x1)
	(x2) to [bend right=20, near start]   (x3)
	(x1) to  (y0)

	(y0) to [bend right=20] (y1)
	(y1) to [bend right=20] (y2)
	(y2) to (z2)
	
	(z1) to (z0)
	(z2) to [bend right=20] (z1)
	(z0) to (a0)
	(a3) to [bend right=20] (a2)
	(a2) to  [bend right=20]  (a1)
	(a1) to (b2)
	;
\path [very thick, draw=color2, postaction={very thick, on each segment={mid arrow}}]
	(x3) to [bend right=20]  (x2)
	(x2) to (y3)
	(y0) to  (z0)

	(z3) to  (a2)
	(a2) to [bend right=20] (a3)
	(a0) to (b1)
	
	(x1) to (y2)
	(y2) to [bend right=20] (y1)
	(y1) to (z1)
	(z1) to [bend right=20] (z2)
	(z2) to (a1)
	(a1) to (b0)
	;
 \path [very thick, dashed, draw=color1, postaction={very thick, on each segment={mid arrow}}]
	(x1) to (x2)
	(x2) to (y2)
	
	(y2) to  (z1)
	(z1) to [bend left=10] (a0)
	
	(a0) to  (b0)
	(x0) to (y1)
	
	(y1) to [bend right=20]    (y0)
	(y3) to (z2)
	
	(z2) to (z3)
	
	(z0) to [bend right=10, near end] (a1)
	(a1) to  [bend right=20] (a2)
	(a2) to (b1);
	
\end{tikzpicture}
\end{center}
\caption{Illustration of  $X_0$ (black), $X_1$ and $X_2$ (grey), and  $X_3$ and $X_4$ (dashed).}
\end{subfigure}
\hfill
  \begin{subfigure}[c]{0.45 \textwidth}
 \begin{center}
 \begin{tikzpicture}[
  very thick,
  every node/.style={circle,draw=black,fill=black!90}, inner sep=2]

  \node (x0) at (0.0,7.0) [label=above:$0$] [label=left:$0$] {};
  \node (x1) at (1.0,7.0)[label=above:$1$]  {};
  \node (x2) at (2.0,7.0) [label=above:$2$] {};
  \node (x3) at (3.0,7.0) [draw=gray, fill=gray, label=above:$0$] {};
  \node (y0) at (0.0,6.0) [label=left:$1$]  {};
  \node (y1) at (1.0,6.0) {};
  \node (y2) at (2.0,6.0) {};
  \node (y3) at (3.0,6.0) [draw=gray, fill=gray] {};
   \node (z0) at (0.0,5.0) [label=left:$2$]  {};
  \node (z1) at (1.0,5.0) {};
  \node (z2) at (2.0,5.0) {};
  \node (z3) at (3.0,5.0) [draw=gray, fill=gray] {};
  \node (a0) at (0.0,4.0) [label=left:$3$]  {};
  \node (a1) at (1.0,4.0) {};
  \node (a2) at (2.0,4.0) {};
  \node (a3) at (3.0,4.0) [draw=gray, fill=gray] {};
  \node (b0) at (0.0,3.0) [label=left:$4$]  {};
  \node (b1) at (1.0,3.0) {};
  \node (b2) at (2.0,3.0) {};
  \node (b3) at (3.0,3.0) [draw=gray, fill=gray] {};

\path [very thick, draw=color1, postaction={very thick, on each segment={mid arrow}}]
	(x1) to (x0)
	(x0) to (y0)
	(y3) to [bend right=20]   (y2)

	(y2) to (z3)
	(z0) to (z1)
	(z1) to (a2)
	
	(a2) to (b2)
	(x2) to (y1)
	(y1) to (z2)
	(z2) to (a3)
	(a0) to [bend right=20] (a1)
	(a1) to (b1)
	;
\path [very thick, draw=color2, postaction={very thick, on each segment={mid arrow}}]
	(x2) to (x1)
	(x1) to (y1)
	(y1) to  (z0)

	(z3) to  (z2)
	(z2) to (a2)
	(a2) to (b3)
	
	(x3) to (y2)
	(y2) to [bend right=20] (y3)
	(y0) to (z1)
	(z1) to (a1)
	(a1) to [bend right=20]  (a0)
	(a3) to (b2);
\end{tikzpicture}
\end{center}
\caption{Illustration of  $X_5$ and $X_6$ (black), and $X_7$ and $X_8$ (grey).}
\end{subfigure}
\end{center}
\caption{Key dipaths in the construction of a decomposition of $\vec{C}_n \wr K^*_3$ into directed hamiltonian cycles.}
\label{fig:Xi}
\end{figure}

\noindent {SUBCASE 1.1:} $n=4$. Then $4_2=0_2$, meaning that $C^0=X_0$ is a directed cycle of length $12$. Furthermore, we obtain the following four directed cycles of length 12 by concatenating the dipaths above as follows:  $C^1=X_1X_2$, $C^2=X_3X_4$, $C^3=X_5X_6$, and $C^4=X_7X_8$. The set $D=\{C^0, C^1, C^2, C^3, C^4\}$ is the a directed hamiltonian decomposition of $\vec{C}_4 \wr K^*_3$. 
 
\noindent {SUBCASE 1.2:}  $n \geqslant 6$. Let $\mathds{I}= \{4,6, \ldots, n-2\}$. For each $i \in \mathds{I}$, we construct the following set of 9 dipaths:

 \begin{multicols}{2}
{\centering
  $ \displaystyle
    \begin{aligned} 
&M^i=i_2\, i_0\, i_1\,(i+1)_2\, (i+1)_1\, (i+1)_0\, (i+2)_2 ;\\
&N^i_0=i_0\, i_2\, (i+1)_1\, (i+2)_0;\\
&N^i_1 = i_1\, (i+1)_0\, (i+1)_2\, (i+2)_1;\\
&O^i_0=i_0\, (i+1)_1\, (i+1)_2\, (i+2)_0;\\
&O^i_1=i_1\, i_2\, (i+1)_0\, (i+2)_1\,;\\
     \end{aligned}
  $ 
\par}

{\centering
  $ \displaystyle
    \begin{aligned} 
&P^i_0=i_2\, (i+1)_2\, (i+2)_2 ;\\
&P^i_1=i_1\, i_0\, (i+1)_0\,(i+1)_1\,(i+2)_1  ;\\
& Q^i_0=i_0\, (i+1)_2\, (i+1)_0\, (i+2)_0;\\
&Q^i_1=i_2\, i_1\, (i+1)_1\, (i+2)_2.\\
     \end{aligned}
  $ 
  \par}
  \end{multicols}

\noindent \noindent Refer to Figure \ref{fig:ext} for an illustration of these 9 dipaths when $i=4$. It is straightforward to verify that, for each $i \in  \mathds{I}$, dipaths in $\{M^i, N^i_0, N^i_1, \ldots, Q^i_1\}$ are pairwise arc-disjoint.

 \begin{figure}[htpb!]
 \begin{center}
 \begin{subfigure}[c]{0.45 \textwidth}
 \begin{center}
 \begin{tikzpicture}[
  very thick,
  every node/.style={circle,draw=black,fill=black!90}, inner sep=2]

  \node (x0) at (0.0,7.0) [label=above:$0$] [label=left:$4$] {};
  \node (x1) at (1.0,7.0)[label=above:$1$]  {};
  \node (x2) at (2.0,7.0) [label=above:$2$] {};
  \node (x3) at (3.0,7.0) [draw=gray, fill=gray, label=above:$0$] {};
  \node (y0) at (0.0,6.0) [label=left:$5$]  {};
  \node (y1) at (1.0,6.0) {};
  \node (y2) at (2.0,6.0) {};
  \node (y3) at (3.0,6.0) [draw=gray, fill=gray] {};
   \node (z0) at (0.0,5.0) [label=left:$6$]  {};
  \node (z1) at (1.0,5.0) {};
  \node (z2) at (2.0,5.0) {};
  \node (z3) at (3.0,5.0) [draw=gray, fill=gray] {};

\path [very thick, draw=color2, postaction={very thick, on each segment={mid arrow}}]
	(x1) to (y0)
	(y3) to (y2)
	(y2) to  (z1)

	(x3) to [bend right=20] (x2)
	(x2) to (y1)
	(y1) to (z0);
\path [very thick, draw=color1, postaction={very thick, on each segment={mid arrow}}]
	(x2) to [bend right=20]   (x3)
	(x0) to  (x1)
	(x1) to  (y2)

	(y2) to [bend right=20] (y1)
	(y1) to (y0)
	(y3) to (z2);
	;
 \path [very thick, dashed, draw=color1, postaction={very thick, on each segment={mid arrow}}]
	(x1) to (x2)
	(x2) to (y3)
	
	(y0) to  (z1)
	(x0) to (y1)
	
	(y1) to [bend right=20]  (y2)
	(y2) to (z3);
	
\end{tikzpicture}
\end{center}
\caption{Illustration of  $M^4$ (black), $N_0^4$ and $N_1^4$ (dashed), and  $O^4_0$ and $O^4_1$ (grey).}
\end{subfigure}
\hfill
  \begin{subfigure}[c]{0.45 \textwidth}
 \begin{center}
 \begin{tikzpicture}[
  very thick,
  every node/.style={circle,draw=black,fill=black!90}, inner sep=2]

  \node (x0) at (0.0,7.0) [label=above:$0$] [label=left:$4$] {};
  \node (x1) at (1.0,7.0)[label=above:$1$]  {};
  \node (x2) at (2.0,7.0) [label=above:$2$] {};
  \node (x3) at (3.0,7.0) [draw=gray, fill=gray, label=above:$0$] {};
  \node (y0) at (0.0,6.0) [label=left:$5$]  {};
  \node (y1) at (1.0,6.0) {};
  \node (y2) at (2.0,6.0) {};
  \node (y3) at (3.0,6.0) [draw=gray, fill=gray] {};
     \node (z0) at (0.0,5.0) [label=left:$6$]  {};
  \node (z1) at (1.0,5.0) {};
  \node (z2) at (2.0,5.0) {};
  \node (z3) at (3.0,5.0) [draw=gray, fill=gray] {};

\path [very thick, draw=color1, postaction={very thick, on each segment={mid arrow}}]
	(x1) to (x0)
	(x0) to (y0)
	(y0) to (y1)

	(y1) to (z1)
	(x2) to (y2)
	(y2) to (z2)
	;
\path [very thick, draw=color2, postaction={very thick, on each segment={mid arrow}}]
	(x2) to (x1)
	(x1) to (y1)
	(y1) to  (z2)

	(x3) to  (y2)
	(y2) to (y3)
	(y3) to (z3);
\end{tikzpicture}
\end{center}
\caption{Illustration of  $P_0^4$ and $P_1^4$ (black), and $Q_0^4$ and $Q_1^4$ (grey).}
\end{subfigure}
\end{center}
\caption{Key dipaths in the construction of a decomposition of $\vec{C}_n \wr K^*_3$ into directed hamiltonian cycles for $n \geqslant 6$.}
\label{fig:ext}
\end{figure}

We now use these dipaths to construct five directed hamiltonian cycles of $\vec{C}_n \wr K^*_3$. First, observe that each $M^i$ is a dipath of length 6. In addition, note that $t(M^i)=s(M^{i+2})$ and that $M^i$ and $M^{i+2}$ have no other vertices in common. If $|i-j|>2$, then dipaths $M^i$ and $M^j$ are vertex-disjoint. Lastly, we see that $s(M^4)=t(X_0)=4_2$, $t(M^{n-2})=s(X_0)=0_2$, and that $X_0$ has no other vertices in common with dipaths in $\{M^4, M^6, \ldots, M^{n-2}\}$. This means that we can construct the following directed cycle of length $3n$: $C^0=X_0M^4M^6M^8\cdots M^{n-2}$. 

By applying a similar reasoning, we can construct the following four directed cycles of length $3n$: 
 
\smallskip
{\centering
  $ \displaystyle
    \begin{aligned} 
&C^1=X_1N_0^4N_0^6\cdots N_0^{n-2}X_2N_1^4N_1^6\cdots N_1^{n-2};&&C^3=X_5P^4_0P^6_0\cdots P^{n-2}_0X_6P^4_1P^6_1\cdots P^{n-2}_1;\\
&C^2=X_3O^4_0O^6_0\cdots O^{n-2}_0X_4O^4_1O^6_1\cdots O^{n-2}_1;&&C^4=X_7Q^4_0Q^6_0\cdots Q^{n-2}_0X_8Q^4_1Q^6_1\cdots Q^{n-2}_1.
     \end{aligned}
  $ 
\par}
\smallskip 

\noindent  Observe that the dipaths in  $\cup_{i=4}^{n-2}\{M^i, N^i_0, N^i_1, \ldots, Q^i_1\}$ are pairwise arc-disjoint. Since the dipaths in $\{X_0, X_1, \ldots, X_8\}$ are also pairwise arc-disjoint, this implies that the directed cycles in $D=\{C^0, C^1, C^2, C^3, C^4\}$ are pairwise arc-disjoint. Consequently, the set $D$ is a directed hamiltonian decomposition of $\vec{C}_n \wr K^*_3$. 

\noindent \underline{Case 2}: $m=4$. Observe that, if $n=2$, then $\vec{C}_n\wr K^*_4=K^*_8$. The digraph $K^*_8$ is hamiltonian decomposable \cite{Tilson}. 

We now consider the case $n \geqslant 4$. We begin by constructing three directed cycles of length $4n$ as follows:

\smallskip
{\centering
  $ \displaystyle
    \begin{aligned} 
&C^0=0_3\,0_2\,0_1\,0_0\, 1_0\, 1_1\, 1_2\,1_3\, 2_3\, 2_2\, 2_1 \cdots (n-1)_0\, (n-1)_1\,(n-1)_2\,(n-1)_3\, 0_3;\\
&C^1=0_1\,0_3\,0_0\,0_2\, 1_2\, 1_0\, 1_3\,1_1\, 2_1\, 2_3\, 2_0 \cdots (n-1)_2\, (n-1)_0\,(n-1)_3\,(n-1)_1\,0_1;\\
&C^2=0_2\,0_0\,0_3\,0_1\, 1_1\, 1_3\, 1_0\,1_2\, 2_2\, 2_0\, 2_3 \cdots (n-1)_1\, (n-1)_3\,(n-1)_0\,(n-1)_2\,0_2.\\
    \end{aligned}
  $ 
\par}
\smallskip

\noindent Observe that $C^0$, $C^1$, and $C^2$ are pairwise arc-disjoint. Let $S=A(C^0)\cup A(C^1)\cup A(C^2)$. All arcs of difference 0 are contained in $S$ except for arcs in the set  $\{(i_3, (i+1)_3), ((i+1)_0, (i+2)_0) \ | \ i \in \mathds{Z}_n\ \textrm{and $i$ even}\}$. 

\noindent Furthermore, the set $S$ also contains all horizontal arcs except for arcs in the set 

\smallskip
{\centering
  $ \displaystyle
    \begin{aligned} 
&\{(i_0, i_1), (i_1, i_2), (i_2, i_3), ((i+1)_3, (i+1)_2), ((i+1)_2, (i+1)_1), ((i+1)_1, (i+1)_0) \ | \ i \in \mathds{Z}_n,\ \textrm{$i$ even} \}.
    \end{aligned}
  $ 
\par}
\smallskip
\noindent Lastly, note that $S$ does not contain any arc of difference $d>0$.

We now construct four directed hamiltonian cycles by first constructing the following 13 dipaths:

\vspace{-2mm}
\begin{multicols}{4}
{\centering
  $ \displaystyle
    \begin{aligned} 
&U_0=0_1\, 0_2\, 0_3\, 1_2\, 1_1\, 1_0\,2_0\,;\\
&U_1=0_0\, 1_3\, 2_1; \\
&W_0=0_0\, 0_1\, 1_3\,1_2\,2_3;\\
& W_1=0_3\, 1_0\, 2_2; 
    \end{aligned}
  $ 
\par}
{\centering
  $ \displaystyle
    \begin{aligned} 
    &W_2=0_2\, 1_1\, 2_0;\\ 
    &X_0=0_0\,1_1\,2_3;\\
    &X_1=0_3\, 1_3\, 2_2; \\   
    &X_2=0_2\, 1_0\, 2_1;\\ 
    \end{aligned}
  $ 
\par}
{\centering
  $ \displaystyle
    \begin{aligned} 
    &X_3=0_1\, 1_2\, 2_0; \\    
    &Y_0=0_0\,1_2\,2_1; \\
    &Y_1=0_1\, 1_0\, 2_3; \\
    &Y_2=0_3\, 1_1\, 2_2
  \end{aligned}
  $ 
\par}

{\centering
  $ \displaystyle
    \begin{aligned} 
&Y_3=0_2\, 1_3\, 2_0.
  \end{aligned}
  $ 
\par}
\end{multicols}

Let $\mathds{I}=\{2,4, 6, \ldots, n-2\}$. For each $i\in \mathds{I}$, we construct the following set of 13 dipaths:

\vspace{-2mm}
\begin{multicols}{3}
{\centering
  $ \displaystyle
    \begin{aligned} 
&M^i_0=i_0\, (i+1)_3\, (i+2)_0;\\  
&M^i_1=i_1\, i_2\, i_3\, (i+1)_2\,(i+1)_1\, (i+1)_0\, (i+2)_1;\,\\
&N^i_0=i_3\, (i+1)_3\, (i+1)_2\, (i+2)_3;\\
&N^i_1=i_2\, (i+1)_1\, (i+2)_2;\\  
&N^i_2=i_0\, i_1\, (i+1)_0\,(i+2)_0;\\
    \end{aligned}
  $ 
\par}
\hspace{1.3cm}
{
  $ \displaystyle
    \begin{aligned} 
&O^i_0=i_3\, (i+1)_0\, (i+2)_3;  \\
&O^i_1=i_2\, (i+1)_3\, (i+2)_2;\, \\
&O^i_2=i_1\, (i+1)_2\, (i+2)_1;   \\
&O^i_3=i_0\, (i+1)_1\, (i+2)_0;\,\\
&P^i_0=i_1\, (i+1)_3\, (i+2)_1; \\
    \end{aligned}
  $ 
\par}
\hspace{-0.5cm}
{\centering
  $ \displaystyle
    \begin{aligned}
&P^i_1=i_3\, (i+1)_1\, (i+2)_3 ;\\
&P^i_2=i_2\, (i+1)_0\, (i+2)_2;  \\
&P^i_3=i_0\, (i+1)_2\, (i+2)_0.\\
    \end{aligned}
  $ 
\par}
\end{multicols}

\noindent We then construct the following four directed cycles of length $4n$:

\smallskip
{\centering
  $ \displaystyle
    \begin{aligned} 
&C^4=U_0M^2_0M^4_0\cdots M^{n-2}_0 U_1 M^2_1M^4_1\cdots M^{n-2}_1;\\    
&C^5=W_0N^2_0N^4_0\cdots N^{n-2}_0 W_1 N^2_1N^4_1\cdots N^{n-2}_1W_2N^2_2N^4_2\cdots N^{n-2}_2;\\
&C^6=X_0O^2_0O^4_0\cdots O^{n-2}_0 X_1 O^2_1O^4_1\cdots O^{n-2}_1X_2O^2_2O^4_2\cdots O^{n-2}_2X_3O^2_3O^4_3\cdots O^{n-2}_3;\\
&C^7=Y_0P^2_0P^4_0\cdots P^{n-2}_0 Y_1 P^2_1P^4_1\cdots P^{n-2}_1Y_2P^2_2P^4_2\cdots P^{n-2}_2Y_3P^2_3P^4_3\cdots P^{n-2}_3.\\
    \end{aligned}
  $ 
\par}
\smallskip

\noindent We point out that $S'=A(C^3)\cup A(C^4)\cup A(C^5)\cup A(C^6)$ contains all horizontal arcs and all arcs of difference 0 that do not appear in $S$. Furthermore, each arc of difference $d>0$ appears exactly once in $\{C^3, C^4, C^5, C^6\}$. This means that $D=\{C^0, C^1, \ldots, C^6\}$ is a directed hamiltonian decomposition of $\vec{C}_n \wr K^*_4$. 

\noindent \underline{Case 3}: $m=5$. Observe that, if $n=2$, then $\vec{C}_n\wr K^*_5=K^*_{10}$. The digraph $K^*_{10}$ is hamiltonian decomposable \cite{Tilson}. 

We now construct a decomposition of $\vec{C}_n \wr K^*_5$ into 9 directed hamiltonian cycles for all even $n \geqslant 4$. First, we construct the following set of 25 dipaths:

\vspace{-3mm}
    \begin{multicols}{2}
{\centering
  $ \displaystyle
    \begin{aligned} 
&L_0=0_0\, 0_1\, 0_2\, 0_3\, 0_4\, 1_3\, 1_2\,1_1\, 1_0\, 1_4\, 2_0;\\
&L_1=0_4\, 0_3\, 0_2\, 0_1\, 0_0\, 1_4\, 1_0\,1_1\, 1_2\, 1_3\, 2_4;\\
&L_2=0_1\,0_3\,0_0\, 0_2\, 0_4\, 1_4\, 1_2\, 1_0\, 1_3\,1_1\, 2_1\,;\\
&L_3=0_3\, 0_1\, 0_4\, 0_2\, 0_0\, 1_0\, 1_2\,1_4\, 1_1\, 1_3\, 2_3;\\
\end{aligned}
  $ 
\par}
\vspace{0.5 ex}
{\centering
  $ \displaystyle
    \begin{aligned} 
&~U_0=0_4\, 0_0\, 1_1\,2_2; &&U_1= 0_2\, 1_3\, 1_4\, 2_3;\\
&~U_2=0_3\, 1_2\, 2_1; &&U_3=0_1\, 1_0\, 2_4;\\  
&~W_0=0_0\, 0_4\, 1_0\,2_1;&& W_1=0_1\, 1_2\, 2_3;\\
&~W_2=0_3\, 1_4\, 1_3\,2_2 ;&& W_3=0_2\, 1_1\, 2_0; \\
    \end{aligned}
  $ 
\par}

{\centering
  $ \displaystyle
    \begin{aligned} 
&X_0=0_4\, 0_1\, 1_1\,2_3; &&X_1=0_3\, 1_3\, 1_0\, 2_0;\\
&X_2=0_0\, 1_2\, 2_2\,; &&X_3=0_2\, 1_4\, 2_4; \\ 
&Y_0=0_0\, 0_3\, 1_0\,2_2; &&Y_1=0_2\, 1_2\, 2_4;\\
&Y_2=0_4\, 1_1\, 1_4\, 2_1;&& Y_3=0_1\, 1_3\, 2_0; \\
&Z_0=0_0\, 1_3\, 2_1\, ; && Z_1=0_1\, 1_4\, 2_2;\\ 
&Z_2=0_2\, 1_0\, 2_3 ;&& Z_3=0_3\, 1_1\, 2_4; \\ 
&Z_4=0_4\, 1_2\,2_0.&&\\
    \end{aligned}
  $ 
\par}

\end{multicols}
\vspace{-3mm}

\noindent Next, let $\mathds{I}=\{2,4, \ldots, n-2\}$. For each $i \in \mathds{I}$, we form the following set of four dipaths:

{\centering
  $ \displaystyle
    \begin{aligned} 
&M^i_0=i_0\, i_1\, i_2\, i_3\, i_4\, (i+1)_3\, (i+1)_2\,(i+1)_1\, (i+1)_0\, (i+1)_4\, (i+2)_0;\\
&M^i_1=i_4\, i_3\, i_2\, i_1\, i_0\, (i+1)_1\, (i+1)_2\,(i+1)_3\, (i+1)_4\, (i+1)_0\, (i+2)_4;\\
&M^i_2=i_1\,i_3\,i_0\, i_2\, i_4\, (i+1)_4\, (i+1)_2\, (i+1)_0\, (i+1)_3\,(i+1)_1\, (i+2)_1;\\
&M^i_3=i_3\, i_1\, i_4\, i_2\, i_0\, (i+1)_0\, (i+1)_2\,(i+1)_4\, (i+1)_1\, (i+1)_3\, (i+2)_3.\\
    \end{aligned}
  $ 
\par}

\noindent We then form the following four directed cycles of length $5n$:
 
\smallskip
{\centering
  $ \displaystyle
    \begin{aligned} 
&C^0=L_0M^2_0M^4_0M^6_0\cdots M^{n-2}_0; &&C^2=L_2M^2_2M^4_2M^6_2\cdots M^{n-2}_2;\\ 
& C^1=L_1M^2_1M^4_1M^6_1\cdots M^{n-2}_1; &&C^3=L_3M^3_1M^4_3M^6_3\cdots M^{n-2}_3. \\
    \end{aligned}
  $ 
\par}
\smallskip

\noindent Next, for each $i \in \mathds{I}$, we construct a set of 21 dipaths as follows:

\begin{multicols}{3}
{\centering
  $ \displaystyle
    \begin{aligned} 
 &N^i_0=i_2\, (i+1)_1\, (i+2)_2;\\
 &N^i_1=i_3\, (i+1)_2\, (i+2)_3; \\
& N^i_2=i_1\, (i+1)_0\, (i+2)_1; \\
&N^i_3=i_4\, i_0 (i+1)_4(i+1)_3(i+2)_4;\\
 & O^i_0=i_1\, (i+1)_2\, (i+2)_1; \\
&O^i_1=i_3\, (i+1)_4\, (i+2)_3;\\
& O^i_2=i_2\, (i+1)_3\, (i+2)_2; \\
     \end{aligned}
  $ 
\par}
\hspace{-7mm}
{
  $ \displaystyle
    \begin{aligned} 
  &O^i_3=i_0\, i_4 (i+1)_0 (i+1)_1 (i+2)_0;\,\\
& P^i_0=i_3 (i+1)_3 (i+1)_0(i+2)_3; \\
&P^i_1=i_0\, (i+1)_2\, (i+2)_0;\\
    & P^i_2=i_2\, (i+1)_4\, (i+2)_2; \\
       &P^i_3=i_4\, i_1\, (i+1)_1\,(i+2)_4,\\
    &Q^i_0=i_2\, (i+1)_2\, (i+2)_2; \\
&Q^i_1=i_4\, (i+1)_1\, (i+1)_4\, (i+2)_4 ;\\
    \end{aligned}
  $ 
\par}
\hspace{0.2cm}
{
  $ \displaystyle
    \begin{aligned} 
     & Q^i_2=i_1\, (i+1)_3\, (i+2)_1; \\
      &Q^i_3=i_0\, i_3\, (i+1)_0\,(i+2)_0;  \\
  & R^i_0=i_1\, (i+1)_4\, (i+2)_1; \\
&R^i_1=i_2\, (i+1)_0\, (i+2)_2 ; \\
& R^i_2=i_3\, (i+1)_1\, (i+2)_3; \\
& R^i_3=i_4\, (i+1)_2\, (i+2)_4;\\
&R^i_4=i_0\, (i+1)_3\, (i+2)_0. \\
    \end{aligned}
  $ 
\par}
\end{multicols}

\noindent We then form the following six directed cycle of length $5n$:

\smallskip
{\centering
  $ \displaystyle
    \begin{aligned} 
& C^4=U_0N^2_0N^4_0\cdots N^{n-2}_0 U_1N^2_1N^4_1\cdots N^{n-2}_1 U_2N^2_2N^4_2\cdots N^{n-2}_2 U_3N^2_3N^4_3\cdots N^{n-2}_3;\\
&C^5=W_0O^2_0O^4_0\cdots O^{n-2}_0 W_1O^2_1O^4_1\cdots O^{n-2}_1 W_2O^2_2O^4_2\cdots O^{n-2}_2 W_3O^2_3O^4_3\cdots O^{n-2}_3;\\ 
&C^6=X_0P^2_0P^4_0\cdots P^{n-2}_0 X_1P^2_1P^4_1\cdots P^{n-2}_1 X_2P^2_2P^4_2\cdots P^{n-2}_2 X_3P^2_3P^4_3\cdots P^{n-2}_3;\\
&C^7=Y_0Q^2_0Q^4_0\cdots Q^{n-2}_0 Y_1Q^2_1Q^4_1\cdots Q^{n-2}_1 Y_2Q^2_2Q^4_2\cdots Q^{n-2}_2 Y_3Q^2_3Q^4_3\cdots Q^{n-2}_3;\\
&C^8=Z_0R^2_0R^4_0\cdots R^{n-2}_0 Z_1R^2_1R^4_1\cdots R^{n-2}_1 Z_2R^2_2R^4_2\cdots R^{n-2}_2 Z_3R^2_3R^4_3\cdots R^{n-2}_3\\
&~~~~~~~~Z_4R_4^2R_4^4\cdots R_4^{n-2}.\\
    \end{aligned}
  $ 
\par}
\smallskip

It is routine to verify that the directed cycles in  $D=\{C^0, C^1, \ldots, C^8\}$ are pairwise-arc disjoint and thus, the set $D$ is a decomposition of $\vec{C}_n \wr K^*_5$ into directed hamiltonian cycles. \end{proof}

We conclude this section with the proof of Theorem \ref{thm:1}.

\noindent{\bf Proof of Theorem \ref{thm:1}:} Lemma \ref{lem:redCn}, in conjunction with Proposition \ref{prop:complete2} and Lemma \ref{lem:spec35}, implies sufficiency. Conversely, there exists exactly one strict hamiltonian decomposable digraph on two vertices, namely $G=\vec{C}_2=K^*_2$. This means that, if $(n,m) =(2,3)$, then $G\wr K^*_m=K^*_6$. It follows from \cite{Berm2} that $K^*_6$ is not hamiltonian decomposable.  $\square$

\section{Hamiltonian decomposition of $\vec{C}_n \wr\vec{C}_m$ with $m \geqslant 4$}\label{S:Cm}

We conduct our investigation in two stages. First, we consider the case $m$ even followed by the case $m$ odd. 

\begin{proposition}
\label{prop:neven}
Let $n\geqslant 4$ and $m\geqslant 6$ be even integers. The digraph $\vec{C}_n \wr \vec{C}_m$ is hamiltonian decomposable.
\end{proposition}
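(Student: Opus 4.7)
The plan is to construct an explicit decomposition of $\vec{C}_n \wr \vec{C}_m$ into $m+1$ arc-disjoint directed Hamiltonian cycles. Observe that $\vec{C}_n \wr \vec{C}_m$ has $nm(m+1)$ arcs, partitioned into $nm$ horizontal arcs (forming $n$ copies of $\vec{C}_m$) and $nm$ vertical arcs of difference $d$ for each $d \in \mathds{Z}_m$; since each directed Hamiltonian cycle contains $nm$ arcs, exactly $m+1$ arc-disjoint Hamiltonian cycles are required.

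My strategy is to exploit Theorem \ref{thm:cat}, which applies here since $n \geqslant 4$ is even and $m \geqslant 6$: the subdigraph $\vec{C}_n \times K^*_m$ of all nonzero-difference vertical arcs decomposes into $m-1$ directed Hamiltonian cycles $H_1, H_2, \ldots, H_{m-1}$. This accounts for $nm(m-1)$ of the arcs, leaving the $2nm$-arc subdigraph consisting of all horizontal arcs together with all difference-$0$ arcs. This residual subdigraph is precisely the Cartesian product $\vec{C}_n\,\square\,\vec{C}_m$, so the goal reduces to decomposing it into 2 directed Hamiltonian cycles $C^0$ and $C^1$.

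For $(n, m)$ pairs satisfying Keating's criterion \cite{Kea}---that is, there exist positive integers $s_1, s_2$ with $s_1 + s_2 = \gcd(n, m)$ and $\gcd(nm, s_1 s_2) = 1$---the residual subdigraph $\vec{C}_n\,\square\,\vec{C}_m$ is itself Hamiltonian decomposable, and the proof is complete. The main obstacle is when Keating's criterion fails, for example at $(n, m) = (4, 12)$ where $\gcd(n,m) = 4$ forces $s_1 s_2 \in \{3, 4\}$ with $\gcd(nm, s_1 s_2) > 1$. In such cases, I would allow $C^0$ and $C^1$ to incorporate a small number of nonzero-difference vertical arcs, borrowed from appropriately chosen $H_j$'s, and correspondingly rebuild those $H_j$ to absorb the displaced horizontal or difference-$0$ arcs. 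Following the style of Lemma \ref{lem:spec35}, such swaps can be implemented via short building-block dipaths spanning only a few consecutive levels $V_i, V_{i+1}, V_{i+2}$, concatenated around $\vec{C}_n$; the case analysis is parameterized by $\gcd(n, m)$, and verifying arc-disjointness together with the Hamiltonian property of every resulting cycle is the most delicate step.
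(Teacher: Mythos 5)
Your high-level plan coincides with the paper's: peel off two directed hamiltonian cycles covering the horizontal arcs and the difference-$0$ arcs, and decompose the remaining $\vec{C}_n \times K^*_m$ into $m-1$ directed hamiltonian cycles via Theorem \ref{thm:cat}. However, there is a genuine gap in how you handle the first two cycles. By fixing the standard labelling you correctly identify the residual digraph as the Cartesian product $\vec{C}_n\,\square\,\vec{C}_m$, but Keating's criterion then \emph{fails} for infinitely many admissible pairs (your own example $(4,12)$, and more generally whenever $4 \mid \gcd(n,m)$ and $3 \mid nm$, among others), and since Keating's result is an if-and-only-if, the Cartesian product is genuinely not hamiltonian decomposable in those cases. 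Your proposed repair --- borrowing nonzero-difference arcs from some of the cycles $H_j$ and rebuilding those $H_j$ --- is not carried out, and it is not clear it can be: Theorem \ref{thm:cat} is invoked as a black box, so you have no control over the internal structure of the $H_j$ and no basis for the claimed local surgeries. As written, the argument proves the proposition only for the pairs $(n,m)$ satisfying Keating's condition.

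The idea you are missing is that the embedding of $\vec{C}_m$ into each fibre $V_i$ is a free parameter (the wreath product is defined only up to the choice of isomorphism onto each induced copy of $H$). The paper embeds $\vec{C}_m$ forwards on even levels and on level $n-1$, and backwards on the remaining odd levels. With this boustrophedon embedding the union of the horizontal arcs and the difference-$0$ arcs is no longer the Cartesian product of two directed cycles; it decomposes explicitly into two directed hamiltonian cycles $C^0=P_0P_2\cdots P_{m-2}$ and $C^1=P_1P_3\cdots P_{m-1}$ built from the dipaths $P_j=0_j\,0_{j+1}\,1_{j+1}\,1_j\,2_j\,2_{j+1}\cdots(n-1)_{j+1}\,(n-1)_{j+2}\,0_{j+2}$, uniformly for all even $n\geqslant 4$ and even $m\geqslant 6$, with no number-theoretic obstruction. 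The complement is still exactly $\vec{C}_n\times K^*_m$, so Theorem \ref{thm:cat} finishes the proof. If you want to salvage your version, replacing the appeal to Keating by this re-embedding (or by any other explicit two-cycle decomposition of the horizontal-plus-difference-$0$ subdigraph) is the step that must be supplied.
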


\begin{proof} 
We will construct a decomposition of $\vec{C}_n \wr \vec{C}_m$ into $m+1$ directed hamiltonian cycles. 

If $i$ is even or $i =n-1$, we embed $\vec{C}_m$ into $V_i$ as follows: $i_{0}\, i_{1}\cdots\, i_{m-1}\, i_{0}$. If $i$ is odd and $i\leqslant n-3$, then we embed $\vec{C}_m$ into $V_i$ as follows: $i_{0}\, i_{m-1}\, i_{m-2}\cdots\, i_{1}\, i_{0}$.  

First, we construct two directed hamiltonian cycles. For each $j \in \mathds{Z}_m$, we construct the following dipath of length $2n$: $P_j=0_j\, 0_{j+1}\, 1_{j+1}\, 1_{j}\, 2_j\, 2_{j+1}\cdots\, (n-2)_j\, (n-2)_{j+1}\, (n-1)_{j+1}\, (n-1)_{j+2}\, 0_{j+2}$.

Next, we let $C^0=P_0P_2\cdots P_{m-4}P_{m-2}$ and $C^1=P_1P_3P_5 \cdots P_{m-3}P_{m-1}$. Each of $C^0$ and  $C^1$ is a directed hamiltonian cycle of $\vec{C}_n \wr \vec{C}_m$. Moreover, if $i \neq j$, then $P_i$ and $P_j$ are arc-disjoint. This means that $C^0$ and $C^1$ are also arc-disjoint. 

The directed cycles $C^0$ and $C^1$ jointly use all arcs of difference 0 and all horizontal arcs exactly once. The digraph obtained by removing the arcs of $C^0$ and $C^1$ from  $\vec{C}_n\wr\vec{C}_m$ is necessarily isomorphic to $\vec{C}_n \times K^*_m$. By Theorem \ref{thm:cat}, $\vec{C}_n \times K^*_m$ is hamiltonian decomposable. Therefore, $\vec{C}_n \wr \vec{C}_m$ is hamiltonian decomposable.\end{proof}

We point out that the construction given in Proposition \ref{prop:neven} does not apply to the case $n=2$. This exception stems from the fact that it is not known whether $\vec{C}_2 \times K^*_m$ is hamiltonian decomposable when $m$ is even. Using a computer, we have affirmed this statement for all even $m$ such that $6\leqslant m \leqslant 16$. See Appendix B of \cite{thesis}.

In addition, observe that the case $m=4$ is not addressed in Proposition \ref{prop:neven} because Theorem \ref{thm:cat} does not apply to this case. Therefore, a separate construction is given for this case below.

\begin{lemma}
\label{lem:m4}
Let $n$ be an even integer. The digraph $\vec{C}_n\wr \vec{C}_4$ is hamiltonian decomposable. 
\end{lemma}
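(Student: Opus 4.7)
The graph $\vec{C}_n \wr \vec{C}_4$ has out-degree $5$, so I need to exhibit $5$ arc-disjoint directed hamiltonian cycles. A natural first attempt is to mimic Proposition~\ref{prop:neven}: absorb the horizontal and difference-$0$ arcs into two hamiltonian cycles and hope to decompose the remaining $\vec{C}_n \times K^*_4$ into three more hamiltonian cycles. This attempt fails, however---not merely because Theorem~\ref{thm:cat} excludes $m = 4$, but for a genuine parity reason. If such a decomposition of $\vec{C}_n \times K^*_4$ existed, say $(\sigma_0^k, \ldots, \sigma_{n-1}^k)$ for $k \in \{1,2,3\}$, then each $\sigma_i^k$ would be a derangement of $\mathds{Z}_4$, each product $\sigma_0^k \cdots \sigma_{n-1}^k$ would be a $4$-cycle (of sign $-1$), and at each transition $i$ the three derangements $\sigma_i^1, \sigma_i^2, \sigma_i^3$ would partition the $12$ arcs of $K^*_4$. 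A brief enumeration of all partitions of $A(K^*_4)$ into three derangements shows that each such partition contains $0$ or $2$ derangements of $4$-cycle type, so the signs of its three derangements always multiply to $+1$. This gives $\prod_{k,i}\sgn(\sigma_i^k) = \prod_i (+1) = +1$ on the one hand, and $\prod_k (-1) = -1$ on the other, a contradiction.

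Instead, I plan to construct the five hamiltonian cycles directly, letting each cycle use a mixture of horizontal, difference-$0$, and difference-$1,2,3$ arcs. For $n = 2$, I would exhibit five arc-disjoint directed hamiltonian cycles of $\vec{C}_2 \wr \vec{C}_4$ explicitly (an $8$-vertex digraph with $40$ arcs), which can be found by a short hand search. For $n \geq 4$ even, I would follow the template of Case~$2$ in Lemma~\ref{lem:spec35}: fix five ``base'' dipaths $X^0, \ldots, X^4$ whose union covers the arcs of $\vec{C}_n \wr \vec{C}_4$ lying in a small initial block of layers (say $V_0 \cup V_1 \cup V_2 \cup V_3$), and for each even $i$ with $4 \leq i \leq n-2$ define five ``extension'' dipaths $M^i_0, \ldots, M^i_4$ supported on $V_i \cup V_{i+1} \cup V_{i+2}$. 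The five candidate hamiltonian cycles are then the concatenations $C^k = X^k\,M^4_k\,M^6_k \cdots M^{n-2}_k$ for $k = 0, 1, 2, 3, 4$.

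The main obstacle is to design the $X^k$ and the $M^i_k$ so that each concatenation $C^k$ closes up into a single directed cycle of length $4n$ (and not a union of strictly shorter cycles), while collectively the five cycles use every horizontal arc, every difference-$0$ arc, and every arc of difference $1$, $2$, or $3$ exactly once. Because of the parity obstruction noted above, arc types cannot be grouped by hamiltonian cycle, so each $C^k$ must blend several arc types, which makes the bookkeeping delicate. Once the dipaths are written out explicitly as lists of vertex sequences (in the style of the displayed formulas in Lemma~\ref{lem:spec35}), verifying the hamiltonian property of each $C^k$ and the coverage of all arcs reduces to a finite combinatorial check.
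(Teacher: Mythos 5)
Your opening parity observation is correct and is actually a nice strengthening of what the paper says: a hamiltonian decomposition of $\vec{C}_n\times K^*_4$ would consist of three $n$-tuples of fixed-point-free permutations of $\mathds{Z}_4$ whose entries at each position partition the twelve arcs of $K^*_4$, and since every such partition uses either zero or two $4$-cycles (exactly one is impossible because two arc-disjoint double transpositions leave a double transposition, and three pairwise arc-disjoint $4$-cycles would be a hamiltonian decomposition of $K^*_4$), the signs at each position multiply to $+1$ while the three hamiltonian products each have sign $-1$. So $\vec{C}_n\times K^*_4$ is genuinely not hamiltonian decomposable, not merely outside the scope of Theorem~\ref{thm:cat}, and any decomposition of $\vec{C}_n\wr\vec{C}_4$ must mix arc types across cycles. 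This is worth recording.

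However, the proof has a genuine gap: it stops exactly where the mathematical content of the lemma begins. The statement is an existence claim, and its proof consists of exhibiting five pairwise arc-disjoint directed hamiltonian cycles; you never do this. For $n=2$ you assert that five such cycles ``can be found by a short hand search'' without producing them, and for $n\geqslant 4$ you describe only the shape of a construction --- base dipaths $X^0,\ldots,X^4$ on $V_0\cup\cdots\cup V_3$ and periodic extension dipaths $M^i_k$ on $V_i\cup V_{i+1}\cup V_{i+2}$ --- while explicitly deferring their definition. The ``main obstacle'' you identify (choosing the dipaths so that each concatenation closes into a single cycle of length $4n$ and the five cycles jointly cover every arc exactly once) is not a routine verification to be postponed; it is the entire proof, and it is not obvious a priori that dipaths with all of these simultaneous properties exist within your template. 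Indeed, your template as stated is more rigid than what turns out to be needed: with a single base dipath per cycle covering four full layers, each $X^k$ would have to be a hamiltonian dipath of the subdigraph induced on $V_0\cup\cdots\cup V_3$, whereas the working construction in the paper uses several shorter base dipaths per cycle (three or four, of varying lengths) interleaved with extension dipaths that are not confined to three consecutive layers. Until the vertex sequences are written out and the arc-disjointness, coverage, and single-cycle properties are checked, the lemma remains unproved.
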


\begin{proof} We will consider two cases. 

\noindent \underline{Case 1}: $n=2$. The digraph $\vec{C}_4$ is embedded into $V_0$ and $V_1$ as follows: $0_0\, 0_1\, 0_2\, 0_3\, 0_0$ and $1_0\, 1_1\, 1_2\, 1_3\, 1_0$, respectively. Using this embedding, we construct five directed hamiltonian cycles:

 \smallskip
{\centering
  $ \displaystyle
    \begin{aligned} 
&C^0=0_0\,1_0\,0_3\, 1_2\, 0_1 \,1_1\, 0_2\, 1_3\, 0_0;&C^2=0_0 \,0_1 \,1_0 \,1_1 \,0_3\, 1_3\, 0_2\, 1_2\, 0_0;  &~~~C^4=0_0 \,1_1 \,0_1 \,1_2\, 1_3\, 1_0\, 0_2\, 0_3\, 0_0.\\
&C^1=0_0\, 1_2\, 0_2\, 1_0\,0_1 \,1_3\, 0_3\, 1_1\, 0_0; &C^3=0_0 \,1_3 \,0_1\, 0_2\, 1_1\, 1_2\, 0_3\, 1_0\, 0_0;&
    \end{aligned}
  $ 
\par}
\smallskip

\noindent It can easily be verified that each arc of $\vec{C}_2 \wr \vec{C}_4$ is used by exactly one directed cycle in $D=\{C^0, C^1, C^2, C^3, C^4\}$. As a result, the set $D$ is a decomposition of $\vec{C}_2 \wr \vec{C}_4$ into directed hamiltonian cycles. 

\noindent \underline{Case 2}: $n \geqslant 4 $. For $i \in \{5,7, \ldots, n-1\}$, $\vec{C}_4$ is embedded into $V_i$ as follows: $i_0\, i_3\, i_2\, i_1\, i_0$. Otherwise, for all other values of $i$, $\vec{C}_4$ is embedded into $V_i$ as follows: $i_0\, i_1\, i_2\, i_3\, i_0$. We now construct the following 16 dipaths:

\begin{figure} [h!]
    \begin{multicols}{4}
\smallskip
{\centering
  $ \displaystyle
    \begin{aligned} 
&U_0=0_0\, 1_2\,2_0\,3_1\,4_1;\\
&U_1=0_1\, 1_3\,2_2\,3_2\,4_2;\\
&U_2=0_2\, 1_1\,2_3\,3_3\,4_3;\\
&U_3=0_3\, 1_0\,2_1\,3_0\,4_0;\\
    \end{aligned}
  $ 
\par}
{\centering
  $ \displaystyle
    \begin{aligned} 
 &W_0=0_0\, 1_3\, 1_0\,2_2\,2_3\,3_1\,4_2;\\
&W_1=0_2\, 0_3\, 1_2\, 2_1\,3_3\, 3_0\,4_1;\\
&W_2=0_1\, 1_1\,2_0\,3_2\, \,4_0;\\
&X_0=0_3\, 0_0\, 1_1\,2_2\,3_3\,4_2;\\
    \end{aligned}
  $ 
\par}
{\centering
  $ \displaystyle
    \begin{aligned} 
&X_1=0_2\, 1_2\, 1_3\, 2_0\,2_1\, 3_2\,4_1;\\
&X_2=0_1\, 1_0\,2_3\,3_0\, 3_1\,4_3;\\
&Y_0=0_0\, 0_1\,1_2\, 2_2\,3_0\,4_2;\\
&Y_1=0_2\, 1_0\, 1_1\, 2_1\,3_1\, 3_2\,4_3;\\
    \end{aligned}
  $ 
\par}
{\centering
  $ \displaystyle
    \begin{aligned} 
&Y_2=0_3\, 1_3\,2_3\,2_0\, 3_3\,4_0;\\
&Z_0=0_0\, 1_0\,2_0\, 3_0\,4_3;\\
&Z_1=0_3\, 1_1\, 1_2\, 2_3\,3_2\, 3_3\,4_1;\\
&Z_2=0_1\, 0_2\,1_3\,2_1\, 2_2\, 3_1\,4_0.\\
    \end{aligned}
  $ 
\par}
\smallskip
\end{multicols}
\end{figure}
\vspace{-1em}

Observe that, if $n=4$, then $4_j=0_j$. Therefore, if $n=4$, then we can form the following five directed cycles of length 16: 

 \smallskip
{\centering
  $ \displaystyle
    \begin{aligned} 
&C^0=U_0U_1U_2U_3; &C^1=W_0W_1W_2; &~~C^2=X_0X_1X_2; &C^3=Y_0Y_1Y_2; &~~C^4=Z_0Z_1Z_2.\\
    \end{aligned}
  $ 
\par}
\smallskip

\noindent It is routine to verify that each directed cycle in  $D=\{C^0, C^1, C^2, C^3, C^4\}$ are pairwise arc-disjoint. This means that $D$ is a decomposition of $\vec{C}_4\wr \vec{C}_4$ into directed hamiltonian cycles. 

Otherwise, if $n\geqslant 6$, we also form the following 16 dipaths:

{\centering
  $
    \begin{aligned} 
 &L_0=4_1\,5_3\,6_1\,7_3\,8_1\cdots (n-2)_1\, (n-1)_3\, 0_1;\\
&L_1=4_2\,5_0\,6_2\,7_0\,8_2\cdots (n-2)_2\, (n-1)_0\, 0_2;\\
&L_2=4_3\,5_1\,6_3\,7_1\,8_3\cdots (n-2)_3\, (n-1)_1\, 0_3;\\
&L_3=4_0\,5_2\,6_0\,7_2\,8_0\cdots (n-2)_0\, (n-1)_2\, 0_0;\\
&M_0=4_2\, 4_3\, 5_3\, 5_2\, 6_2\, 6_3\, 7_3\, 7_2 \cdots (n-2)_2\, (n-2)_3\, (n-1)_3\, (n-1)_2\, 0_0;\\
&M_1=4_1\, 5_0\, 6_1\, 7_0\, 8_1\, 9_0 \cdots (n-2)_1\, (n-1)_0\, 0_1;\\
&M_2=4_0\, 5_1\, 6_0\, 7_1\, 8_0\, 9_1 \cdots (n-2)_0\, (n-1)_1\, 0_0;\\ 
&N_0=4_2\, 5_1\, 6_2\, 7_1\, 8_2\, 9_1 \cdots (n-2)_2\, (n-1)_1\, 0_2;\\
&N_1=4_1\, 5_2\, 6_1\, 7_2\, 8_1\, 9_2\cdots (n-2)_1\, (n-1)_2\, 0_1;\\
&N_2=4_3\, 4_0\, 5_0\, 5_3\, 6_3\, 6_0\,7_0\, 7_3\, \ldots (n-2)_3\, (n-2)_0\, (n-1)_0\, (n-1)_3\, 0_3;\\
&O_0=4_2\, 5_3\, 6_2\, 7_3\, 8_2\, 9_3\cdots (n-2)_2\, (n-1)_3\, 0_2;\\
&O_1=4_3\, 5_2\, 6_3\, 7_2\, 8_3\, 9_2\cdots (n-2)_3\, (n-1)_2\, 0_3;\\
    \end{aligned}
  $ 
\par}
{\centering
  $
    \begin{aligned} 
    &O_2=4_0\, 4_1\, 5_1\, 5_0\, 6_0\, 6_1 \,7_1\, 7_0\, \ldots (n-2)_0\, (n-2)_1\, (n-1)_1\, (n-1)_0\, 0_0;\\ 
    &P_0=4_3\, 5_0\, 6_3\, 7_0\, 8_3\, 9_0\cdots (n-2)_3\, (n-1)_0\, 0_3;\\
&P_1=4_1\, 4_2\, 5_2\, 5_1\, 6_1\, 6_2 \,7_2\, 7_1\, \ldots (n-2)_1\, (n-2)_2\, (n-1)_2\, (n-1)_1\, 0_0;\\
&P_2=4_0\, 5_3\, 6_0\, 7_3\, 8_0\, 9_3\cdots (n-2)_0\, (n-1)_3\, 0_0.
    \end{aligned}
  $ 
\par}

All $L_i$ are of length $n-4$. Furthermore, dipaths $M_0, N_2, O_2$, and $P_1$ are of length $2(n-4)$. The remaining dipaths are of length $n-4$. We can then form the following directed walks:

\smallskip
{\centering
  $ \displaystyle
    \begin{aligned} 
&C^0=U_0L_0U_1L_1U_2L_2U_3L_3; &&C^2=X_0N_0X_1N_1X_2N_2;  &&C^4=Z_0P_0Z_1P_1Z_2P_2.\\
&C^1=W_0M_0W_1M_1W_2M_2; &&C^3=Y_0O_0Y_1O_1Y_2O_2; &&
    \end{aligned}
  $ 
\par}
\smallskip

\noindent Let $D=\{C^0, C^1, C^2, C^3, C^4\}$. It is tedious but straightforward to verify that each directed walk of $D$ is a directed hamiltonian cycle and that these directed cycles are pairwise arc-disjoint. Therefore, the set $D$ is a hamiltonian decomposition of $\vec{C}_n \wr \vec{C}_4$. \end{proof}

We now proceed with the case of Conjecture \ref{conj:main} where $G=\vec{C}_n$ and $H=\vec{C}_m$, $n$ is even, $m\geqslant 5$, and $m$ is odd. The case $m=3$ is omitted because we show that $\vec{C}_n \wr \vec{C}_3$ is not hamiltonian decomposable for all even $n$ in Section \ref{sec:contra}. 

\begin{proposition}
\label{prop:odd}
Let $n$ be an even integer and $m\geqslant 5$ be an odd integer. The digraph $\vec{C}_n \wr \vec{C}_m$ is hamiltonian decomposable.  
\end{proposition}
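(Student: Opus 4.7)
My plan is a starter--rotation construction. I build one directed hamiltonian cycle $C^0$ of $\vec{C}_n\wr\vec{C}_m$ (the \emph{starter}) with the following structure: (a) $C^0$ uses exactly one horizontal arc in each $V_i$; (b) at each transition $V_i\to V_{i+1}$, the $m-1$ non-horizontal arcs of $C^0$ realize $m-1$ pairwise distinct differences, missing one difference $d_i^*\in\mathbb{Z}_m$. Let $\tau=(0,1,\dots,m-1)\in S_m$ act on $V(\vec{C}_n\wr\vec{C}_m)$ by $i_j\mapsto i_{j+1}$. Conditions (a) and (b) imply that the $m$ rotations $C^0\tau^k$, for $k\in\mathbb{Z}_m$, are pairwise arc-disjoint directed hamiltonian cycles; they jointly use every horizontal arc of $\vec{C}_n\wr\vec{C}_m$ and, at each transition, every non-horizontal arc of difference different from $d_i^*$. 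The leftover arcs form a 2-factor $C^*=(\tau^{d_0^*},\ldots,\tau^{d_{n-1}^*})$ in the notation of Notation \ref{not:Sm}, which is a single directed hamiltonian cycle if and only if $\gcd(d_0^*+\cdots+d_{n-1}^*,m)=1$.

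Writing $h_i\in\mathbb{Z}_m$ for the second coordinate of the source of the unique horizontal arc of $C^0$ in $V_i$, a short sum-of-differences argument (using that the non-horizontal bijection in $C^0$ must map $\mathbb{Z}_m\setminus\{h_i\}$ onto $\mathbb{Z}_m\setminus\{h_{i+1}+1\}$) yields $d_i^*\equiv h_{i+1}-h_i+1\pmod{m}$, whence $\sum_{i\in\mathbb{Z}_n}d_i^*\equiv n\pmod{m}$ regardless of the choice of the $h_i$'s or of the non-horizontal bijections. Hence the construction succeeds whenever $\gcd(n,m)=1$, in which case an explicit starter can be built as a zigzag through $V_0,V_1,\ldots,V_{n-1}$ with $h_i=0$ for all $i$, analogous to the dipaths $P_j$ used in Proposition \ref{prop:neven}. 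Since $m$ is odd, this immediately handles $n=2$ as well as every even $n$ coprime to $m$.

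The remaining case is $\gcd(n,m)=g>1$, where $g$ is necessarily an odd common divisor of $n$ and $m$ (for instance $(n,m)=(10,5)$ or $(n,m)=(6,9)$). Here the forced congruence $\sum d_i^*\equiv n\equiv 0\pmod{g}$ makes $C^*$ a disjoint union of $g$ cycles of length $nm/g$ rather than a single hamiltonian cycle, so the bare rotation argument fails. My plan is to modify the scheme locally: drop one of the $m$ rotations, say $C^0\tau^{j_0}$, and replace the pair $(C^0\tau^{j_0},C^*)$ by two newly built directed hamiltonian cycles $\widetilde C^1,\widetilde C^2$ whose arc-union equals $A(C^0\tau^{j_0})\cup A(C^*)$. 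The replacements are to be obtained by a sequence of $g-1$ arc-swaps within the pair: each swap exchanges one arc of $C^0\tau^{j_0}$ with one arc of $C^*$ so as to merge two of the $g$ cycles of $C^*$ into a single longer one, and after $g-1$ such swaps $C^*$ becomes a single hamiltonian cycle $\widetilde C^2$; the remaining $\widetilde C^1$ must also be verified to be hamiltonian.

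The main technical obstacle is ensuring the swap sequence exists and preserves hamiltonicity on both sides at every intermediate stage, for every $(n,m)$ with $\gcd(n,m)>1$. This is likely to require a careful case analysis on the structure of the $g$ cycles of $C^*$ and of where they meet $C^0\tau^{j_0}$, in the spirit of Lemma \ref{lem:spec35}, with small instances of $(n,m)$ checked explicitly. An alternative, if the swap argument proves too delicate, is to abandon the rotation framework in these subcases and build the decomposition directly as a union of dipaths indexed by residues modulo $g$, mirroring the ad hoc style of Proposition \ref{prop:complete2} and Lemma \ref{lem:m4}.
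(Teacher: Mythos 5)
Your rotation framework is sound as far as it goes: the computation $d_i^*\equiv h_{i+1}-h_i+1\pmod m$ (which uses $m$ odd via $\binom{m}{2}\equiv 0\pmod m$), the telescoping sum $\sum_i d_i^*\equiv n\pmod m$, and the conclusion that the leftover 2-factor $C^*=(\tau^{d_0^*},\ldots,\tau^{d_{n-1}^*})$ is a single cycle if and only if $\gcd(n,m)=1$ are all correct. But this is exactly where the genuine gap lies: the proposition covers every even $n$ and every odd $m\geqslant 5$, so it includes infinitely many pairs with $\gcd(n,m)>1$ (e.g.\ $(10,5)$, $(6,9)$, $(14,7)$), and your own congruence shows that the pure starter--rotation scheme \emph{cannot} succeed for any of them -- this is a structural obstruction, not a technicality. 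For those pairs you offer only a plan (a sequence of $g-1$ arc-swaps between $C^0\tau^{j_0}$ and $C^*$), with no construction, no argument that admissible swaps exist, and no proof that both sides stay hamiltonian at each intermediate stage; you even flag that the plan may need to be abandoned. As written, the proof establishes the statement only when $\gcd(n,m)=1$. (A secondary, smaller gap: the starter $C^0$ with one horizontal arc per fibre and $m-1$ distinct differences per transition is asserted to exist "as a zigzag" but is never exhibited or verified to be a single cycle.)

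For comparison, the paper's proof avoids this obstruction entirely by not asking a single rotated starter to carry all non-horizontal arcs. It first builds two hamiltonian cycles $C^0,C^1$ absorbing all horizontal arcs and (almost all) arcs of difference $0$, and then realizes the remaining differences by $m-1$ separate $2$-factors written as $n$-tuples of powers of $\pi_1=(0\,1\cdots m-1)$: a typical one is $(\pi_i,\pi_{-i},\ldots,\pi_i,\pi_{-i},\pi_{-i+1},\pi_i)$, whose product telescopes to $\pi_1$ and hence is always a single $m$-cycle, independently of $\gcd(n,m)$; two further $2$-factors use correction permutations $\mu,\sigma$ in the last coordinate (this is where the split $m\equiv 1$ versus $3\pmod 4$ enters) to pick up the leftover difference-$0$ arcs while keeping $T(\pi_2\mu)=T(\pi_3\sigma)=1$. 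Mixing differences $i$ and $-i$ across transitions is precisely the freedom your leftover $2$-factor $C^*$ lacks. To complete your argument you would need either to carry out the swap analysis in full generality or to redesign the construction along these lines for the $\gcd(n,m)>1$ case.
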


\begin{proof} To construct a directed hamiltonian decomposition of $\vec{C}_n \wr \vec{C}_m$, we consider two cases.  In both cases, we use the following embedding of $\vec{C}_m$ into each $V_i$. If $i$ is even or $i =n-1$, we embed $\vec{C}_m$ into $V_i$ as follows: $i_{0}\, i_{1}\, i_2\cdots\, i_{m-1}\, i_0$. If $i$ is odd and $i\leqslant n-3$, then we embed $\vec{C}_m$ into $V_i$ as follows: $i_{0}\, i_{m-1}\cdots\, i_{2}\, i_{1}\, i_0$.  

\noindent \underline{Case 1}: $m \equiv 1\ (\textrm{mod} \ 4)$. First, we construct two specific directed hamiltonian cycles of  $\vec{C}_n \wr \vec{C}_m$. 
 
If $n=2$, we let

\smallskip
{\centering
  $ \displaystyle
    \begin{aligned} 
&C^0=0_0\, 0_1\, 1_1\, 1_2\, 0_2\, 0_3\, 1_3\, 1_4\cdots\, 0_{m-3}\, 0_{m-2}\, 1_{m-2}\, 0_{m-1}\, 1_{m-1}\, 1_0\, 0_0;\\
&C^1=0_1\, 0_2\, 1_2\, 1_3\, 0_3\, 0_4\, 1_4\, 1_5\cdots\, 0_{m-4}\, 0_{m-3}\, 1_{m-3}\, 1_{m-2}\, 1_{m-1}\, 0_{m-2}\, 0_{m-1}\, 0_0\, 1_0\,  1_1\, 0_1. 
     \end{aligned}
  $ 
\par}
 \smallskip
 
\noindent Otherwise, if $n\geqslant 4$, we construct a set of $m$ dipaths. For each $j$ such that $0 \leqslant j \leqslant m-5$, we construct the following dipath:
 $P_j=0_j\, 0_{j+1}\, 1_{j+1}\, 1_{j}\, 2_j\, 2_{j+1}\cdots\, (n-2)_j\, (n-2)_{j+1}\, (n-1)_{j+1}\, (n-1)_{j+2}\, 0_{j+2}$.  Next, we build the following four dipaths:
 
 \smallskip
 {\centering
  $ \displaystyle
    \begin{aligned} 
  &P_{m-4}&= \quad &0_{m-4}\, 0_{m-3}\, 1_{m-3}\, 1_{m-4}\, 2_{m-4}\, 2_{m-3} \cdots (n-2)_{m-4}\, (n-2)_{m-3}\, (n-1)_{m-3}\, (n-1)_{m-2}\,\\
  &&& (n-1)_{m-1}\,  0_{m-2};\\   
 &P_{m-3}&= \quad &0_{m-3}\, 0_{m-2}\, 1_{m-2}\, 1_{m-3}\, 2_{m-3}\, 2_{m-2}\cdots (n-2)_{m-3}\, (n-2)_{m-2}\, (n-1)_{m-2}\, 0_{m-1};\\
 &P_{m-2}&= \quad &0_{m-2}\, 0_{m-1}\, 0_0\,  1_{0}\, 1_{m-1}\, 1_{m-2}\, 2_{m-2}\,  2_{m-1}\, 2_0\cdots (n-2)_{m-2}\, (n-2)_{m-1}\,(n-2)_{0}\, (n-1)_{0}\, \\ 
&& & (n-1)_1\, 0_1;\\
 &P_{m-1}&= \quad &0_{m-1}\, 1_{m-1}\, 2_{m-1} \cdots (n-2)_{m-1}\, (n-1)_{m-1}\, (n-1)_{0}\,  0_{0}. 
    \end{aligned}
      $ 
\par}
\smallskip

\noindent  We then concatenate the dipaths constructed above as follows: $C^0=P_0P_2P_4 \cdots P_{m-3}P_{m-1}$ and $C^1=P_1P_3P_5 \cdots P_{m-4}P_{m-2}$. In both cases ($n=2$ and $n \geqslant 4$),  $C^0$ and $C^1$ jointly use all horizontal arcs and all arcs of difference 0 except for arcs $((n-1)_{m-2}, 0_{m-2})$ and $((n-1)_{m-1}, 0_{m-1})$. Moreover, the directed walks $C^0$ and $C^1$ have no repeated vertices other than their respective endpoints. Therefore, both $C^0$ and $C^1$ are directed hamiltonian cycles of $\vec{C}_n \wr \vec{C}_m$. Lastly, we note that $C^0$ and $C^1$ are arc-disjoint. 
 
Next, we construct $m-1$ 2-factors using $n$-tuples of permutations, as described in Notation  \ref{not:Sm}. Let  $\mu=(0\,\,1)(2\,\,3)(4\,\,5)\cdots(m-3\,\, m-2)$, $\sigma=(1\,\,2)(3\,\,4)(5\,\,6) \cdots  (m-4\,\, m-3)(m-1\,\,0)$, \linebreak $\pi_1=(0 \,\,1\,\, 2 \cdots \,\,m-1)$, and $\pi_k=\pi_1^k$. If $n=2$, we construct the following directed 2-factors: $C^2=(\pi_2, \mu)$ and $C^3=(\pi_3, \sigma)$. Observe that

    \smallskip
{\centering
  $ \displaystyle
    \begin{aligned} 
   &\pi_2\mu=(0\,\,3\,\,4\,\, 7\,\, 8\, 11\,\, 12\,\, 15\,\, 16\, \cdots\, m-5\, m-2\,\, 1\,\, 2\,\, 5\,\, 6\,\, 9\,\, 10\, \cdots\,\, m-3\,\, m-1);\\
 &\pi_3\sigma=(0\,\,4\,\,8\,\,12\,\, \cdots\,\, m-5\,\, m-2\,\, 2\,\,6\,\, \cdots\,\, m-3\,\, m-1\,\,1\,\, 3\,\, 5\,\,7\,\, \cdots\,\, m-4). 
        \end{aligned}
  $ 
\par}
\smallskip
  
\noindent Otherwise, if $n>2$, we construct the following directed 2-factors:
 
   \smallskip
{\centering
  $ \displaystyle
    \begin{aligned} 
 &C^2=(\pi_1, \pi_{-1}, \pi_1, \pi_{-1}, \ldots, \pi_1, \pi_{-1}, \pi_2, \mu); &C^3=(\pi_{-1}, \pi_{1}, \pi_{-1}, \pi_{1}, \ldots, \pi_{-1}, \pi_{1}, \pi_3, \sigma).\\
      \end{aligned}
  $ 
\par}
\smallskip

\noindent Note that $\pi_1 \pi_{-1} \pi_1 \pi_{-1} \cdots \pi_1\pi_{-1}\pi_2\mu=\pi_2\mu$ and $\pi_{-1}\pi_{1} \pi_{-1} \pi_{1} \cdots \pi_{-1} \pi_{1} \pi_3 \sigma=\pi_3\sigma$.

Since $T(\pi_2\mu)=T(\pi_3\sigma)=1$, $C^2$ and $C^3$ are directed hamiltonian cycles for all even $n \geqslant 2$. Moreover, we see that $\{((n-1)_j, 0_{j^\mu}), ((n-1)_j, 0_{j^\sigma}) \ |\ j \in \mathds{Z}_m\}\subseteq A(C^2)\cup A(C^3)$. This means that each arc of difference 0 in $\vec{C}_n\wr \vec{C}_m$ appears precisely once in $\{C^0, C^1, C^2, C^3\}$.  

Next, we construct  $m-4$ directed 2-factors. For each $i \in \mathds{Z}_m$ such that $i\not\in \{0,\pm1, -2\}$, we create the following directed 2-factor: $C^{i+2}=(\pi_{i}, \pi_{-i}, \ldots, \pi_{i}, \pi_{-i}, \pi_{-i+1}, \pi_{i})$. We also let  $C^{m}=(\pi_{-2}, \pi_{2}, \ldots, \pi_{-2}, \pi_2, \pi_{1}, \pi_{-2})$. Note that  $T(\pi_{i}\pi_{-i}\cdots \pi_{i} \pi_{-i}\pi_{-i+1}\pi_{i})=T(\pi_1)=1$ and $T(\pi_{-2}\pi_{2} \cdots \pi_{-2}$ $\pi_2 \pi_{1} \pi_{-2})=T(\pi_{-1})=1$. Therefore, each directed 2-factor $C^i$, where $0 \leqslant i \leqslant m$, is a directed hamiltonian cycle. Lastly, it can be verified that each arc of each difference $d \in \mathds{Z}_m$ and each horizontal arc appears exactly once in  $D=\{C^0, C^1, \ldots, C^m\}$. Therefore, the set $D$ is a directed hamiltonian decomposition of $\vec{C}_n \wr \vec{C}_m$. 
  
\noindent \underline{Case 2}: $m \equiv 3\ (\textrm{mod} \ 4)$. Hence $m \geqslant 7$. First, we construct the following six specific dipaths:

 \smallskip
{\centering
  $ \displaystyle
    \begin{aligned} 
&P_0=0_0\, 0_1\, 0_2\, 1_2\, 1_1\, 1_0\, 2_0\, 2_1\, 2_2\, 3_2\cdots(n-2)_0; &&P'_1= (n-2)_1\, (n-1)_1\, (n-1)_2\, 0_2;  \\
&P_0'= (n-2)_0\, (n-2)_1\, (n-2)_2\, (n-1)_2\,(n-1)_3\, (n-1)_4\, 0_3; &&P_2=0_2\, 0_3\, 1_3\, 1_2\, 2_2\cdots\, (n-2)_2; \\
&P_1=0_1\, 1_1\, 2_1\cdots\, (n-2)_1; &&P'_2=(n-2)_2\, (n-2)_3\, (n-1)_3\, 0_4.\\ 
    \end{aligned}
  $ 
\par}
\smallskip

\noindent For $j$ odd and $3 \leqslant j \leqslant m-2$, we construct:

 \smallskip
{\centering
  $ \displaystyle
    \begin{aligned} 
& P_j&&=&&0_j\, 0_{j+1}\, 0_{j+2}\,  1_{j+2}\, 1_{j+1}\, 1_j\,  2_{j}\, 2_{j+1}\, 2_{j+2}\, 3_{j+2}\, 3_{j+1}\, 3_{j}\,  \cdots\, (n-2)_j;\\
& P'_j&&=&& (n-2)_j\, (n-2)_{j+1}\,(n-2)_{j+2}\, (n-1)_{j+2}\, 0_{j+3}.\\
    \end{aligned}
  $ 
\par}
\smallskip

\noindent For $j$ even and $4 \leqslant j \leqslant m-1$, we construct:

 \smallskip
{\centering
  $ \displaystyle
    \begin{aligned} 
& P_j=0_j\, 1_{j} \, 2_{j} \cdots\, (n-2)_j; &P'_j=  (n-2)_j\,(n-1)_{j}\, (n-1)_{j+1}\, (n-1)_{j+2}\, 0_{j+1}.
    \end{aligned}
  $ 
\par}
\smallskip

\noindent  If $n=2$, then $P_j$ is a dipath of length 0 for all $j \in \mathds{Z}_m$ and we thus form the following directed walks:

 \smallskip
{\centering
  $ \displaystyle
    \begin{aligned}
C^0=P'_0P'_3P'_6P'_7P'_{10}P'_{11}\cdots P'_{m-4}P'_{m-1}; C^1=P'_1P'_2P'_4P'_5P'_{8}P'_{9} \cdots P'_{m-3}P'_{m-2}.
    \end{aligned}
  $ 
\par}
\smallskip

\noindent Next, consider the case $n \geqslant 4$. We form the following directed walks:
 
 \smallskip
{\centering
  $ \displaystyle
    \begin{aligned}
&C^0=P_0P_0'P_3P_3'P_6P_6'P_7P_7'P_{10}P_{10}'P_{11}P'_{11}\cdots P_{m-4}P_{m-4}'P_{m-1}P_{m-1}';\\
&C^1=P_1P_1'P_2P_2'P_4P_4'P_5P_5'P_{8}P_8'P_{9}P'_9 \cdots P_{m-3}P_{m-3}'P_{m-2}P_{m-2}'.
    \end{aligned}
  $ 
\par}
\smallskip

\noindent It it tedious but straightforward to verify that $C^0$ and $C^1$ are arc-disjoint directed hamiltonian cycles. 
 
Let $\mu=(1\,\,2)(4\,\,5)$ and $\sigma=(2\,\,3)(6\,\,7)(8\,\,9) \cdots (m-3\,\, m-2)(m-1\,\,0)$.  If $n=2$, we construct the following directed 2-factors: $C^2=(\pi_2, \mu)$,  $C^3=(\pi_3, \sigma)$. Observe that

    \smallskip
{\centering
  $ \displaystyle
    \begin{aligned} 
   &\pi_2\mu=(0\,\,1\,\,3\,\,4\,\,6\,\,8\,\,10\,\, \cdots\,\, m-1\,\, 2\,\, 5\,\, 7\,\, \cdots\,\, m-2);\\
   &\pi_3\sigma=(0\,\,2\,\, 5\,\, 9\,\, 13\,\, 17\,\, \cdots\,\, m-2\,\, 1\,\, 4\,\, 6\,\, 8\,\, 10\,\, \cdots\,\, m-1\,\, 3\,\, 7\,\, \cdots\,\, m-4).  
        \end{aligned}
  $ 
\par}
\smallskip
 
\noindent If $n\geqslant 4$, we construct the following directed 2-factors in the same fashion as Case 1: 
  
      \smallskip
{\centering
  $ \displaystyle
    \begin{aligned}  
  &C^2=(\pi_1, \pi_{-1}, \pi_1, \pi_{-1}, \ldots, \pi_1, \pi_{-1}, \pi_2, \mu); &C^3=(\pi_{-1}, \pi_{1}, \pi_{-1}, \pi_{1}, \ldots, \pi_{-1}, \pi_{1}, \pi_3, \sigma). 
         \end{aligned}
  $ 
\par}
\smallskip

\noindent Observe that $\pi_1\pi_{-1}\pi_1\pi_{-1} \cdots\pi_1\pi_{-1}\pi_2 \mu=\pi_2\mu$ and $\pi_{-1} \pi_{1} \pi_{-1} \pi_{1} \cdots \pi_{-1} \pi_{1} \pi_3\sigma=\pi_3\sigma$. We then note that $T(\pi_2\mu)=T(\pi_3\sigma)=1$. Therefore, both $C^2$ and $C^3$ are directed hamiltonian cycles for all even $n \geqslant 2$. 

Next, we construct $m-4$ directed 2-factors. For each $i \in \mathds{Z}_m$ and $i\not\in \{0,\pm1, -2\}$, we let: $C^{i+2}=(\pi_{i}, \pi_{-i}, \ldots, \pi_i, \pi_{-i},  \pi_{-i+1}, \pi_{i})$. Furthermore, we let $C^{m}=(\pi_{-2}, \pi_{2}, \ldots, \pi_{-2}, \pi_2, \pi_{1}, \pi_{-2})$. Note  that  $T(\pi_{i}\pi_{-i}\cdots\pi_i \pi_{-i}\pi_{-i+1}\pi_{i})$$=T(\pi_{1})=T(\pi_{-2}\pi_{2}\cdots\pi_{-2}\pi_{2}\pi_{1}\pi_{-2})=T(\pi_{-1})=1$. Therefore, each directed 2-factor $C^i$, where $0 \leqslant i \leqslant m$, is a directed hamiltonian cycle. Lastly, it can be verified that each arc of each difference $d \in \mathds{Z}_m$, and each horizontal arc, appears precisely once in $D=\{C^0, C^1, \ldots, C^m\}$. Therefore, the set $D$ is a directed hamiltonian decomposition of $\vec{C}_n \wr \vec{C}_m$.  \end{proof}

We conclude this section with the proof of Theorem \ref{thm:2}.

\noindent{\bf Proof of Theorem \ref{thm:2}:}
\begin{enumerate} [label=\textbf{(S\arabic*)}]
\item The result follows from Lemma \ref{lem:redCn} and Proposition \ref{prop:neven}. 
\item The result follows from Lemmas \ref{lem:redCn} and \ref{lem:m4}.
\item The result follows from Lemma \ref{lem:redCn} and Proposition \ref{prop:odd}. 
\end{enumerate}

Lastly, Propositions \ref{prop:notm2} and \ref{prop:notm3} imply that $\vec{C}_n\wr \vec{C}_{2}$ and $\vec{C}_n\wr \vec{C}_{3}$, respectively,  are not hamiltonian decomposable for all even $n$. See Section \ref{sec:contra} for Proposition \ref{prop:notm3} and its proof. $\square$

\section{The digraph $\vec{C}_n \wr \vec{C}_3$ is not hamiltonian decomposable}
\label{sec:contra}
In this section, we will show that $\vec{C}_n\wr \vec{C}_3$ is not hamiltonian decomposable for all even $n\geqslant 2$. This is another non-trivial exception to Conjecture \ref{conj:main}. The proof of this statement requires two cases. In Subsection \ref{sec:preA}, we will first introduce key notation and show that we can partition the set of all 2-factorizations of $\vec{C}_n\wr \vec{C}_3$ into two particular sets. Then, in Subsections \ref{sec:fix} and \ref{sec:type2}, we show that neither set contains a directed 2-factorization of  $\vec{C}_n\wr \vec{C}_3$ that is also a hamiltonian decomposition. 

\subsection{Preliminaries}
\label{sec:preA}

We begin by introducing the following assumption made throughout this section. 

\begin{assumption} \rm
\label{not:C3i}
In $\vec{C}_n \wr \vec{C}_3$, we embed $\vec{C}_3$ into $V_i$ as $i_0\, i_1\, i_2\, i_0$ for all $i \in \mathds{Z}_n$ and we let $C^i_3=i_0\, i_1\, i_2\,i_0$. 
\end{assumption}

We now demonstrate that all 2-factors of $\vec{C}_n \wr \vec{C}_3$ must contain the same number of horizontal arcs from each $C_3^j$.

\begin{lemma}
\label{lem:hor}
Let  $F$ be a directed 2-factor of $\vec{C}_n \wr \vec{C}_3$. Then there exists an integer $k$ such that $0 \leqslant k \leqslant 3$ and $|A(F)\cap A(C^i_3)|=k$ for all $i \in \mathds{Z}_n$. 
\end{lemma}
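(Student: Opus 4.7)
The plan is to exploit the degree constraints inherent in a directed 2-factor together with the rigid layered structure of the wreath product. Under Assumption \ref{not:C3i}, each vertex $i_j \in V_i$ has exactly one horizontal out-neighbour (namely $i_{j+1}$) and three non-horizontal out-neighbours, all of which lie in $V_{i+1}$; dually, $i_j$ has exactly one horizontal in-neighbour and three in-neighbours in $V_{i-1}$. In any 2-factor $F$, every vertex contributes exactly one out-arc and one in-arc to $F$, so I can compare these local counts across consecutive layers.

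The key step is to count, for each $i \in \mathds{Z}_n$, the number of arcs of $F$ that cross from $V_i$ to $V_{i+1}$ in two different ways. Let $k_i = |A(F) \cap A(C^i_3)|$. Looking at out-arcs: each of the three vertices of $V_i$ has exactly one out-arc in $F$, and this out-arc is horizontal if and only if it belongs to $A(F) \cap A(C^i_3)$. Consequently, exactly $3 - k_i$ vertices of $V_i$ send their out-arc into $V_{i+1}$. Looking at in-arcs on the other side: each of the three vertices of $V_{i+1}$ has exactly one in-arc in $F$, and this in-arc is horizontal if and only if it lies in $A(F) \cap A(C^{i+1}_3)$, so exactly $3 - k_{i+1}$ vertices of $V_{i+1}$ receive their in-arc from $V_i$. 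These two quantities count the same set of arcs, hence $3 - k_i = 3 - k_{i+1}$.

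Therefore $k_i = k_{i+1}$ for every $i \in \mathds{Z}_n$, and since the indices are taken modulo $n$, all $k_i$ share a common value $k$. The bound $0 \leq k \leq 3$ is automatic because $C^i_3$ has only three arcs. There is no real obstacle here; the argument is essentially a double-counting across the bipartition between $V_i$ and $V_{i+1}$, and the only fact to invoke carefully is the wreath-product property that every non-horizontal arc leaving $V_i$ terminates in $V_{i+1}$.
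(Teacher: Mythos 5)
Your argument is correct and is essentially the same as the paper's: both count the non-horizontal arcs of $F$ with tail in $V_i$ (there are $3-k_i$ of them) and identify them with the non-horizontal arcs of $F$ with head in $V_{i+1}$ (there are $3-k_{i+1}$ of them), forcing $k_i=k_{i+1}$. The paper merely phrases this double count as a proof by contradiction rather than as a direct equality.
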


\begin{proof} Suppose that $F$ is a directed 2-factor of $\vec{C}_n \wr \vec{C}_3$ such that $|A(F)\cap A(C^i_3)|$ is not constant. Then, without loss of generality, we may assume that, for some $i \in \mathds{Z}_n$, $F$ contains $k_1$ horizontal arcs from $C^i_3$ and $k_2$ horizontal arcs from $C^{j+1}_3$ and that $0 \leqslant k_1<k_2 \leqslant 3$. 

If $F$ uses $k_1$ horizontal arcs from $C^i_3$, then $F$ contains $3-k_1$ non-horizontal arcs with tail in $V_i$. By our assumption, $F$ also uses $k_2$ horizontal arcs from $C^{i+1}_3$. This means that $F$ must contain $3-k_2$ non-horizontal arcs with head in $V_{i+1}$ and these must be the non-horizontal arcs with tail in $V_i$. Since $3-k_1\neq 3-k_2$, we have a contradiction. \end{proof} 

As a result of Lemma \ref{lem:hor}, we can introduce the following definition. 

\begin{definition}
\label{defn:types}
\rm
Let $0\leqslant k\leqslant 3$. A \textit{type-$k$}  2-factor of $\vec{C}_n \wr \vec{C}_3$ is a directed 2-factor that contains $k$ horizontal arcs from each $C^i_3$.  A \textit{type-I} 2-factorization of $\vec{C}_n \wr \vec{C}_3$ is  comprised of three type-1 2-factors and one type-0 2-factor. A \textit{type-II} 2-factorization of $\vec{C}_n\wr \vec{C}_3$ is comprised of one type-2 2-factor, one type-1 2-factor, and two type-0 2-factors. 
\end{definition}

In Definition \ref{defn:types}, we described all possible 2-factorizations of interest. If a directed 2-factorization of $\vec{C}_n\wr \vec{C}_3$ contains a type-3 2-factor, then this 2-factorization is not hamiltonian since a type-3 2-factor is necessarily the digraph $n\vec{C}_3$. 

\begin{proposition} 
\label{prop:notm3}  
Let $n$ be a positive even integer. The digraph $\vec{C}_n \wr \vec{C}_3$ is not hamiltonian decomposable.
\end{proposition}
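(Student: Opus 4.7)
The plan is to split the argument according to Definition \ref{defn:types} and in each case derive a parity contradiction in $S_3$. The common mechanism is this: for a type-$1$ factor $F$ of $\vec{C}_n \wr \vec{C}_3$ with horizontal arc $(i_{b_i}, i_{b_i+1})$ in $V_i$, at each transition $V_i \to V_{i+1}$ the two non-horizontal arcs of $F$ admit exactly two arrangements, which I call \emph{parallel} (the ``edge-visit exit'' $i_{b_i+1}$ of $F$ is matched to $(i+1)_{b_{i+1}}$ and the ``single-visit'' vertex $i_{b_i+2}$ is matched to $(i+1)_{b_{i+1}+2}$) and \emph{cross} (the two heads above are swapped). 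In the natural $2n$-vertex contracted view of $F$, it is a hamiltonian cycle if and only if the number of cross transitions is odd.

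For a type-II factorization $\{F_2, F_1, G_1, G_2\}$, the two horizontal arcs of $F_2$ in $V_i$ form a dipath $i_{a_i}\, i_{a_i+1}\, i_{a_i+2}$, forcing $F_1$'s horizontal arc in $V_i$ to be the complementary arc $(i_{a_i+2}, i_{a_i})$. A direct computation shows that the permutation $\pi_i \in S_3$ induced by the three non-horizontal arcs of $F_2 \cup F_1$ at transition $i$ is the affine map $j \mapsto -j + a_i + a_{i+1} + 2$ (a transposition) when $F_1$ is parallel, and the shift $j \mapsto j + (a_{i+1} - a_i + 1)$ (an element of $A_3$) when $F_1$ is cross. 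The nine non-horizontal arcs at transition $i$ partition into $\{\pi_i, \sigma^{G_1}_i, \sigma^{G_2}_i\}$, and any such partition of the $3\times 3$ bipartite digraph into permutation matchings forces the three permutations to lie in a single coset of $A_3$ in $S_3$ (because $\pi \rho^{-1}$ is fixed-point-free, hence a $3$-cycle, for any two of them); hence $\sgn(\sigma^{G_k}_i) = \sgn(\pi_i)$. Hamiltonicity of each $G_k$ via Notation \ref{not:Sm} requires $\prod_i \sigma^{G_k}_i$ to be a $3$-cycle and therefore even, so the number of parallel transitions of $F_1$ must be even. Combined with $F_1$'s cross-count being odd, this gives an odd value of (parallel) + (cross) $= n$, contradicting $n$ even.

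For a type-I factorization $\{F^1, F^2, F^3, \tau\}$, each $F^k$ independently chooses parallel or cross at each transition and is hamiltonian iff its cross-count $c^k$ is odd. At each transition $i$ the three arcs of $\tau$ form a permutation $\sigma^\tau_i \in S_3$. The key claim I must establish is
\[
\sgn(\sigma^\tau_i) = (-1)^{c_i},
\]
where $c_i \in \{0,1,2,3\}$ is the number of the $F^k$'s that cross at transition $i$. The argument is a flip: whenever toggling a single $F^k$ between parallel and cross yields another arc-disjoint configuration, the two changed arcs of $F^k$ force $\tau$'s heads at the tails $i_{b^k_i+1}$ and $i_{b^k_i+2}$ to swap, so $\sigma^\tau_i$ is multiplied by a transposition; since $c_i$ simultaneously changes by $\pm 1$, the product $\sgn(\sigma^\tau_i) \cdot (-1)^{c_i}$ is invariant on each flip-connected component of valid configurations and equals $+1$ on any convenient reference. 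Granting the claim, $\tau$ hamiltonian gives $\prod_i \sgn(\sigma^\tau_i) = +1$, hence $\sum_i c_i = c^1 + c^2 + c^3$ is even; but each $c^k$ is odd, so the sum of three odd numbers is odd, a contradiction.

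The main obstacle will be the rigorous verification of the parity identity in the type-I case, since which triples $(\epsilon^1_i, \epsilon^2_i, \epsilon^3_i)$ give an arc-disjoint configuration depends on the horizontal-assignment permutations $(b^1_i, b^2_i, b^3_i)$ and $(b^1_{i+1}, b^2_{i+1}, b^3_{i+1})$, and for some of these the ``all parallel'' reference configuration is itself forbidden. A finite case analysis modulo the natural $\mathds{Z}_3$-symmetry on each layer's labelling will suffice.
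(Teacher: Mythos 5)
Your proposal is correct, and while the type-II half coincides with the paper's own argument, the type-I half takes a genuinely different and substantially more elementary route. For type-II you reproduce, in slightly different packaging, exactly the paper's Lemmas \ref{lem:switch} and \ref{lem:sitch2} and Proposition \ref{thm:typ2}: your ``parallel/cross'' dichotomy is the paper's ``switched'' notion, your computation that $\pi_i$ is a reflection $x\mapsto -x+a_i+a_{i+1}+2$ in the parallel case and a shift in the cross case matches Lemma \ref{lem:sitch2}, and the coset-of-$A_3$ observation (two disjoint permutation matchings of $\mathds{Z}_3$ differ by a fixed-point-free element, hence a $3$-cycle) is a clean way to see that all three matchings at a transition share a sign. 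This is also, correctly, the only place where the hypothesis that $n$ is even enters. For type-I, the paper instead encodes each local configuration as an element of $\mathds{Z}_2^3\rtimes(S_3\oplus S_3)$ (Definition \ref{defn:group}), derives a necessary condition on the product of the $n$ group elements (Lemma \ref{prop:iff}), and then verifies by a GAP computation over many thousands of products that the condition is never met (Proposition \ref{thm:typ1}). Your single local identity $\sgn(\sigma^\tau_i)=(-1)^{c_i}$ replaces all of that: it is true, it can be read off directly from the paper's Table \ref{fig:configurations} (in every one of the $24$ rows the sign of $F_3[i]$ equals the parity of the weight of the $\mathds{Z}_2^3$-component), and it amounts to exhibiting the homomorphism $\Gamma_{\mathcal F}\to\{\pm1\}$, $((a_0,a_1,a_2),(\sigma,\gamma))\mapsto \sgn(\gamma)(-1)^{a_0+a_1+a_2}$, which equals $+1$ on the generating set $\mathcal S$ but $-1$ on the paper's target set $A$. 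This both explains conceptually why the computer search comes out empty and shows that the type-I obstruction is independent of the parity of $n$ (consistent with Theorem \ref{thm:ng}, since for odd $n$ a type-II hamiltonian factorization can exist).

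The one soft spot is your proposed ``flip'' proof of the identity, and you have flagged it yourself: toggling a single $F^k$ is only an admissible move when both arcs it would acquire currently belong to $\tau$ rather than to another type-$1$ factor, and the admissible configurations for a fixed assignment of horizontal arcs need not be flip-connected to a convenient all-parallel reference. Rather than analyzing connectivity, the cleanest way to close this is to verify the identity directly on the finitely many admissible local triples --- there are $24$ of them, or $8$ up to the $\mathds{Z}_3$-rotation of the fibre labels, and this check is precisely the content of Table \ref{fig:configurations}. With that finite verification in place (and the preliminary classification of all hamiltonian $2$-factorizations into type-I and type-II, which rests on the short Lemma \ref{lem:hor}), your argument is complete and noticeably shorter than the paper's.
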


\begin{proof} As per the observation above, a directed 2-factorization of $\vec{C}_n\wr \vec{C}_3$ that is also a hamiltonian decomposition must be a type-I or a type-II 2-factorization. Propositions \ref{thm:typ1} and \ref{thm:typ2}, respectively, show that  $\vec{C}_n \wr \vec{C}_3$ admits neither a type-I nor a type-II  2-factorization that is also a hamiltonian decomposition. \end{proof}

Before we proceed, we give some further notation and definitions used to describe the constructions given in Subsections \ref{sec:fix} and \ref{sec:type2}. 

\begin{notation} \rm
\label{not:induced}
By $L_i$, we denote the subdigraph of $\vec{C}_n \wr \vec{C}_3$ induced by the set of vertices $V_i \cup V_{i+1}$ where $i \in \{0,1,2, \ldots, n-2\}$; the subdigraph $L_{n-1}$ is the subdigraph of $\vec{C}_n \wr \vec{C}_3$  induced by  $V_{n-1} \cup V_{0}$. 
\end{notation}

\begin{definition} \rm
\label{defn:inter}
Let $\mathcal{F}=\{F_0, F_1, F_2, F_3\}$ be a type-I or a type-II 2-factorization of $\vec{C}_n \wr \vec{C}_3$. By $F_j[i]$, where $j\in \{0,1,2,3\}$, we denote the subdigraph of $\vec{C}_n \wr \vec{C}_3$ with vertex set $V(L_i)$,  where $L_i$ is given in Definition \ref{not:induced}, and arc set $A(F_j)\cap A(L_i)$. 
\end{definition}

If $F_j$ is a type-1 directed 2-factor of  $\vec{C}_n \wr \vec{C}_3$, it follows that $F_j[i]$ contains two horizontal arcs and two non-horizontal arcs of $\vec{C}_n \wr \vec{C}_3$. Furthermore, $F_j[i]$ is the disjoint union of two dipaths that both contain precisely one non-horizontal arc. 

\subsection{Type-I 2-factorization of $\vec{C}_n \wr \vec{C}_3$}
\label{sec:fix}

Our objective is to demonstrate that $\vec{C}_n \wr \vec{C}_3$ does not admit a type-I  $2$-factorization that is also a hamiltonian decomposition. We do so by strategically describing each type-I 2-factorization of $\vec{C}_n \wr \vec{C}_3$ as an $n$-tuple of certain elements of a particular group. Then, we show that this 2-factorization is a hamiltonian decomposition only if the product of these $n$ group elements is one of two specific elements. Lastly, we show that, for all even $n$, no $n$-tuple satisfies this necessary condition. 

We begin by introducing the following definition.

\begin{definition}\rm
\label{defn:rooted}
Let $i \in \mathds{Z}_n$ and $j\in \mathds{Z}_3$. A type-1 directed 2-factor of $\vec{C}_n \wr \vec{C}_3$ that contains the arc $(i_{j}, i_{j+1})$ is called a \textit{$(i, j)$-rooted 2-factor}. 
\end{definition}

Throughout this section, we will be making the following assumption. 

\begin{assumption} \rm
\label{not:Fi}
Let $\mathcal{F}$ be a type-I 2-factorization of $\vec{C}_n \wr \vec{C}_3$ comprised of four $2$-factors $F_0, F_1, F_2,$ and $F_3$. For each $j \in \{0,1,2\}$, we may assume that $F_j$ is the type-1 $(0,j)$-rooted 2-factor of $\mathcal{F}$, and that $F_3$ is the type-0 2-factor of $\mathcal{F}$. We henceforth denote the type-1 2-factorization $\mathcal{F}$ as a 4-tuple $(F_0, F_1, F_2, F_3)$. 
\end{assumption}

Now, we list all possible subdigraphs $F_j[i]$, defined in Definition \ref{defn:inter}, where $j \in \{0,1,2\}$. Define $D_i=\{F_j[i]\ | \ j\in \mathds{Z}_3 \ \textrm{and} \ F_j \ \textrm{is a type-1 2-factor}\}$. Elements of $D_i$ are subdigraphs comprised of pairs of dipaths whose lengths sum to four. There are 18 distinct subdigraphs in $D_i$. We first list all elements of $D_i$ that contain the arc $(i_0, i_1)$. These are listed as the union of two dipaths as follows:

\begin{center}
\begin{tabular}{r l c l} 
$M_0^0[i]$&$=i_0  \,i_1  \,(i+1)_0\,$ &$\cup$& $i_2  \,(i+1)_1  \,(i+1)_2$;\\
$M_1^0[i]$&$=i_0  \,i_1  \, ( j+1)_0 \, (i+1)_1\,$ &$\cup$&$ i_2  \,(i+1)_2$;\\ 
$M_2^0[i]$&$=i_0  \,i_1  \,(i+1)_1\,$&$\cup$&$i_2  \,(i+1)_2  \,(i+1)_0$;\\ 
$M_3^0[i]$&$=i_0  \,i_1  \, ( j+1)_1\,(i+1)_2\,$ &$\cup$&$ i_2  \,(i+1)_0$;\\ 
$M_4^0[i]$&$=i_0  \,i_1  \,(i+1)_2\,$ &$\cup$&$i_2  \,(i+1)_0  \,(i+1)_1$;\\ 
$M_5^0[i]$&$=i_0  \,i_1  \, ( j+1)_2 \,(i+1)_0\,$ &$\cup$&$i_2  \,(i+1)_1$. \\ 
\end{tabular}
\end{center}

\noindent In Figure \ref{fig:Fij}, we illustrate the six elements of $D_i$ given above. These are the only six subdigraphs $F_j[i]$ that contain the arc $(i_0, i_1)$ taken over all possible type-I 2-factors of $\vec{C}_3 \wr \vec{C}_n$. 

\begin{figure} [htpb!]
\begin{center}
\begin{subfigure}[c]{0.32 \textwidth}
\begin{tikzpicture}[
  very thick,
  every node/.style={circle,draw=black,fill=black!90}, inner sep=2]
    every node/.style={circle,draw=black,fill=black!90}, inner sep=1.5, scale=0.75]
  \node (x0) at (0.0,3.0) [label=above:$0$] [label=left:$i$] {};
  \node (x1) at (1.0,3.0) [label=above:$1$]{};
  \node (x2) at (2.0,3.0)[label=above:$2$] {};
  \node (x3) at (3.0,3.0)[draw=gray, fill=gray, label=above:$0$] {};
  \node (y0) at (0.0,2.0) [label=left:$i+1$] {};
  \node (y1) at (1.0,2.0) {};
  \node (y2) at (2.0,2.0) {};
  \node (y3) at (3.0,2.0)[draw=gray, fill=gray] {};
  
  \path [very thick, draw=color8, postaction={very thick, on each segment={mid arrow}}]
	(x0) to (x1)
	(x1) to  (y0)
	(x2) to  (y1)
	(y1) to  (y2);
	
\end{tikzpicture}
\caption{The digraph $M_0^0[i]$.}
\label{fig:sw}
\end{subfigure}
\begin{subfigure}[c]{0.32 \textwidth}
\begin{tikzpicture}[
  very thick,
  every node/.style={circle,draw=black,fill=black!90}, inner sep=2]
    every node/.style={circle,draw=black,fill=black!90}, inner sep=1.5, scale=0.75]
  \node (x0) at (0.0,3.0) [label=above:$0$] [label=left:$i$] {};
  \node (x1) at (1.0,3.0) [label=above:$1$]{};
  \node (x2) at (2.0,3.0)[label=above:$2$] {};
  \node (x3) at (3.0,3.0)[draw=gray, fill=gray, label=above:$0$] {};
  \node (y0) at (0.0,2.0) [label=left:$i+1$] {};
  \node (y1) at (1.0,2.0) {};
  \node (y2) at (2.0,2.0) {};
  \node (y3) at (3.0,2.0)[draw=gray, fill=gray] {};
  
  \path [very thick, draw=color8, postaction={very thick, on each segment={mid arrow}}]
	(x0) to (x1)
	(x1) to  (y0)
	(y0) to  (y1)
	(x2) to  (y2);
\end{tikzpicture}
\caption{The digraph $M_1^0[i]$.}
\label{fig:nsw}
\end{subfigure}
\begin{subfigure}[c]{0.32 \textwidth}
\begin{tikzpicture}[
  very thick,
  every node/.style={circle,draw=black,fill=black!90}, inner sep=2]
    every node/.style={circle,draw=black,fill=black!90}, inner sep=1.5, scale=0.75]
  \node (x0) at (0.0,3.0) [label=above:$0$] [label=left:$i$] {};
  \node (x1) at (1.0,3.0) [label=above:$1$]{};
  \node (x2) at (2.0,3.0)[label=above:$2$] {};
  \node (x3) at (3.0,3.0)[draw=gray, fill=gray, label=above:$0$] {};
  \node (y0) at (0.0,2.0) [label=left:$i+1$] {};
  \node (y1) at (1.0,2.0) {};
  \node (y2) at (2.0,2.0) {};
  \node (y3) at (3.0,2.0)[draw=gray, fill=gray] {};
  
  \path [very thick, draw=color8, postaction={very thick, on each segment={mid arrow}}]
	(x0) to (x1)
	(x1) to  (y1)
	(x2) to  (y2)
	(y2) to  (y3);
\end{tikzpicture}
\caption{The digraph $M_2^0[i]$.}
\label{fig:sw1}
\end{subfigure}

\vspace{5mm}

\begin{subfigure}[c]{0.32 \textwidth}
\begin{tikzpicture}[
  very thick,
  every node/.style={circle,draw=black,fill=black!90}, inner sep=2]
    every node/.style={circle,draw=black,fill=black!90}, inner sep=1.5, scale=0.75]
  \node (x0) at (0.0,3.0) [label=above:$0$] [label=left:$i$] {};
  \node (x1) at (1.0,3.0) [label=above:$1$]{};
  \node (x2) at (2.0,3.0)[label=above:$2$] {};
  \node (x3) at (3.0,3.0)[draw=gray, fill=gray, label=above:$0$] {};
  \node (y0) at (0.0,2.0) [label=left:$i+1$] {};
  \node (y1) at (1.0,2.0) {};
  \node (y2) at (2.0,2.0) {};
  \node (y3) at (3.0,2.0)[draw=gray, fill=gray] {};
  
  \path [very thick, draw=color8, postaction={very thick, on each segment={mid arrow}}]
	(x0) to (x1)
	(x1) to  (y1)
	(y1) to  (y2)
	(x2) to  (y3);
\end{tikzpicture}
\caption{The digraph $M_3^0[i]$.}
\label{fig:nsw1}
\end{subfigure}
\begin{subfigure}[c]{0.32\textwidth}
\begin{tikzpicture}[
  very thick,
  every node/.style={circle,draw=black,fill=black!90}, inner sep=2]
    every node/.style={circle,draw=black,fill=black!90}, inner sep=1.5, scale=0.75]
  \node (x0) at (0.0,3.0) [label=above:$0$] [label=left:$i$] {};
  \node (x1) at (1.0,3.0) [label=above:$1$]{};
  \node (x2) at (2.0,3.0)[label=above:$2$] {};
  \node (x3) at (3.0,3.0)[draw=gray, fill=gray, label=above:$0$] {};
  \node (y0) at (0.0,2.0) [label=left:$i+1$] {};
  \node (y1) at (1.0,2.0) {};
  \node (y2) at (2.0,2.0) {};
  \node (y3) at (3.0,2.0)[draw=gray, fill=gray] {};
  \path [very thick, draw=color8, postaction={very thick, on each segment={mid arrow}}]
	(x0) to (x1)
	(x1) to  (y2)
	(x2) to  (y3)
	(y0) to  (y1);
\end{tikzpicture}
\caption{The digraph $M_4^0[i]$.}
\label{fig:sw2}
\end{subfigure}
\begin{subfigure}[c]{0.32 \textwidth}
\begin{tikzpicture}[
  very thick,
  every node/.style={circle,draw=black,fill=black!90}, inner sep=2]
    every node/.style={circle,draw=black,fill=black!90}, inner sep=1.5, scale=0.75]
  \node (x0) at (0.0,3.0) [label=above:$0$] [label=left:$i$] {};
  \node (x1) at (1.0,3.0) [label=above:$1$]{};
  \node (x2) at (2.0,3.0)[label=above:$2$] {};
  \node (x3) at (3.0,3.0)[draw=gray, fill=gray, label=above:$0$] {};
  \node (y0) at (0.0,2.0) [label=left:$i+1$] {};
  \node (y1) at (1.0,2.0) {};
  \node (y2) at (2.0,2.0) {};
  \node (y3) at (3.0,2.0)[draw=gray, fill=gray] {};
  \path [very thick, draw=color8, postaction={very thick, on each segment={mid arrow}}]
	(x0) to (x1)
	(x1) to  (y2)
	(y2) to  (y3)
	(x2) to  (y1);
\end{tikzpicture}
\caption{The digraph $M_5^0[i]$.}
\label{fig:nsw2}
\end{subfigure}
\end{center}
\caption{All possible subdigraphs $F_j[i]$ of a type-I 2-factor of $\vec{C}_n \wr \vec{C}_3$ containing the arc $(i_0, i_1)$.}
\label{fig:Fij}
\end{figure}
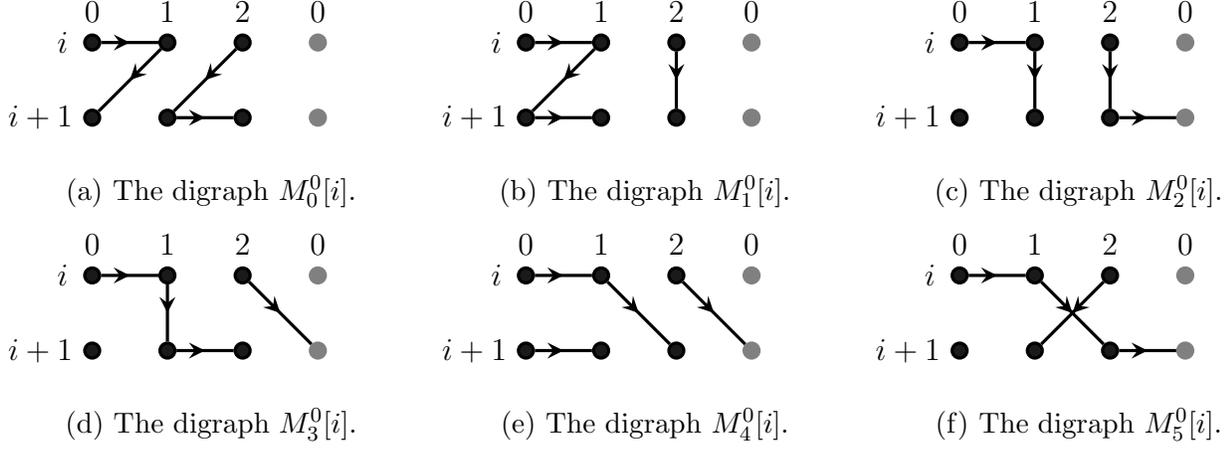

Using these six elements of $D_i$, we construct the remaining 12 elements of $D_i$ by using the following permutation of the elements of $V(\vec{C}_n \wr \vec{C}_3)$. 

\begin{definition}
\label{defn:rhorho}\rm
Define a permutation $\rho: V(\vec{C}_n \wr \vec{C}_3) \mapsto  V(\vec{C}_n \wr \vec{C}_3)$ as follows: $\rho(i_k)=i_{k+1}$ with addition of the indices done modulo 3. Given a dipath of $\vec{C}_n \wr \vec{C}_3$,  $P=v_1\, v_2\, \ldots\, v_\ell$, we let $\rho(P)=\rho(v_1)\, \rho(v_2)\, \ldots\, \rho(v_\ell)$. In addition, if $S$ is a subdigraph of $\vec{C}_n \wr \vec{C}_3$ such that $S=P^0\cup P^1\cup \cdots \cup P^r$, where $P^s$ is a dipath in $\vec{C}_n \wr \vec{C}_3$, then we let $\rho(S)=\rho(P^0)\cup \rho(P^1)\cup \cdots \cup \rho(P^r)$. 
\end{definition}

Using Definition \ref{defn:rhorho}, we construct all elements of $D_i$ as follows:

\begin{center}
$D_i=\{ M_\ell^k[i]=\rho^k(M_\ell^0[i]) \ | \  k=0,1,2\ \textrm{and} \ \ell=0,1, \ldots, 5\}$.
\end{center}

\noindent Observe that, for each $k \in \{0,1,2\}$, the set $\{M^k_\ell[i]\ | \ \ell=0,1,2,3,4,5\}$ contains all  subdigraphs $F_j[i]$ of $D_i$  that contain the horizontal arc $(i_k, i_{k+1})$. It is straightforward to verify that $D_i$ contains no other elements. 

Given a type-I 2-factorization $(F_0, F_1, F_2, F_3)$, each set $\{F_0[i], F_1[i], F_2[i]\}$ corresponds to a unique triple of the form $\{M^{0}_{\ell_0}[i], M^{1}_{\ell_1}[i], M^{2}_{\ell_2}[i]\}$, where $\ell_0, \ell_1, \ell_2 \in \{0,1,2,3,4,5\}$. 

\begin{notation}
\label{not:Ki} \rm
By $\mathcal{K}_i=\{M^{0}_{\ell_0}[i], M^{1}_{\ell_1}[i], M^{2}_{\ell_2}[i]\}$, we denote the set of all  possible triples such that digraphs in each triple are pairwise arc-disjoint. 
\end{notation}

All elements of $\mathcal{K}_i$ are listed in the first column of Table \ref{fig:configurations}, given in Appendix \ref{appx}, and were obtain with the aid of a computer. 

For each $T \in \mathcal{K}_i$, the digraph  obtained by taking the union of all three digraphs in $T$, which we denote as $D$, contains 12 arcs. This means that there are exactly three arcs in $A(L_i)$ that are not contained in $A(D)$. It is routine to verify that these three arcs are non-horizontal and pairwise vertex-disjoint for each triple in $\mathcal{K}_i$, and can thus be described as a permutation in $S_3$. This set of three arcs corresponds to the digraph $F_3[i]$. In the third column of Table \ref{fig:configurations}, we give the permutation that describes $F_3[i]$.

Each type-I 2-factorization of $\vec{C}_n \wr \vec{C}_3$ can be described by a unique $n$-tuple $(t_0, t_1, \ldots, t_{n-1})$, where $t_i \in \mathcal{K}_i$.  Our objective is to show that no $n$-tuples  $(t_0, t_1, \ldots,$ $t_{n-1})$, where $t_i \in \mathcal{K}_i$, give rise to decomposition of $\vec{C}_n \wr \vec{C}_3$ into directed hamiltonian cycles when $n$ is even.

\begin{definition} \rm
Let $\mathcal{F}=(F_0, F_1, F_2, F_3)$ be a type-I 2-factorization of $\vec{C}_n \wr \vec{C}_3$. The $n$-tuple of elements $(t_0, t_1, \ldots, t_{n-1})$, where $t_i=\{F_0[i], F_1[i], F_2[i]\}$,  is the \textit{spine of $\mathcal{F}$}. 
\end{definition}

Our next objective is to describe each spine as an $n$-tuple of elements of a particular group constructed in Definition \ref{defn:group} below. To do so, we first introduce the following definition. 

\begin{definition} \rm
 \label{def:hor}
Let $\mathcal{F}=(F_0, F_1, F_2, F_3)$ be a type-I 2-factorization of $\vec{C}_n \wr \vec{C}_3$ with spine $(t_0, t_1, \ldots, t_{n-1})$.  In addition, for $i \in\mathds{Z}_n$ and $j \in \{0, 1,2\}$, let  $k_{(i,j)}$ be the element of $\mathds{Z}_3$ such that $F_j$ is the $(i, k_{(i,j)})$-rooted 2-factor. Observe that $\{k_{(i,0)}, k_{(i,1)}, k_{(i,2)}\}=\mathds{Z}_3$. For each $i \in\mathds{Z}_n$, we let 

\smallskip
{\centering
  $ \displaystyle
    \begin{aligned} 
&\sigma_{t_i}: \mathds{Z}_3 \mapsto \mathds{Z}_3, &(k_{(i,j)})^{\sigma_{t_i}}=k_{(i+1,j)} 
    \end{aligned}
    $
\par
\smallskip}

\noindent for all $j \in \{0,1,2\}$. We point out that $\{k_{(i+1,0)}, k_{(i+1,1)}, k_{(i+1,2)}\}=\mathds{Z}_3$ for all $i \in \mathds{Z}_n$ because $F_0, F_1$, and $F_2$ are arc-disjoint. Therefore, the function $\sigma_{t_i}$ is a permutation of $\mathds{Z}_3$ and is called the \textit{permutation of the horizontal arcs by $t_i$}.
\end{definition}

In the second column of Table \ref{fig:configurations}, we give the corresponding permutation of the horizontal arcs by $T_s$ for each $s \in \{1,2, \ldots, 24\}$. 

Using Definition \ref{def:hor}, we will now derive a necessary condition on $n$-tuples $(t_0, t_1, \ldots, t_{n-1})$, where $t_i \in \mathcal{K}_i$, that correspond to a 2-factorization of $\vec{C}_n \wr \vec{C}_3$. This necessary condition is given in Corollary \ref{cor:firststep}. First, we prove the following lemma. 

\begin{lemma}
\label{lem:phis}
Let  $\mathcal{F}=(F_0, F_1, F_2, F_3)$ be a type-I 2-factorization of $\vec{C}_n \wr \vec{C}_3$ with spine $(t_0, t_1, \ldots,$ $t_{n-1})$, $i \in\mathds{Z}_n$. For each $i \in\mathds{Z}_n$,  let  $\mu_i=\sigma_{t_0}\sigma_{t_1}\cdots\sigma_{t_{i}}$. Then, for $j \in \{0,1,2\}$, the type-1 2-factor $F_j$ is the $(i+1, j^{\mu_i})$-rooted 2-factor of $\mathcal{F}$. 
\end{lemma}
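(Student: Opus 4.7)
The plan is to proceed by straightforward induction on $i$, with the entire content of the lemma being a careful unwinding of the definition of $\sigma_{t_i}$. The key observation is that, by its very construction, $\sigma_{t_i}$ tracks how the ``horizontal-arc root'' of each $F_j$ changes as we pass from $V_i$ to $V_{i+1}$: if $F_j$ is the $(i,k)$-rooted 2-factor of $\mathcal{F}$, then by Definition \ref{def:hor} we have $k_{(i,j)}=k$, hence $k_{(i+1,j)}=k^{\sigma_{t_i}}$, so $F_j$ is the $(i+1,k^{\sigma_{t_i}})$-rooted 2-factor.

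For the base case $i=0$, Assumption \ref{not:Fi} says exactly that $F_j$ is the $(0,j)$-rooted 2-factor of $\mathcal{F}$, so $k_{(0,j)}=j$. Applying the definition of $\sigma_{t_0}$ then yields $k_{(1,j)}=j^{\sigma_{t_0}}=j^{\mu_0}$, which is what is claimed for $i=0$.

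For the inductive step, I would suppose that the conclusion holds for $i-1$, i.e.\ that $F_j$ is the $(i,j^{\mu_{i-1}})$-rooted 2-factor, so that $k_{(i,j)}=j^{\mu_{i-1}}$. Then a single application of the definition of $\sigma_{t_i}$ gives
\[
k_{(i+1,j)} \;=\; \bigl(k_{(i,j)}\bigr)^{\sigma_{t_i}} \;=\; \bigl(j^{\mu_{i-1}}\bigr)^{\sigma_{t_i}} \;=\; j^{\mu_{i-1}\sigma_{t_i}} \;=\; j^{\mu_i},
\]
where the third equality is the compatibility of the right action of $S_3$ on $\mathds{Z}_3$ with composition, and the last equality is simply the definition $\mu_i=\mu_{i-1}\sigma_{t_i}$. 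Hence $F_j$ is the $(i+1,j^{\mu_i})$-rooted 2-factor, completing the induction.

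There is essentially no obstacle here beyond bookkeeping: the statement is a tautology once one parses Definition \ref{def:hor} correctly, so the only thing to be careful about is the side-convention for the action of permutations (exponential notation, left-to-right composition) to make sure $\mu_{i-1}\sigma_{t_i}$ indeed acts as $\sigma_{t_i}$ applied after $\mu_{i-1}$. The real work of the paper will come in the subsequent lemma, where this tracking is used to translate the hamiltonicity of the $F_j$'s into a product condition on the $\sigma_{t_i}$'s that can then be shown to be unsatisfiable for even $n$.
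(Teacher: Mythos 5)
Your proof is correct and follows essentially the same induction as the paper's, which carries out the identical argument for $j=0$ and notes that $F_1$ and $F_2$ are analogous; your version simply handles all $j$ uniformly. No issues.
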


\begin{proof}
We prove the statement for $F_0$. Analogous reasoning applies to $F_1$ and $F_2$. For each $i\in \mathds{Z}_n$, $F_0$ is the $(i, k_{(i,0)})$-rooted 2-factor of $\mathcal{F}$. Note that $k_{(0,0)}=0$ since, by Assumption \ref{not:Fi},  $F_0$ is the $(0,0)$-rooted 2-factor of $\mathcal{F}$.  We prove the statement by way of induction on $i$. 

First, we consider the base case $j=1$. Then $\mu_0=\sigma_{t_0}$. By Definition \ref{def:hor}, we have that $0^{\sigma_{t_0}}=k_{(1,0)}$. The statement follows because $F_0$ is the $(1, k_{(1,0)})$-rooted 2-factor. Now, we assume that $F_0$ is the $(i, 0^{\mu_{i-1}})$-rooted 2-factor of $\mathcal{F}$.  By Definition \ref{def:hor}, this means that $0^{\mu_{{i-1}}}=k_{(i,0)}$. Therefore, we have $0^{\mu_i}=(0^{\mu_{i-1}})^{\sigma_{t_i}}=k_{(i,0)}^{\sigma_{t_i}}=k_{(i+1,0)}$.  The directed 2-factor $F_0$ is a $(i+1, 0^{\mu_i})$-rooted 2-factor, as claimed. \end{proof}

\begin{corollary}
\label{cor:firststep}
Let  $\mathcal{F}=(F_0, F_1, F_2, F_3)$ be a type-I 2-factorization of $\vec{C}_n \wr \vec{C}_3$ with spine  $(t_0, t_1, \ldots,$ $t_{n-1})$, and let $\mu=\sigma_{t_0}\sigma_{t_1}\cdots\sigma_{t_{n-1}}$. Then $\mu=id$. 
\end{corollary}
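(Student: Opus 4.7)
The plan is to derive the corollary as a direct consequence of Lemma \ref{lem:phis} together with Assumption \ref{not:Fi}, by unwrapping the indexing of $\mathds{Z}_n$ modulo $n$. The key observation is that Lemma \ref{lem:phis} tracks, as $i$ increases from $0$ to $n-1$, which horizontal arc of $C_3^{i+1}$ lies in each type-$1$ $2$-factor $F_j$; once we wrap around from $V_{n-1}$ back to $V_0$, we must land back at the configuration fixed by Assumption \ref{not:Fi}.

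First, I would apply Lemma \ref{lem:phis} with $i=n-1$. This gives that for each $j \in \{0,1,2\}$, the directed $2$-factor $F_j$ is the $(n,\, j^{\mu_{n-1}})$-rooted $2$-factor of $\mathcal{F}$, where $\mu_{n-1} = \sigma_{t_0}\sigma_{t_1}\cdots\sigma_{t_{n-1}} = \mu$. Since indices in the first coordinate are taken modulo $n$, the vertex set $V_n$ is exactly $V_0$, so the statement becomes: $F_j$ is the $(0,\, j^{\mu})$-rooted $2$-factor of $\mathcal{F}$.

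Next, I would invoke Assumption \ref{not:Fi}: by definition, for each $j \in \{0,1,2\}$, $F_j$ is the $(0,j)$-rooted $2$-factor of $\mathcal{F}$. A given type-$1$ $2$-factor is $(0,k)$-rooted for a unique $k \in \mathds{Z}_3$ (it contains exactly one horizontal arc of $C_3^0$), so combining these two descriptions forces $j^{\mu} = j$ for every $j \in \mathds{Z}_3$. Hence $\mu$ fixes all three elements of $\mathds{Z}_3$, and therefore $\mu = \mathrm{id}$.

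There is no real obstacle here; the content of the result is already packaged in Lemma \ref{lem:phis}, and the corollary is just the closing-up condition when the cyclic index returns to $0$. The only thing to be slightly careful about is to note explicitly that the choice of starting label in Assumption \ref{not:Fi} ($F_j$ is $(0,j)$-rooted) is what makes $\mu$ the identity rather than some other fixed permutation --- had a different labeling been chosen, the conclusion would be that $\mu$ is a particular element of $S_3$ determined by the labeling, but with this normalization it is the identity.
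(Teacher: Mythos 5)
Your proposal is correct and follows essentially the same route as the paper: apply Lemma \ref{lem:phis} at $i=n-1$, use $V_n=V_0$ to conclude $F_j$ is the $(0,j^{\mu})$-rooted 2-factor, and compare with the normalization in Assumption \ref{not:Fi} that $F_j$ is the $(0,j)$-rooted 2-factor. The only difference is that you spell out the uniqueness of the root (each type-1 2-factor contains exactly one horizontal arc of $C_3^0$), which the paper leaves implicit.
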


\begin{proof}
It follows from Lemma \ref{lem:phis} that, for $j \in \{0,1,2\}$,  $F_j$ is the $(0, j)$-rooted 2-factor and the $(0, j^{\mu})$-rooted 2-factor of $\mathcal{F}$. If $\mu\neq id$, then we have a contradiction. 
\end{proof}

Corollary \ref{cor:firststep} is the first step towards our main objective, which is to show that no $n$-tuples $(t_0, t_1, \ldots, t_{n-1})$ will give rise to a hamiltonian type-I 2-factorization of $\vec{C}_n \wr \vec{C}_3$ when $n$ is even. To that end, we now introduce another function which applies to any type-1 2-factor. 

\begin{definition} \rm
\label{defn:switch}
Let $\mathcal{F}=\{F_0, F_1, F_2, F_3\}$ be a directed 2-factorization of $\vec{C}_n \wr \vec{C}_3$. If $F_j\in \mathcal{F}$ is a type-1 2-factor and both dipaths of $F_j[i]$ are of length two, then $F_j$ is said to be \textit{switched at $i$}. We then define the following function:
$$
s_i[F_j]=
\begin{cases}
1,\ \textrm{if $F_j$ is switched at $i$};\\
0,\ \textrm{otherwise}.
\end{cases}
$$
\end{definition}

The type-1 2-factor $F_j$ is switched at $i$ precisely when the dipath of $F_j[i]$ that contains the horizontal arc of $C^i_3$ does not contain the horizontal arc of $C_3^{j+1}$. For a type-I 2-factorization $\mathcal{F}=(F_0, F_1, F_2, F_3)$ with $i\in \{0,1,2\}$, the type-1 2-factor $F_j$ is switched at $i$ if and only if 

\begin{center}
$F_j[i] \in \{ \rho^k(M^0_0[i]), \rho^k(M^0_2[i]), \rho^k(M^0_4[i])\ | \ k=0,1,2\}$. 
\end{center}

\noindent Refer to Figures \ref{fig:sw}, \ref{fig:sw1}, and \ref{fig:sw2} for an illustration of $M^0_0[i], M^0_2[i],$ and $M^0_4[i]$, respectively. 

We now use Definition \ref{defn:switch} to show that, if a type-I 2-factor of $\vec{C}_n\wr \vec{C}_3$ is a directed hamiltonian cycle, then we have the following necessary condition. 

\begin{lemma}
\label{lem:switch}
Let $\mathcal{F}=\{F_0, F_1, F_2, F_3\}$ be a directed 2-factorization of $\vec{C}_n \wr \vec{C}_3$. A type-1 2-factor $F_j\in \mathcal{F}$ is a directed hamiltonian cycle of $\vec{C}_n \wr \vec{C}_3$ only if $\sum_{j=0}^{n-1}s_i[F_j]$ is odd. 
\end{lemma}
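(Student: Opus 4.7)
The plan is to contract the $n$ horizontal arcs of $F_j$ to obtain a $2$-regular digraph $\widetilde{F}_j$ on $2n$ vertices whose cycle count equals that of $F_j$, and then to read off this cycle count from the parity of $\sum_i s_i[F_j]$. Since $F_j$ is a type-$1$ $2$-factor, for each $i \in \mathds{Z}_n$ it contains exactly one horizontal arc of $V_i$, say $(i_{a_i}, i_{a_i+1})$. I would collapse this arc to a single pseudo-vertex $H_i$ and denote by $S_i$ the remaining vertex $i_{a_i+2}$ of $V_i$. Then $\widetilde{F}_j$ is the digraph on $\{H_0, S_0, \ldots, H_{n-1}, S_{n-1}\}$ whose arcs are the $2n$ non-horizontal arcs of $F_j$, with endpoints replaced by the pseudo-vertex to which they belong. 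Contracting an arc of a disjoint union of directed cycles neither merges nor splits cycles, and no cycle of $F_j$ collapses to a loop (since a cycle confined to a single $V_i$ would use all three horizontal arcs of $V_i$, contradicting type-$1$); hence $F_j$ is a single directed hamiltonian cycle of $\vec{C}_n \wr \vec{C}_3$ if and only if $\widetilde{F}_j$ is a single directed $2n$-cycle.

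The second step is to identify the local structure of $\widetilde{F}_j$ between consecutive layers. Because $i_{a_i}$ uses its unique outgoing arc horizontally, the tails of the two non-horizontal arcs of $F_j$ leaving $V_i$ are $i_{a_i+1} \in H_i$ and $i_{a_i+2} \in S_i$; symmetrically, because $(i+1)_{a_{i+1}+1}$ uses its unique incoming arc horizontally, their heads in $V_{i+1}$ are $(i+1)_{a_{i+1}} \in H_{i+1}$ and $(i+1)_{a_{i+1}+2} \in S_{i+1}$. Hence the two arcs of $\widetilde{F}_j$ between layers $i$ and $i+1$ form either a parallel pair $\{H_i \to H_{i+1},\, S_i \to S_{i+1}\}$ or a crossed pair $\{H_i \to S_{i+1},\, S_i \to H_{i+1}\}$. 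I would then show that this dichotomy matches the switched/non-switched dichotomy: when $F_j$ is not switched at $i$, the length-$3$ component of $F_j[i]$ must be the dipath $i_{a_i}\, i_{a_i+1}\, (i+1)_{a_{i+1}}\, (i+1)_{a_{i+1}+1}$ and its length-$1$ companion is forced to be $i_{a_i+2}\, (i+1)_{a_{i+1}+2}$, giving the parallel pair; when $F_j$ is switched at $i$, a length-$2$ dipath of $F_j[i]$ starting with the horizontal arc of $V_i$ cannot end at $(i+1)_{a_{i+1}}$ (else it would extend to length $3$ via the horizontal arc of $V_{i+1}$), so it must terminate at $S_{i+1}$, forcing the crossed pair. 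Thus the arcs between layers $i$ and $i+1$ in $\widetilde{F}_j$ are crossed if and only if $s_i[F_j] = 1$.

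Finally, I perform a parity count on $\widetilde{F}_j$. Starting from $H_0$ and following $\widetilde{F}_j$, a parallel step preserves the current layer ($H$ or $S$) and a crossed step toggles it, so after $n$ steps one returns to the $V_0$ column at layer $H$ if $\sum_{i=0}^{n-1} s_i[F_j]$ is even and at layer $S$ if it is odd. In the even case, $\widetilde{F}_j$ is a disjoint union of two $n$-cycles, and $F_j$ is not hamiltonian; in the odd case, $\widetilde{F}_j$ is a single $2n$-cycle, and $F_j$ is a directed hamiltonian cycle of $\vec{C}_n \wr \vec{C}_3$. I expect the main obstacle to be formalising the crossed-versus-switched equivalence cleanly, since it ultimately rests on the in-degree-$1$ constraint forced by $F_j$ being a $2$-factor rather than on any purely local property of $F_j[i]$; once that is in place, the parity count is routine.
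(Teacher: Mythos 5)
Your proof is correct, and in fact it establishes the stronger ``if and only if'' statement: $F_j$ is hamiltonian precisely when $\sum_{i=0}^{n-1}s_i[F_j]$ is odd. The underlying combinatorial content is the same parity count as in the paper --- in both arguments the switched/non-switched dichotomy at layer $i$ records whether the two ``strands'' of $F_j$ cross between $V_i$ and $V_{i+1}$, and hamiltonicity forces an odd number of crossings --- but the packaging differs. The paper assumes $F_j$ is a hamiltonian cycle, splits it at $V_0$ into two dipaths $P_j$ and $Q_j$ (one carrying the horizontal arc of $C_3^0$, the other avoiding $0_{j+1}$), tracks which of the two carries the horizontal arc of each layer, and reaches a contradiction at the wrap-around from $V_{n-1}$ to $V_0$ when the switch count is even. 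You instead contract the $n$ horizontal arcs to obtain a $2$-regular digraph on two tracks $H_i,S_i$ and read the cycle count of $F_j$ directly off the parity of crossed layers; this avoids presupposing hamiltonicity and yields the converse for free. Your worry about the crossed-versus-switched equivalence is unfounded: since $F_j$ is a type-$1$ $2$-factor, $F_j[i]$ consists of exactly the two horizontal arcs of $C_3^i$ and $C_3^{i+1}$ together with two non-horizontal arcs whose tails are $i_{a_i+1},i_{a_i+2}$ and whose heads are $(i+1)_{a_{i+1}},(i+1)_{a_{i+1}+2}$, so the $(3,1)$-versus-$(2,2)$ case analysis you sketch is a purely local and complete verification.
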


\begin{proof}
Assume that $F_j$ is a type-1 2-factor that contains the arc $(0_i, 0_{i+1})$. If $F_j$ is a directed hamiltonian cycle, then it is not difficult to see that $F_j$ can be expressed as the concatenation of two dipaths $P_j$ and $Q_j$ such that
\begin{enumerate}[label=(C\arabic*)]
\item $F_j=P_jQ_j$;
\item $s(P_j)=0_j$, $t(P_j)=0_{j+2}$, and $P_j$ contains the horizontal arc $(0_j,  0_{j+1})$;
\item  $s(Q_j)=0_{j+2}$, $t(Q_j)=0_j$, and $0_{j+1} \not\in V(Q_j)$.
\end{enumerate}

\noindent Because $F_j$ is a type-1 2-factor, dipaths $P_j$ and $Q_j$ must jointly use exactly one horizontal arc from each $C^i_3$ for each $i \in\mathds{Z}_n$. By (C2), $P_j$ is the dipath that contains the horizontal arc of $C^0_3$. For $j \in \{0,1, \ldots, n-2\}$, if $F_j$ is switched at $i$ and $P_j$ contains a horizontal from $C^i_3$, then $Q_j$ must be the dipath that contains a horizontal arc from $C^{i+1}_3$ and vice versa.  

Suppose that $\sum_{j=0}^{n-1}s_i[F_j]$ is even. If $s_{n-1}[F_j]=1$, then $\sum_{j=0}^{n-2}s_i[F_j]$ is odd and $Q_j$ must be the dipath that contains the horizontal arc of $F_j$ from $C_3^{n-1}$. However, since $F_j$ is switched at $n-1$, and $F_j$ contains the arc $(0_i, 0_{i+1})$, then $t(P_j)=0_i$, a contradiction. Similarly, if $s_{n-1}[F_j]=0$, then $\sum_{j=0}^{n-2}s_i[F_j]$ is even and $P_j$ must be the dipath of $\mathcal{F}$ that contains the horizontal arc from $C_3^{n-1}$ and thus $t(P_j)=0_i$ since $F_j$ is not switched at $n-1$. In conclusion, it follows that $\sum_{j=0}^{n-1}s_i[F_j]$ is odd. \end{proof}

We will now describe each element of $\mathcal{K}_i$, defined in Notation \ref{not:Ki}, using an element of a group constructed in Definition \ref{defn:group} below. To construct this group, we will be using the semi-direct product of two particular groups. Therefore, we first define the semi-direct product of two groups in Definition \ref{defn:semi} below. 

\begin{definition}\rm 
\label{defn:semi} 
Let $\Gamma_1$ and $\Gamma_2$ be two groups and let $\phi: \Gamma_2 \mapsto \textrm{Aut}(\Gamma_1)$ where $\phi$ is a group homomorphism. We let $\phi(k)=\phi_k$ for each $k \in \Gamma_2$. The \textit{semi-direct product} of $\Gamma_1$ with $\Gamma_2$, denoted $\Gamma_1 \rtimes_{\phi} \Gamma_2$, is the group with elements $\{(h, k)\ | \ h\in \Gamma_1, k \in \Gamma_2\}$ and binary operation
\begin{center}
$(h, k)(h', k')=(h\phi_k(h'), kk')$. 
\end{center}
\end{definition}

\begin{definition}\rm
\label{defn:group}
Let $\Gamma_1=\mathds{Z}_2\oplus\mathds{Z}_2\oplus \mathds{Z}_2=\mathds{Z}_2^3$, where $\mathds{Z}_2$ is the additive group on two elements and $\oplus$ denotes the direct sum of two groups, and let $\Gamma_2=S_3 \oplus S_3$. Furthermore, for $(a_0, a_1, a_2) \in \mathds{Z}_2^3$ and $(\sigma, \gamma) \in S_3\oplus S_3$, we define $\phi: \Gamma_2 \mapsto \textrm{Aut}(\Gamma_1)$ by

\smallskip
{\centering
  $ \displaystyle
    \begin{aligned} 
&\phi_{(\sigma,~\gamma)}: \mathds{Z}_2^3\mapsto\mathds{Z}_2^3, \\
&\phi_{(\sigma,~\gamma)}(a_0, a_1, a_2)=(a_{0^{\sigma}}, a_{1^{\sigma}}, a_{2^{\sigma}}).
    \end{aligned}
    $
\par
\smallskip}

\noindent Then, we define $\Gamma_{\mathcal{F}} =\Gamma_1 \rtimes_{\phi} \Gamma_2$. 
\end{definition}

In Definition \ref{defn:group}, the permutation $\gamma$ acts trivially on $\Gamma_1$. It is then straightforward to verify that $\phi_{(\sigma, \gamma)} \in \textrm{Aut}( \Gamma_1)$ and that $\phi$ is a group homomorphism. 

\begin{remark} \rm If $g_a=((a_0, a_1, a_2), (\sigma_1, \gamma_1))$ and $g_b=((b_0, b_1, b_2), (\sigma_2, \gamma_2))$ such that $g_a, g_b \in \Gamma_{\mathcal{F}}$, then 
$g_ag_b= ((a_0+b_{0^{\sigma_1}}, a_1+b_{1^{\sigma_1}}, a_2+b_{2^{\sigma_1}}), (\sigma_1 \sigma_2, \gamma_1 \gamma_2)),$ with addition done in $\mathds{Z}_2$. 
\end{remark}

\begin{definition}\rm
\label{defn:elements}
Let $(F_0, F_1, F_2, F_3)$ be a type-I 2-factorization of $\vec{C}_n \wr \vec{C}_3$ with spine $(t_0, t_1, \ldots, t_{n-1})$. For each $i \in \mathds{Z}_m$, let $f_i: \mathcal{K}_i \mapsto \Gamma_{\mathcal{F}}$, such that 
$f_i(t_i)=((a_0, a_1, a_2), (\sigma_{t_i}, F_3[i]))$ where $F_3[i] \in S_3$ and $a_i=1$ if the $(j,i)$-rooted 2-factor is switched at $i$ or $a_i=0$ otherwise. 
\end{definition}

\begin{notation} \rm
\label{def:S} 
By $\mathcal{S}$, we denote the set of elements of $\Gamma_{\mathcal{F}}$ of the form $f_i(T)$ for some $T \in \mathcal{K}_i$. \end{notation}

In the fourth column of Table \ref{fig:configurations}, we give the element of $\mathcal{S}$ corresponding to each triple in $\mathcal{K}_i$. 

We now give necessary conditions for a type-I  2-factorization of $\vec{C}_n \wr \vec{C}_3$ with spine $(t_0, t_1, \ldots, t_{n-1})$, where $t_i \in \mathcal{K}_i$, to be a hamiltonian decomposition. 

\begin{lemma}
\label{prop:iff}
Let  $\mathcal{F}=(F_0, F_1, F_2, F_3)$ be a type-I 2-factorization of $\vec{C}_n \wr \vec{C}_3$ with spine  $(t_0, t_1, \ldots,$ $t_{n-1})$ such that
$(f_0(t_0), f_1(t_1), \ldots, f_{n-1}(t_{n-1}))=((h_0, k_0), (h_1, k_1), (h_2, k_2), \ldots, (h_{n-1}, k_{n-1}))$. Let $(g_1, g_2)=(h_0, k_0)(h_1, k_1)\cdots (h_{n-1}, k_{n-1})$. Then $\mathcal{F}$  is a hamiltonian decomposition of $\vec{C}_n \wr \vec{C}_3$ only if  $(g_1, g_2) \in \{((1,1,1), (id, (012))), ((1,1,1), (id, (021)))\}$.
\end{lemma}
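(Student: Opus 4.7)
The plan is to translate the hamiltonicity of each of the four 2-factors $F_0, F_1, F_2, F_3$ into constraints on the two components $g_1 \in \mathds{Z}_2^3$ and $g_2 \in S_3 \oplus S_3$ of the group element $(g_1,g_2)$, drawing exclusively on lemmas established earlier in this subsection. The second component of $(g_1,g_2)$ is a componentwise product of $n$ elements of $S_3 \oplus S_3$, so it records $\sigma_{t_0}\sigma_{t_1}\cdots\sigma_{t_{n-1}}$ in its first coordinate and $F_3[0]F_3[1]\cdots F_3[n-1]$ in its second. The first component $g_1$ is where the semi-direct twist matters; its three $\mathds{Z}_2$-coordinates will turn out to parameterise the switching parity of $F_0$, $F_1$, and $F_2$.

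First I would dispatch $g_2$. Its first coordinate equals $id$ by Corollary \ref{cor:firststep}, which expresses precisely the fact that the rooting of each $F_j$ returns to its initial value after traversing $\vec{C}_n$. For its second coordinate, the 2-factor $F_3$ is of type 0, so its arcs are described entirely by the $n$-tuple of permutations $(F_3[0],\ldots,F_3[n-1])$; by the remark following Notation \ref{not:Sm}, $F_3$ is a directed hamiltonian cycle if and only if $T(F_3[0]F_3[1]\cdots F_3[n-1]) = 1$. The only elements of $S_3$ whose disjoint-cycle decomposition has a single cycle are the $3$-cycles $(012)$ and $(021)$, so the second coordinate of $g_2$ is forced into $\{(012),(021)\}$.

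Next I would handle $g_1$ by unpacking the semi-direct product. Writing $h_i = (a_0^{(i)}, a_1^{(i)}, a_2^{(i)})$ and $k_i = (\sigma_{t_i}, F_3[i])$, a routine induction using the product rule $(h,k)(h',k') = (h + \phi_k(h'),\, kk')$ yields
\[
g_1 \;=\; \Bigl( \sum_{i=0}^{n-1} a_{0^{\mu_{i-1}}}^{(i)}, \; \sum_{i=0}^{n-1} a_{1^{\mu_{i-1}}}^{(i)}, \; \sum_{i=0}^{n-1} a_{2^{\mu_{i-1}}}^{(i)} \Bigr) \pmod{2},
\]
where $\mu_{-1} = id$ and $\mu_i = \sigma_{t_0}\sigma_{t_1}\cdots\sigma_{t_i}$ for $i \geqslant 0$. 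Lemma \ref{lem:phis} tells us that at stage $i$ the 2-factor $F_j$ is the $(i, j^{\mu_{i-1}})$-rooted 2-factor of $\mathcal{F}$, so Definition \ref{defn:elements} gives the identity $a_{j^{\mu_{i-1}}}^{(i)} = s_i[F_j]$. Substituting into the displayed formula, the $j$-th coordinate of $g_1$ equals $\sum_{i=0}^{n-1} s_i[F_j] \pmod 2$ for each $j \in \{0,1,2\}$. Lemma \ref{lem:switch} forces each of these three sums to be odd whenever the corresponding $F_j$ is a directed hamiltonian cycle, so $g_1 = (1,1,1)$. Combining with the analysis of $g_2$ delivers the two admissible values of $(g_1,g_2)$ in the conclusion.

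The only step requiring genuine care is the inductive verification of the closed form for $g_1$: one has to track how $\phi_{k_0 k_1 \cdots k_{i-1}}$ permutes the $\mathds{Z}_2^3$-coordinates of $h_i$ at each successive multiplication, and then align the twisted index $j^{\mu_{i-1}}$ with the rooting label produced by Lemma \ref{lem:phis}. Once that bookkeeping of exponents and roots is in place, the rest of the argument is a direct extraction from the preceding lemmas, and I expect the main subtlety of the proof to live in exactly this reindexing.
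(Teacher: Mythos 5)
Your proposal is correct and follows essentially the same route as the paper's proof: Corollary \ref{cor:firststep} for the first coordinate of $g_2$, the single-cycle criterion for $F_3$ giving the second coordinate, and the expansion of the semi-direct product combined with Lemma \ref{lem:phis}, Definition \ref{defn:elements}, and Lemma \ref{lem:switch} to force $g_1=(1,1,1)$. The only cosmetic difference is that you fold the $h_0$ term into the sum via the convention $\mu_{-1}=id$, where the paper writes it separately.
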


\begin{proof}
We assume that $\mathcal{F}=(F_0, F_1, F_2, F_3)$ is a hamiltonian type-I 2-factorization of $\vec{C}_n \wr \vec{C}_3$. Let $F_3[i]=\gamma_i$, $k_i=(\sigma_{t_i}, \gamma_i)$, and let $g_2=(\mu, \gamma)\in S_3\oplus S_3$. We see that

\begin{center}
 $\mu=\sigma_{t_0}\sigma_{t_1}\cdots\sigma_{t_{n-1}}$, and $\gamma=\gamma_0\gamma_1 \cdots\gamma_{n-1}$. 
\end{center}

\noindent Corollary \ref{cor:firststep} implies that $\mathcal{F}$ is a 2-factorization of $\vec{C}_n \wr \vec{C}_3$ only if $\mu=id$. In addition, since $\gamma_i =F_3[i]$, it follows that $F_3$ is a directed hamiltonian cycle only if $\gamma$ is a permutation comprised of a single cycle. There are exactly two permutations in $S_3$ comprised of a single cycle. Therefore, $\gamma \in \{ (012), (021)\}$.  In conclusion, $g_2=(\mu, \gamma)\in \{(id, (012)), (id, (021))\}$. 

Next, we show that $g_1=(1,1,1)$. Let $h_i=(x^i_0, x^i_1, x^i_2)\in \mathds{Z}_2\oplus\mathds{Z}_2\oplus \mathds{Z}_2$, and let 
\begin{center}
$\mu_{i-1}=\sigma_{t_0}\sigma_{t_1}\cdots\sigma_{t_{i-1}}$ and $\omega_{i-1}=\gamma_0\gamma_1\dots \gamma_{i-1}$ 
\end{center}

\noindent where $i \in \{1,2, \ldots, n-1\}$.  We see that

\smallskip
{\centering
  $ \scriptstyle
    \begin{aligned} 
 g_1&=&&h_0+\sum_{j=1}^{n-1} \phi_{(\mu_{i-1}, \omega_{i-1})}(h_i)\\
 &=&&h_0+\phi_{(\mu_{0}, \omega_0)}(h_1)+\phi_{(\mu_{1}, \omega_1)}(h_2)+\phi_{(\mu_{2}, \omega_2)}(h_3)+\cdots+\phi_{(\mu_{n-2}, \omega_{n-2})}(h_{n-1}).
      \end{aligned}
  $ 
\par}
\smallskip

We claim that $\phi_{(\mu_{i-1}, \omega_{i-1})}(h_i)=(s_i[F_0], s_i[F_1], s_i[F_2])$. First, we point out that $\phi_{(\mu_{i-1}, \omega_{i-1})}(h_i)=(x^i_{0^{\mu_{i-1}}}, x^i_{1^{\mu_{i-1}}}, x^i_{2^{\mu_{i-1}}})$. Recall that, by Lemma \ref{lem:phis}, $F_j$ is the $(i, j^{\mu_{i-1}})$-rooted 2-factor of $\mathcal{D}$. If $F_j$ is switched at $i$, then $x^i_{j^{\mu_i}}=1$ by Definition \ref{defn:elements} and $s_i[F_j]=1$ by Definition \ref{defn:switch}. Consequently, $x^i_{j^{\mu_{i-1}}}=1=s_i[F_j]$. Otherwise, $x_{j^{\mu_{i-1}}}=0=s_i[F_j]$. It follows that $(x^i_{0^{\mu_{i-1}}}, x^i_{1^{\mu_{i-1}}}, x^i_{2^{\mu_{i-1}}})=(s_i[F_0], s_i[F_1], s_i[F_2])$, as desired. 

We then see that

\smallskip
{\centering
  $ \scriptstyle
    \begin{aligned} 
 g_1&=&&(x_0^0+\sum_{i=1}^{n-1} x_{0^{\mu_{i-1}}}^i, x_1^0+\sum_{i=1}^{n-1} x_{1^{\mu_{i-1}}}^i, x_2^0+\sum_{i=1}^{n-1} x_{2^{\mu_{i-1}}}^i)\\
	&=&&(x_0^0+\sum_{i=1}^{n-1} s_i[F_0], x_1^0+\sum_{i=1}^{n-1} s_i[F_1], x_2^0+\sum_{i=1}^{n-1} s_i[F_2])\\
	&=&&(\sum_{i=0}^{n-1} s_i[F_0], \sum_{i=0}^{n-1} s_i[F_1], \sum_{i=0}^{n-1} s_i[F_2]).\\
      \end{aligned}
  $ 
\par}
\smallskip

Since $F_0$, $F_1$, and $F_2$ are directed hamiltonian cycles, Lemma \ref{lem:switch} implies that $\sum_{i=0}^{n-1}s_i[F_j]$ is odd for each $j\in \{0, 1,2\}$. Because $g_1\in \mathds{Z}_2\oplus\mathds{Z}_2\oplus\mathds{Z}_2$, then $g_1=(1,1,1)$. In summary, $\mathcal{F}$ is a hamiltonian 2-factorization only if  $(g_1, g_2) \in \{ ( (1,1,1), (id, (012))), ((1,1,1), (id, (021)))\}$.\end{proof}
 
\begin{proposition}
\label{thm:typ1}
Let $n$ be an even integer. The digraph $\vec{C}_n \wr \vec{C}_3$ does not admit a type-I hamiltonian 2-factorization. 
\end{proposition}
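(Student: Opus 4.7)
The plan is to leverage Lemma \ref{prop:iff}. Suppose for contradiction that $\mathcal{F} = (F_0, F_1, F_2, F_3)$ is a type-I hamiltonian $2$-factorization of $\vec{C}_n \wr \vec{C}_3$ with spine $(t_0, t_1, \ldots, t_{n-1})$. Then the product $f_0(t_0) f_1(t_1) \cdots f_{n-1}(t_{n-1})$ in $\Gamma_\mathcal{F}$ equals either $((1,1,1), (id, (012)))$ or $((1,1,1), (id, (021)))$. I would rule this out by exhibiting a group homomorphism $\psi: \Gamma_\mathcal{F} \to \mathds{Z}_2$ that evaluates to $1$ on both targets but is constant on $\mathcal{S}$ with a value forcing the $n$-fold product to have $\psi$-value $0$ whenever $n$ is even.

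Three natural homomorphisms $\Gamma_\mathcal{F} \to \mathds{Z}_2$ are $\psi_1((a_0,a_1,a_2),(\sigma,\gamma)) = a_0+a_1+a_2$, $\psi_2 = \sgn(\sigma)$, and $\psi_3 = \sgn(\gamma)$. The map $\psi_1$ is a homomorphism because the semidirect action $\phi_{(\sigma,\gamma)}$ merely permutes the coordinates of $\mathds{Z}_2^3$ and hence preserves their sum modulo $2$; the other two are pulled back from the sign homomorphism on each copy of $S_3$. Any $\mathds{Z}_2$-linear combination of the $\psi_k$ remains a homomorphism. Both target elements give $(\psi_1, \psi_2, \psi_3) = (1, 0, 0)$, since both the identity and a $3$-cycle are even permutations of $\mathds{Z}_3$.

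The heart of the proof is to identify a $\mathds{Z}_2$-linear combination $\psi = \psi_1 + c_2 \psi_2 + c_3 \psi_3$ that takes the constant value $1$ on $\mathcal{S}$. This would be verified by inspecting the $24$ rows of Table \ref{fig:configurations} in Appendix \ref{appx}: for each triple $T \in \mathcal{K}_i$, one records the number of switched $2$-factors (giving $\psi_1$), the sign of the horizontal-arc permutation $\sigma_{t}$ (giving $\psi_2$), and the sign of $F_3[i]$ (giving $\psi_3$); one then picks the unique $(c_2, c_3) \in \mathds{Z}_2^2$ that renders the combination uniformly equal to $1$. Once such a $\psi$ is secured, the product $f_0(t_0) \cdots f_{n-1}(t_{n-1})$ has $\psi$-value $n \pmod 2 = 0$ for even $n$, contradicting the target value $\psi = 1$ and thereby completing the proof.

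The principal obstacle is this tabulation step: confirming that a single $\mathds{Z}_2$-linear invariant is in fact constant on all $24$ elements of $\mathcal{S}$, and that its value is $1$ rather than $0$ (the latter would also obstruct the odd-$n$ case, where we have no reason to expect an obstruction). Should no linear combination of $\psi_1, \psi_2, \psi_3$ work, a fallback would be to split $\mathcal{K}_i$ into subclasses according to an auxiliary parameter and run a refined parity argument on each piece, or to exploit a non-linear invariant arising from the representation theory of $\Gamma_\mathcal{F}$. Given the symmetric construction of $\mathcal{K}_i$ and the rigidity of the horizontal/non-horizontal arc interaction, however, I expect a single linear invariant to suffice.
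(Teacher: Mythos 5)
Your approach is genuinely different from the paper's: the paper proves this proposition by brute force, using GAP to compute the set $U_2$ of all $576$ pairwise products of elements of $\mathcal{S}$, then $U_4$ and $U_6$, observing the stabilization $U_4=U_6$ and checking disjointness from $A=\{((1,1,1),(id,(0\,1\,2))),((1,1,1),(id,(0\,2\,1)))\}$. Your idea of replacing this computation by a $\mathds{Z}_2$-valued homomorphism on $\Gamma_{\mathcal{F}}$ that separates $A$ from all products of elements of $\mathcal{S}$ is sound, and your three candidate functionals $\psi_1,\psi_2,\psi_3$ are indeed homomorphisms (for $\psi_1$ this is because $\phi_{(\sigma,\gamma)}$ only permutes the coordinates of $\mathds{Z}_2^3$). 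The approach does go through, and it is cleaner and more informative than the GAP computation.

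However, your specific expectation is wrong, and the case you dismissed is the one that actually occurs. Reading off the fourth column of Table \ref{fig:configurations}, the vector $(\psi_1,\psi_2,\psi_3)$ takes only the four values $(0,0,0)$, $(0,1,0)$, $(1,0,1)$, $(1,1,1)$ on the $24$ elements of $\mathcal{S}$. Hence \emph{no} $\mathds{Z}_2$-linear combination of $\psi_1,\psi_2,\psi_3$ is constantly $1$ on $\mathcal{S}$; the unique nontrivial combination that is constant is $\psi=\psi_1+\psi_3$, with constant value $0$. This is still exactly what you need: since $\psi$ is a homomorphism vanishing on $\mathcal{S}$, every product $f_0(t_0)\cdots f_{n-1}(t_{n-1})$ has $\psi$-value $0$, whereas both elements of $A$ have $\psi$-value $1+1+1+\sgn((0\,1\,2))=1$, so by Lemma \ref{prop:iff} no type-I $2$-factorization is a hamiltonian decomposition. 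Your stated reason for rejecting the constant-$0$ scenario --- that it would ``obstruct the odd-$n$ case, where we have no reason to expect an obstruction'' --- is a red herring: the argument then shows that $\vec{C}_n\wr\vec{C}_3$ admits no type-I hamiltonian $2$-factorization for \emph{any} $n\geqslant 2$, which is perfectly consistent with Theorem \ref{thm:ng}, because the hamiltonian decompositions that exist for odd $n$ must then be type-II (the type-II obstruction of Proposition \ref{thm:typ2} genuinely uses the parity of $n$ and disappears when $n$ is odd). Had you run your own tabulation step and trusted it rather than your prior, the proof would be complete; as written, the plan's main branch fails and its fallback clause would have sent you away from the correct invariant. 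The identity $\psi_1=\psi_3$ on $\mathcal{S}$ --- the parity of the number of switched type-1 factors at level $i$ equals the sign of the permutation $F_3[i]$ --- is the real content here, and it would be worth giving it a direct combinatorial proof rather than a $24$-row inspection.
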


\begin{proof}
Let $\mathcal{F}=(F_0, F_1, F_2, F_3)$ be a type-I 2-factorization of $\vec{C}_n \wr \vec{C}_3$ with spine  $(t_0, t_1, \ldots, t_{n-1})$ where $(f(t_0), f(t_1), \ldots, f(t_{n-1}))=((h_0, k_0), (h_1, k_1), (h_2, k_2), \ldots, (h_{n-1}, k_{n-1}))$. Note that $(h_i, k_i) \in \mathcal{S}$ for each $i \in\mathds{Z}_n$. Let $A=\{ ( (1,1,1), (id, (012))), ((1,1,1), (id, (021)))\}$.

First, we start with the case $n=2$. We compute the set of elements

\begin{center}
$U_2=\{ (h_0, k_0)(h_1, k_1)\ | \  (h_0, k_0), (h_1, k_1)\in \mathcal{S}\}$. 
\end{center}

Using GAP, we have evaluated all 576 products. The set $U_2$ contains 126 distinct elements and is disjoint from $A$. By Lemma \ref{prop:iff}, the digraph $\vec{C}_2\wr \vec{C}_3$ does not admit a type-I 2-factorization.   

Next, we consider the case $n=4$. It suffices to construct the set: 

\begin{center}
$U_4=\{ (h'_0, k'_0)(h'_1, k'_1)\ | \  (h'_0, k'_0), (h'_1, k'_1) \in U_2\}$. 
\end{center}

\noindent We see that $U_4$ is the set of the products of all pairs of elements of $U_2$. Again, using GAP, we have computed all 15876 products and have verified that $U_4$ and $A$ are disjoint. By Lemma \ref{prop:iff}, the digraph $\vec{C}_4\wr \vec{C}_3$ does not admit  a type-I 2-factorization.  

For $n\geqslant 6$ and $n$ even, we let

\begin{center}
$U_n=\{  (h'_0, k'_0)(h'_1, k'_1)\  | \   (h'_0, k'_0)\in U_2 \ \textrm{and} \ (h'_1, k'_1)\in U_{n-2}\}$. 
\end{center}

\noindent Our last step is to show that $U_n=U_4$ for all even $n\geqslant 4$. The elements of $U_6$ can be found  by computing all 18144 possible products of the form  $(h'_0, k'_0)(h'_1, k'_1)$, where  $(h'_0, k'_0) \in U_2$ and $(h'_1, k'_1) \in U_4$, using GAP. We then found that $U_4=U_6$. It follows that $U_4=U_6=U_n$ for all $n \geqslant 8$ and $n$ even. Since $U_6$ does not contain any element of $A$, Lemma \ref{prop:iff} implies that, for all even $n\geqslant 6$, $\vec{C}_n \wr \vec{C}_3$ does not admit a type-I 2-factorization that is also a hamiltonian decomposition. \end{proof}

\subsection{Type-II 2-factorization of $\vec{C}_n \wr \vec{C}_3$}
\label{sec:type2}

Our objective is to demonstrate that no type-II 2-factorization of $\vec{C}_n \wr \vec{C}_3$ is a hamiltonian decomposition. Our approach is considerably simpler than that of Subsection \ref{sec:fix}. We will begin by introducing the following assumption which is analogous to Assumption \ref{not:Fi}.

\begin{assumption}\rm
\label{not:Fi}
Let $\mathcal{F}=\{F_0, F_1, F_2, F_3\}$ be a type-II 2-factorization of $\vec{C}_n \wr \vec{C}_3$ comprised of four $2$-factors. We shall assume that $F_0$ is the type-2 2-factor of $\mathcal{F}$, $F_1$ the type-1 2-factor, and $F_2$ and $F_3$  the type-0 2-factors of $\mathcal{F}$. We then write $(F_0, F_1, F_2, F_3)$. 
\end{assumption}

In Figure \ref{fig:allfj.1},  we list all six possible pairs $(F_0[i], F_1[i])$ such that $F_1[i]$ contains the arc $(i_2, i_0)$. These six pairs form the set  $\{M_\ell[i]\ | \ \ell=0, 1, \ldots, 5\}$. Then, for each $ \ell \in \{1,2, \ldots, 6\}$, we let $M_{\ell+6k}[i]=\rho^k(M_\ell)$, where $\rho$ is defined in Definition \ref{defn:rhorho} and $k \in \{1,2\}$. This gives rise to a set of 18 pairs; it is routine to verify that no other pair exists. 

\begin{figure} [htpb!]
\begin{center}
\begin{subfigure}[c]{0.32 \textwidth}
\begin{tikzpicture}[
  very thick,
  every node/.style={circle,draw=black,fill=black!90}, inner sep=2]
    every node/.style={circle,draw=black,fill=black!90}, inner sep=1.5, scale=0.75]
  \node (x0) at (0.0,3.0) [label=above:$0$] [label=left:$i$] {};
  \node (x1) at (1.0,3.0) [label=above:$1$]{};
  \node (x2) at (2.0,3.0)[label=above:$2$] {};
  \node (x3) at (3.0,3.0)[draw=gray, fill=gray, label=above:$0$] {};
  \node (y0) at (0.0,2.0) [label=left:$i+1$] {};
  \node (y1) at (1.0,2.0) {};
  \node (y2) at (2.0,2.0) {};
  \node (y3) at (3.0,2.0)[draw=gray, fill=gray] {};
  
  \path [very thick, draw=color2, postaction={very thick, on each segment={mid arrow}}]
	(x0) to (x1)
	(x1) to  (x2)
	(x2) to (y2)
	(y2) to (y3)
	(y0) to (y1);
  \path [very thick, draw=color8, postaction={very thick, on each segment={mid arrow}}]
  	(x2) to (x3)
  	(x0) to (y0)
	(x1) to  (y1)
	(y1) to  (y2);
	
\end{tikzpicture}
\caption{The pair $M_{1}[i]$.}
\end{subfigure}
\begin{subfigure}[c]{0.32 \textwidth}
\begin{tikzpicture}[
  very thick,
  every node/.style={circle,draw=black,fill=black!90}, inner sep=2]
    every node/.style={circle,draw=black,fill=black!90}, inner sep=1.5, scale=0.75]
  \node (x0) at (0.0,3.0) [label=above:$0$] [label=left:$i$] {};
  \node (x1) at (1.0,3.0) [label=above:$1$]{};
  \node (x2) at (2.0,3.0)[label=above:$2$] {};
  \node (x3) at (3.0,3.0)[draw=gray, fill=gray, label=above:$0$] {};
  \node (y0) at (0.0,2.0) [label=left:$i+1$] {};
  \node (y1) at (1.0,2.0) {};
  \node (y2) at (2.0,2.0) {};
  \node (y3) at (3.0,2.0)[draw=gray, fill=gray] {};
  
  \path [very thick, draw=color2, postaction={very thick, on each segment={mid arrow}}]
	(x0) to (x1)
	(x1) to  (x2)
	(x2) to  (y3)
	(y0) to  (y1)
	(y1) to (y2);
  \path [very thick, draw=color8, postaction={very thick, on each segment={mid arrow}}]
  	(x2) to (x3)
  	(x0) to (y1)
	(x1) to  (y2)
	(y2) to  (y3);
	
\end{tikzpicture}
\caption{The pair $M_{2}[i]$.}
\end{subfigure}
\begin{subfigure}[c]{0.32 \textwidth}
\begin{tikzpicture}[
  very thick,
  every node/.style={circle,draw=black,fill=black!90}, inner sep=2]
    every node/.style={circle,draw=black,fill=black!90}, inner sep=1.5, scale=0.75]
  \node (x0) at (0.0,3.0) [label=above:$0$] [label=left:$i$] {};
  \node (x1) at (1.0,3.0) [label=above:$1$]{};
  \node (x2) at (2.0,3.0)[label=above:$2$] {};
  \node (x3) at (3.0,3.0)[draw=gray, fill=gray, label=above:$0$] {};
  \node (y0) at (0.0,2.0) [label=left:$i+1$] {};
  \node (y1) at (1.0,2.0) {};
  \node (y2) at (2.0,2.0) {};
  \node (y3) at (3.0,2.0)[draw=gray, fill=gray] {};
  \path [very thick, draw=color2, postaction={very thick, on each segment={mid arrow}}]
	(x0) to (x1)
	(x1) to  (x2)
	(x2) to (y1)
	(y1) to (y2)
	(y2) to (y3);
  \path [very thick, draw=color8, postaction={very thick, on each segment={mid arrow}}]
  	(x2) to (x3)
  	(x3) to (y2)
	(x1) to  (y0)
	(y0) to  (y1);

\end{tikzpicture}
\caption{The pair $M_{3}[i]$.}
\end{subfigure}

\vspace{0.5cm}

\begin{subfigure}[c]{0.32 \textwidth}
\begin{tikzpicture}[
  very thick,
  every node/.style={circle,draw=black,fill=black!90}, inner sep=2]
    every node/.style={circle,draw=black,fill=black!90}, inner sep=1.5, scale=0.75]
  \node (x0) at (0.0,3.0) [label=above:$0$] [label=left:$i$] {};
  \node (x1) at (1.0,3.0) [label=above:$1$]{};
  \node (x2) at (2.0,3.0)[label=above:$2$] {};
  \node (x3) at (3.0,3.0)[draw=gray, fill=gray, label=above:$0$] {};
  \node (y0) at (0.0,2.0) [label=left:$i+1$] {};
  \node (y1) at (1.0,2.0) {};
  \node (y2) at (2.0,2.0) {};
  \node (y3) at (3.0,2.0)[draw=gray, fill=gray] {};
  
  \path [very thick, draw=color2, postaction={very thick, on each segment={mid arrow}}]
	(x0) to (x1)
	(x1) to  (x2)
	(x2) to (y2)
	(y2) to (y3)
	(y0) to (y1);
  \path [very thick, draw=color8, postaction={very thick, on each segment={mid arrow}}]
  	(x2) to (x3)
  	(x0) to (y1)
	(x1) to  (y0)
	(y1) to  (y2);
	
\end{tikzpicture}
\caption{The pair $M_{4}[i]$.}
\end{subfigure}
\begin{subfigure}[c]{0.32 \textwidth}
\begin{tikzpicture}[
  very thick,
  every node/.style={circle,draw=black,fill=black!90}, inner sep=2]
    every node/.style={circle,draw=black,fill=black!90}, inner sep=1.5, scale=0.75]
  \node (x0) at (0.0,3.0) [label=above:$0$] [label=left:$i$] {};
  \node (x1) at (1.0,3.0) [label=above:$1$]{};
  \node (x2) at (2.0,3.0)[label=above:$2$] {};
  \node (x3) at (3.0,3.0)[draw=gray, fill=gray, label=above:$0$] {};
  \node (y0) at (0.0,2.0) [label=left:$i+1$] {};
  \node (y1) at (1.0,2.0) {};
  \node (y2) at (2.0,2.0) {};
  \node (y3) at (3.0,2.0)[draw=gray, fill=gray] {};
  
  \path [very thick, draw=color2, postaction={very thick, on each segment={mid arrow}}]
	(x0) to (x1)
	(x1) to  (x2)
	(x2) to  (y3)
	(y0) to  (y1)
	(y1) to (y2);
  \path [very thick, draw=color8, postaction={very thick, on each segment={mid arrow}}]
  	(x2) to (x3)
  	(x3) to (y2)
	(y2) to  (y3)
	(x1) to  (y1);	

\end{tikzpicture}
\caption{The pair $M_{5}[i]$.}
\end{subfigure}
\begin{subfigure}[c]{0.32 \textwidth}
\begin{tikzpicture}[
  very thick,
  every node/.style={circle,draw=black,fill=black!90}, inner sep=2]
    every node/.style={circle,draw=black,fill=black!90}, inner sep=1.5, scale=0.75]
  \node (x0) at (0.0,3.0) [label=above:$0$] [label=left:$i$] {};
  \node (x1) at (1.0,3.0) [label=above:$1$]{};
  \node (x2) at (2.0,3.0)[label=above:$2$] {};
  \node (x3) at (3.0,3.0)[draw=gray, fill=gray, label=above:$0$] {};
  \node (y0) at (0.0,2.0) [label=left:$i+1$] {};
  \node (y1) at (1.0,2.0) {};
  \node (y2) at (2.0,2.0) {};
  \node (y3) at (3.0,2.0)[draw=gray, fill=gray] {};
  \path [very thick, draw=color2, postaction={very thick, on each segment={mid arrow}}]
	(x0) to (x1)
	(x1) to  (x2)
	(x2) to (y1)
	(y1) to (y2)
	(y2) to (y3);
  \path [very thick, draw=color8, postaction={very thick, on each segment={mid arrow}}]
  	(x2) to (x3)
  	(x0) to (y0)
	(y0) to  (y1)
	(x1) to  (y2);
	
\end{tikzpicture}
\caption{The pair $M_{6}[i]$.}
\end{subfigure}
\end{center}
\caption{All possible pairs of digraphs $(F_0[i], F_1[i])$, where $F_0[i]$ is drawn in grey and $F_1[i]$ in black, such that $F_1[i]$ contains the arc $(i_2, i_0)$.}
\label{fig:allfj.1}
\end{figure}
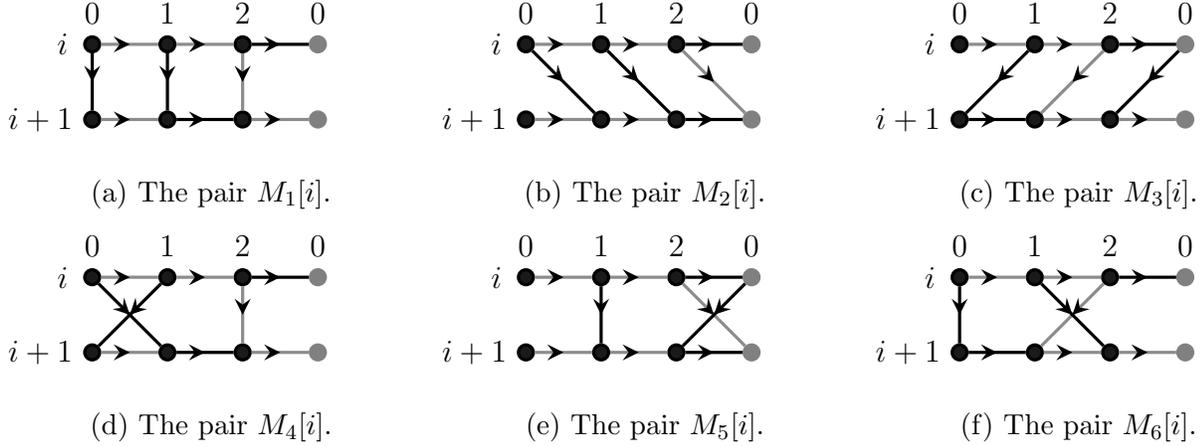

Observe that, if $(F_0[i], F_1[i])\in \{M_{\ell}[i]\ | \ \ell\equiv 1,2,3\ (\textrm{mod}\ 6)\}$, then the type-1 2-factor $F_1$ is switched at $i$, as defined in Definition \ref{defn:switch}. Otherwise, if $(F_0[i], F_1[i])\in \{M_{\ell}[i]\ | \ \ell\equiv 0,4,5\ (\textrm{mod}\ 6)\}$, then $F_1$ is not switched at $i$. Whether or not $F_1$ is switched as $i$ dictates the set of permutations that can be used to describe the corresponding type-0 2-factors $F_2$ and $F_3$, as stated in Lemma \ref{lem:sitch2} below. 

\begin{lemma}
\label{lem:sitch2}
Let $\mathcal{F}=(F_0, F_1, F_2, F_3)$ be a type-II directed 2-factorization of $\vec{C}_n\wr \vec{C}_3$. If $F_1$ is switched at $i$, then $F_2[i], F_3[i] \in \{(012), (021), id\}$. Otherwise, $F_2[i], F_3[i] \in \{(01), (02), (12)\}$. 
\end{lemma}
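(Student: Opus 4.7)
The plan is to convert the arc-disjointness conditions into a parity identity in $S_3$ and then read off the switched/unswitched dichotomy from that identity. I will first note that, because $F_2$ and $F_3$ are of type $0$, every arc of $F_2$ or $F_3$ with tail in $V_i$ is non-horizontal and has head in $V_{i+1}$. Hence each of $F_2[i]$ and $F_3[i]$ consists of exactly three non-horizontal arcs forming a perfect matching from $V_i$ to $V_{i+1}$ and so corresponds via $\{(i_a,(i+1)_{a^{\sigma_j}}) : a\in\mathds{Z}_3\}$ to a unique permutation $\sigma_j\in S_3$. A simple out-degree count (each vertex of $V_i$ has exactly one non-horizontal out-arc among $F_0[i]\cup F_1[i]$, because $F_0$ uses two horizontal arcs of $C_3^i$ and $F_1$ the remaining one) together with the analogous in-degree count in $V_{i+1}$ shows the three non-horizontal arcs of $F_0[i]\cup F_1[i]$ also form a matching, hence correspond to a permutation $\tau\in S_3$.

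Since the nine non-horizontal arcs from $V_i$ to $V_{i+1}$ in $\vec{C}_n\wr\vec{C}_3$ are decomposed by $\tau$, $\sigma_2$, and $\sigma_3$, arc-disjointness forces $\{a^\tau,a^{\sigma_2},a^{\sigma_3}\}=\mathds{Z}_3$ for every $a\in\mathds{Z}_3$. Equivalently, $\tau\sigma_2^{-1}$, $\tau\sigma_3^{-1}$, and $\sigma_2\sigma_3^{-1}$ are derangements of $\mathds{Z}_3$. The only derangements of $\mathds{Z}_3$ are the two $3$-cycles $(012)$ and $(021)$, both of which are even, so $\tau$, $\sigma_2$, and $\sigma_3$ share a common parity. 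Thus either $\{\tau,\sigma_2,\sigma_3\}\subseteq \{id,(012),(021)\}$ or $\{\tau,\sigma_2,\sigma_3\}\subseteq \{(01),(02),(12)\}$.

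The final step is to identify the parity of $\tau$ with the switched status of $F_1$ at $i$. Let $(i_c,i_{c+1})$ and $((i+1)_d,(i+1)_{d+1})$ denote the two horizontal arcs of $F_1[i]$. Then $F_0[i]$ misses exactly these two horizontal arcs, which forces its single non-horizontal arc to be $(i_c,(i+1)_{d+1})$, while the two non-horizontal arcs of $F_1[i]$ realize one of the two bijections from $\{i_{c+1},i_{c+2}\}$ to $\{(i+1)_d,(i+1)_{d+2}\}$. Tracing through $F_1[i]$, the choice $i_{c+1}\mapsto(i+1)_d$ gives dipaths of lengths $3$ and $1$, whereas $i_{c+1}\mapsto(i+1)_{d+2}$ gives two dipaths of length $2$; by Definition \ref{defn:switch}, the latter is exactly the switched case. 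In the switched case $\tau$ sends $c+k\mapsto d+1+k$ for each $k\in\mathds{Z}_3$, so $\tau$ is a translation of $\mathds{Z}_3$ and lies in $\{id,(012),(021)\}$; in the unswitched case $\tau$ sends $c\mapsto d+1$, $c+1\mapsto d$, and $c+2\mapsto d+2$, which swaps two elements and fixes a third, hence is one of $(01),(02),(12)$. Combining with the parity equality from the previous step proves the lemma.

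The main step that requires care is the parity computation of $\tau$ from the two horizontal arcs of $F_1[i]$; however, once the parameters $c$ and $d$ are fixed, it reduces to a direct comparison of two bijections on a three-element set, and the rest follows from the elementary derangement count in $S_3$.
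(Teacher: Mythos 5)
Your proof is correct, and it takes a genuinely different route from the paper. The paper's proof is a finite case check: it enumerates the $18$ possible pairs $(F_0[i],F_1[i])$ (six up to the rotation $\rho$, displayed in Figure \ref{fig:allfj.1}), computes the six possible sets of leftover arcs, and verifies by inspection that each such set partitions (in fact uniquely) into two matchings whose associated permutations lie in $\{id,(012),(021)\}$ in the switched cases and in $\{(01),(02),(12)\}$ otherwise. You replace this enumeration with a structural argument: the degree count showing that the three non-horizontal arcs of $F_0[i]\cup F_1[i]$ form a matching $\tau$ is sound (each vertex of $V_i$ has a unique horizontal out-arc, lying in exactly one of $F_0,F_1$, so it is the tail of exactly one non-horizontal arc of $F_0[i]\cup F_1[i]$, and dually for heads in $V_{i+1}$); the observation that the only derangements of $\mathds{Z}_3$ are the two $3$-cycles, hence even, correctly forces $\tau$, $F_2[i]$, and $F_3[i]$ to share a parity; and your explicit computation of $\tau$ from the parameters $c,d$ correctly identifies the switched case with $\tau$ being a translation (even) and the unswitched case with $\tau$ being a transposition (odd). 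Your approach buys a conceptual explanation of \emph{why} the dichotomy holds and scales without additional casework, while the paper's check additionally yields the uniqueness of the partition of the leftover arcs into $F_2[i]$ and $F_3[i]$ --- a stronger fact than the lemma requires, and one your argument does not (and need not) establish.
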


\begin{proof} For all possible pairs $F_0[i], F_1[i])$, there exist precisely six distinct sets of leftover arcs $L_t=A(L_i)-\left(A(F_0[i])\cup A(F_1[i]\right)$ where $t \in \{1,2, \ldots, 6\}$. This can be verified by using Figure \ref{fig:allfj.1}. 

Let $L_1$, $L_2$, and $L_3$ be the three sets of leftover arcs that correspond to a pair $(F_0[i], F_1[i])$ in which $F_1[i]$ is switched at $i$. Each of $L_1$, $L_2$, and $L_3$ admits exactly one partition into two sets of three vertex-disjoint arcs. Such a set of three disjoint arcs corresponds to a permutation in $\{(012), (021), id\}$. Consequently, $F_2[i], F_3[i] \in \{(012), (021), id\}$. 

Similarly, let $L_4$, $L_5$, and $L_6$ be the three sets of arcs that correspond to a pair $M_\ell=(F_0[i], F_1[i])$ in which $F_1[i]$ is not switched at $i$.  Each of $L_4$, $L_5$, and $L_6$ admits precisely one partition into two sets of three vertex-disjoint arcs. In this case, each such set of three arcs corresponds to a permutation in $\{(01), (02), (12)\}$. Hence, we see that $F_2[i], F_3[i] \in \{(01), (02), (12)\}$. \end{proof}

\begin{proposition}
\label{thm:typ2}
Let $n$ be an even integer. The digraph $\vec{C}_n \wr \vec{C}_3$ does not admit a type-II 2-factorization that is also a hamiltonian decomposition.   
\end{proposition}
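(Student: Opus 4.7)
The plan is to run a sign-parity argument on the two type-0 2-factors, combining Lemma~\ref{lem:switch} (to force an odd count of indices where $F_1$ is switched) with Lemma~\ref{lem:sitch2} (to translate that count into a parity obstruction on a product of permutations of $\mathds{Z}_3$).

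Suppose, for contradiction, that $\mathcal{F}=(F_0, F_1, F_2, F_3)$ is a type-II 2-factorization of $\vec{C}_n \wr \vec{C}_3$ in which every $F_j$ is a directed hamiltonian cycle, and set $S=\{i\in \mathds{Z}_n : F_1 \text{ is switched at } i\}$. Since $F_1$ is a type-1 directed hamiltonian cycle, Lemma~\ref{lem:switch} gives that $|S|=\sum_{i=0}^{n-1}s_i[F_1]$ is odd; as $n$ is even, $|\bar{S}|=n-|S|$ is also odd. For each $k\in\{2,3\}$, the type-0 2-factor $F_k$ contains no horizontal arcs, so by Notation~\ref{not:Sm} we may write $F_k=(\sigma_0^{(k)}, \ldots, \sigma_{n-1}^{(k)})$ with each $\sigma_i^{(k)} \in S_3$ the permutation encoding $F_k[i]$. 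Lemma~\ref{lem:sitch2} then yields $\sigma_i^{(k)}\in\{id,(012),(021)\}$ whenever $i\in S$ and $\sigma_i^{(k)}\in\{(01),(02),(12)\}$ whenever $i\in \bar{S}$.

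Because $F_k$ is a directed hamiltonian cycle iff $T(\sigma_0^{(k)}\sigma_1^{(k)}\cdots\sigma_{n-1}^{(k)})=1$, the product must be a $3$-cycle in $S_3$, and in particular an even permutation. However, taking signs gives
\[
\sgn\!\left(\sigma_0^{(k)}\sigma_1^{(k)}\cdots\sigma_{n-1}^{(k)}\right)=\prod_{i=0}^{n-1}\sgn\!\left(\sigma_i^{(k)}\right)=(-1)^{|\bar{S}|}=-1,
\]
since each factor indexed by $S$ lies in $A_3$ and each factor indexed by $\bar S$ is a transposition. This contradicts the evenness of a $3$-cycle, so $F_2$ (and by the identical argument $F_3$) cannot be hamiltonian. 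Hence no type-II 2-factorization of $\vec{C}_n \wr \vec{C}_3$ is also a hamiltonian decomposition.

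The only delicate point I foresee is checking that the permutation $F_k[i]$ produced by Lemma~\ref{lem:sitch2} is literally the permutation $\sigma_i^{(k)}$ of Notation~\ref{not:Sm}, so that the sign calculation is legitimate; this is essentially immediate from the definitions but is the one step I would verify carefully before relying on the computation above. Everything else reduces to the parity observation $|\bar S|\equiv 1 \pmod{2}$, so I do not expect further structural obstacles.
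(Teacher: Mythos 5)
Your proposal is correct and follows essentially the same route as the paper's proof: both invoke Lemma \ref{lem:switch} to get an odd number of switched indices for $F_1$, then use Lemma \ref{lem:sitch2} to conclude that the product of permutations describing each type-0 factor has an odd number of transposition factors (since $n$ is even), hence is an odd permutation and cannot be a $3$-cycle. Your explicit use of the sign homomorphism is just a cleaner packaging of the paper's count of odd versus even factors, and the identification of $F_k[i]$ with the permutation of Notation \ref{not:Sm} is exactly how the paper sets things up.
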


\begin{proof} Let $\mathcal{F}=(F_0, F_1, F_2, F_3)$ be a type-II 2-factorization. In this proof, we will describe $F_2$ and $F_3$ as the following product of $n$ permutations of $S_3$ where $F_2[i]=\sigma_j$ and $F_3[i]=\mu_i$: $\sigma=\sigma_0\sigma_1\cdots \sigma_{n-1}$, and $\mu=\mu_0\mu_1\cdots \mu_{n-1}$. If $F_2$ and $F_3$ are directed hamiltonian cycles, then $\sigma, \mu \in \{(012), (021)\}$. Observe that $(012)$ and $(021)$ are even permutations. 

By Lemma \ref{lem:sitch2}, if $F_1$ is switched at $i$, then $\sigma_i, \mu_i \in \{(012), (021), id\}$. This means that $\sigma_i$ and $\mu_i$ are even permutations. Recall that, by Lemma \ref{lem:switch}, if $F_1$  is a type-1 directed hamiltonian cycle, then $\sum_{j=0}^{n-1}s_i(F_1)$ must be odd. This means that the set $\{\sigma_i \ | \ i \in \mathds{Z}_n\}$ contains an odd number of even permutations and thus, an odd number of odd permutations. Therefore, the permutation $\sigma$ is odd; likewise for $\mu$. This is a contradiction, meaning that $\mathcal{F}$ cannot be a hamiltonian decomposition. \end{proof}

\section{Acknowledgements}
The author would like to thank Mateja \v{S}ajna and Karen Meagher for their support and feedback on this paper which greatly improved its presentation.  The author was supported by the Natural Sciences and Engineering Research Council of Canada (NSERC) Post Graduate Scholarship program and the Pacific Institute for the Mathematical Sciences (PIMS).

\pagebreak

\appendix

\section{Supplemental material for Section \ref{sec:contra}}
\label{appx}
\begin{table} [ht!]
\begin{center}
\begin{tabular}{|p{0.65cm} |p{4.3cm}|p{1.4cm}|p{1.4cm}| p{5.30cm}|} 
 \hline
&Element of $\mathcal{K}_i$ & $\sigma_{T_s}$ & $F_3[i]$ & Element of $\Gamma_{\mathcal{F}}$ \\ \hline
$T_1$&$(M_0^0[i], M_5^1[i], M_4^2[i] )$ & $(0\,1)(2)$& $id$ &$((1, 0, 1),((0\,1)(2), id))$ \\ \hline
$T_2$&$(M_3^0[i], M_3^1[i], M_3^2[i])$ & $(0\,1\,2)$& $(0\,2\,1)$ &$((0, 0, 0),((0\,1\,2), (0\,2\,1)))$ \\ \hline
$T_3$&$(M_2^0[i], M_4^1[i], M_0^2[i])$ &$(0\,2)(1)$& $(1\,2)(0)$ &$((1, 1, 1),((0\,2)(1), (1\,2)(0)))$ \\ \hline
$T_4$&$(M_1^0[i], M_1^1[i], M_1^2[i])$ &$id$& $(0\,1\,2)$ &$((0, 0, 0),(id, (0\,1\,2)))$ \\ \hline
$T_5$&$(M_1^0[i], M_0^1[i], M_2^2[i] )$ &$(1\,2)(0)$& $(0\,1\,2)$ &$((0, 1, 1),((1\,2)(0), (0\,1\,2)))$ \\ \hline
$T_6$&$(M_3^0[i], M_3^1[i], M_0^2[i])$ &$(0\,1\,2)$& $(1\,2)(0)$ &$((0, 0, 1),((0\,1\,2), (1\,2)(0)))$ \\ \hline
$T_7$&$(M_1^0[i], M_1^1[i], M_4^2[i] )$ & $id$&$(0\,2)(1)$ &$((0, 0, 1),(id, (0\,2)(1)))$ \\ \hline
$T_8$&$(M_5^0[i], M_5^1[i], M_5^2[i])$ & $(0\,2\,1)$&$id$ &$((0, 0, 0),((0\,2\,1), id))$ \\ \hline
$T_9$&$(M_0^0[i], M_2^1[i], M_4^2[i] )$ &$(0\,1)(2)$& $(0\,2)(1)$ &$((1, 1, 1),((0\,1)(2), (0\,2)(1)))$ \\ \hline
$T_{10}$&$(M_0^0[i], M_3^1[i], M_3^2[i])$ &$(0\,1\,2)$& $(0\,2)(1)$ &$((1, 0, 0),((0\,1\,2), (0\,2)(1)))$ \\ \hline
$T_{11}$&$(M_5^0[i], M_2^1[i], M_5^2[i])$ &$(0\,2\,1)$& $(0\,2)(1)$ &$((0, 1, 0),((0\,2\,1), (0\,2)(1)))$ \\ \hline
$T_{12}$&$(M_3^0[i], M_2^1[i], M_4^2[i] )$ &$(0\,1)(2)$& $(0\,2\,1)$ &$((0, 1, 1),((0\,1)(2), (0\,2\,1)))$ \\ \hline
$T_{13}$&$(M_4^0[i], M_0^1[i], M_2^2[i] )$ &$(1\,2)(0)$& $(0\,1)(2)$ &$((1, 1, 1),((1\,2)(0), (0\,1)(2)))$ \\ \hline
$T_{14}$&$(M_0^0[i], M_2^1[i], M_1^2[i])$ &$(0\,1)(2)$& $(0\,1\,2)$ &$((1, 1, 0),((0\,1)(2), (0\,1\,2)))$ \\ \hline
$T_{15}$&$(M_4^0[i], M_3^1[i], M_2^2[i] )$ &$(1\,2)(0)$& $(0\,2\,1)$ &$((1, 0, 1),((1\,2)(0), (0\,2\,1)))$ \\ \hline
$T_{16}$&$(M_2^0[i], M_5^1[i], M_5^2[i])$ &$(0\,2\,1)$& $(1\,2)(0)$ &$((1, 0, 0),((0\,2\,1), (1\,2)(0)))$ \\ \hline
$T_{17}$&$(M_4^0[i], M_1^1[i], M_1^2[i])$ &$id$& $(0\,1)(2)$ &$((1, 0, 0),(id, (0\,1)(2)))$ \\ \hline
$T_{18}$&$(M_1^0[i], M_4^1[i], M_1^2[i])$ & $id$&$(1\,2)(0)$ &$((0, 1, 0),(id, (1\,2)(0)))$ \\ \hline
$T_{19}$&$(M_2^0[i], M_1^1[i], M_0^2[i])$ & $(0\,2)(1)$&$(0\,1\,2)$ &$((1, 0, 1),((0\,2)(1), (0\,1\,2)))$ \\ \hline
$T_{20}$&$(M_4^0[i], M_0^1[i], M_5^2[i])$ &$(1\,2)(0)$& $id$ &$((1, 1, 0),((1\,2)(0), id))$ \\ \hline
$T_{21}$&$(M_3^0[i], M_0^1[i], M_3^2[i])$ & $(0\,1\,2)$&$(0\,1)(2)$ &$((0, 1, 0),((0\,1\,2), (0\,1)(2)))$ \\ \hline
$T_{22}$&$(M_5^0[i], M_4^1[i], M_0^2[i])$ & $(0\,2)(1)$&$id$ &$((0, 1, 1),((0\,2)(1), id))$ \\ \hline
$T_{23}$&$(M_5^0[i], M_5^1[i], M_2^2[i] )$ &$(0\,2\,1)$& $(0\,1)(2)$ &$((0, 0, 1),((0\,2\,1), (0\,1)(2)))$ \\ \hline
$T_{24}$&$(M_2^0[i], M_4^1[i], M_3^2[i])$ &$(0\,2)(1)$& $(0\,2\,1)$ &$((1, 1, 0),((0\,2)(1), (0\,2\,1)))$ \\ \hline
\end{tabular}
\end{center}
\caption{All triples in the set $\mathcal{K}_j$.}
\label{fig:configurations}
\end{table}

\end{document}